\renewcommand{\S}{\Sigma}
\newcommand{\dist}{{\rm dist}}
\newcommand{\loc}{{\rm loc}}
\newcommand{\spt}{{\rm spt}}
\newcommand{\weakstar}{\stackrel{*}{\rightharpoonup}}
\newcommand{\cl}{\mathrm{cl}\,}
\newcommand{\dive}{\mathrm{div}\,}
\newcommand\restr[2]{{% we make the whole thing an ordinary symbol
  \left.\kern-\nulldelimiterspace % automatically resize the bar with \right
  #1 % the function
 % \vphantom{\big|} % pretend it's a little taller at normal size
  \right|_{#2} % this is the delimiter
  }}
\newcommand{\mres}{\mathbin{\vrule height 1.6ex depth 0pt width %measure restriction
0.13ex\vrule height 0.13ex depth 0pt width 1.3ex}}
\theoremstyle{plain}% default
\newtheorem{theorem}{Theorem}[section]
\newtheorem{lemma}[theorem]{Lemma}
\newtheorem{corollary}[theorem]{Corollary}
\newtheorem*{theorem*}{Theorem}
\newtheorem*{corollary*}{Corollary}
\theoremstyle{definition}
\newtheorem{definition}[theorem]{Definition}
\newtheorem{remark}[theorem]{Remark}
\newtheorem*{notation*}{Notation}
\numberwithin{equation}{section}
\numberwithin{figure}{section}
\newcommand{\mm}{\mathbf{m}}
\renewcommand{\S}{\mathcal{S}}
\title[]{Regularity for Minimizers of a Planar Partitioning Problem with Cusps}
\author[M. Novack]{Michael Novack}
\address{Department of Mathematics, Louisiana State University, 303 Lockett Hall, Baton Rouge, LA 70803, United States of America}
\email{mnovack@lsu.edu}
\begin{document}

\begin{abstract}
We study the regularity of minimizers for a variant of the soap bubble cluster problem:
\begin{align*}
 \min \sum_{\ell=0}^N c_{\ell} P( S_\ell)\,, 
\end{align*}
where $c_\ell>0$, among partitions $\{S_0,\dots,S_N,G\}$ of $\mathbb{R}^2$ satisfying $|G|\leq \delta$ and an area constraint on each $S_\ell$ for $1\leq \ell \leq N$. If $\delta>0$, we prove that for any minimizer, each $\partial S_{\ell}$ is $C^{1,1}$ and consists of finitely many curves of constant curvature. Any such curve contained in $\partial S_{\ell} \cap \partial S_{m}$ or $\partial S_\ell \cap \partial G$ can only terminate at a point in $\partial G \cap \partial S_\ell \cap \partial S_{m}$ at which $G$ has a cusp. We also analyze a similar problem on the unit ball $B$ with a trace constraint instead of an area constraint and obtain analogous regularity up to $\partial B$. Finally, in the case of equal coefficients $c_\ell$, we completely characterize minimizers on the ball for small $\delta$: they are perturbations of minimizers for $\delta=0$ in which the triple junction singularities, including those possibly on $\partial B$, are ``wetted" by $G$. 
\end{abstract}

\maketitle

\setcounter{tocdepth}{2}

\section{Introduction}
\subsection{Overview} A classical problem in the calculus of variations is the soap bubble cluster problem, which entails finding the configuration, or cluster, of least area separating $N$ regions with prescribed volumes, known as chambers. Various generalizations have been studied extensively as well and may involve different coefficients penalizing the interfaces between pairs of regions (the immiscible fluids problem) or anisotropic energies. The existence of minimal clusters and almost everywhere regularity for a wide class of problems of this type were obtained by Almgren in the foundational work \cite{Alm76}. The types of singularities present in minimizers in the physical dimensions are described by Plateau's laws, which were verified in $\mathbb{R}^3$ by Taylor \cite{Taylor}. In the plane, regions in a minimizing cluster are bounded by finitely many arcs of constant curvature meeting at $120^{\circ}$ angles \cite{Morgan94}. We refer to the book \cite{MorganBook} for further discussion on the literature for soap bubble clusters.
\par
In this article we study the interaction of the regularity/singularities of 2D soap bubbles with other physical properties such as thickness. Soap bubbles are generally modeled as surfaces, or ``dry" soap bubbles. This framework is quite natural for certain questions, e.g. singularity analysis as observed above, but it does not capture features related to thickness or volume of the soap. %, whereas physical soap films consist of a thin region of liquid separated on either side from the air by a pair of interfaces.
Issues such as which other types of singularities can be stabilized by ``wetting" the film \cite{Hutzler,WeairePhelan} require the addition of a small volume parameter to the model corresponding to the enclosed liquid; see for example \cite{BrakkeMorgan,Brakke05}. In the context of least-area surfaces with fixed boundary (Plateau problem), the authors in \cite{MSS,KinMagStu22,KMS2,KMS3} have formulated a soap film capillarity model that selects surface tension energy minimizers enclosing a small volume and spanning a given wire frame. The analysis of minimizers is challenging, for example due to the higher multiplicity surfaces that arise if the thin film ``collapses." 
\par
Here we approach these issues through the regularity analysis of minimizers of a version of the planar minimal cluster problem. In the model, there are $N$ chambers of fixed area (the soap bubbles) and an exterior chamber whose perimeters are penalized, and there is also an un-penalized region $G$ of small area at most $\delta>0$. This region may be thought of as the ``wet" part of the soap film where soap accumulates (see Remarks \ref{interpretation remark}-\ref{remark on soft constraint} and \ref{wetting remark}). Our first main result, Theorem \ref{main regularity theorem delta positive all of space}, is a sharp regularity result for minimizers: each of the $N$ chambers as well as the exterior chamber have $C^{1,1}$ boundary, while $\partial G$ is regular away from finitely many cusps. In particular, each bubble is regular despite the fact that the bubbles in the $\delta\to 0$ limit may exhibit singularities. We also study a related problem on the ball in which the area constraints on the chambers are replaced by boundary conditions on the circle and prove a similar theorem up to the boundary (Theorem \ref{main regularity theorem delta positive}). As a consequence, in Theorem \ref{resolution for small delta corollary}, we completely resolve minimizers on the ball for small $\delta$ in terms of minimizers for the limiting ``dry" problem: near each triple junction singularity of the limiting minimizer, there is a component of $G$ ``wetting" the 
 singularity and bounded by three circular arcs meeting in cusps inside the ball and corners or cusps at the boundary; see Figure \ref{perturbation figure}.

\begin{figure}
\begin{overpic}[scale=0.4,unit=1mm]{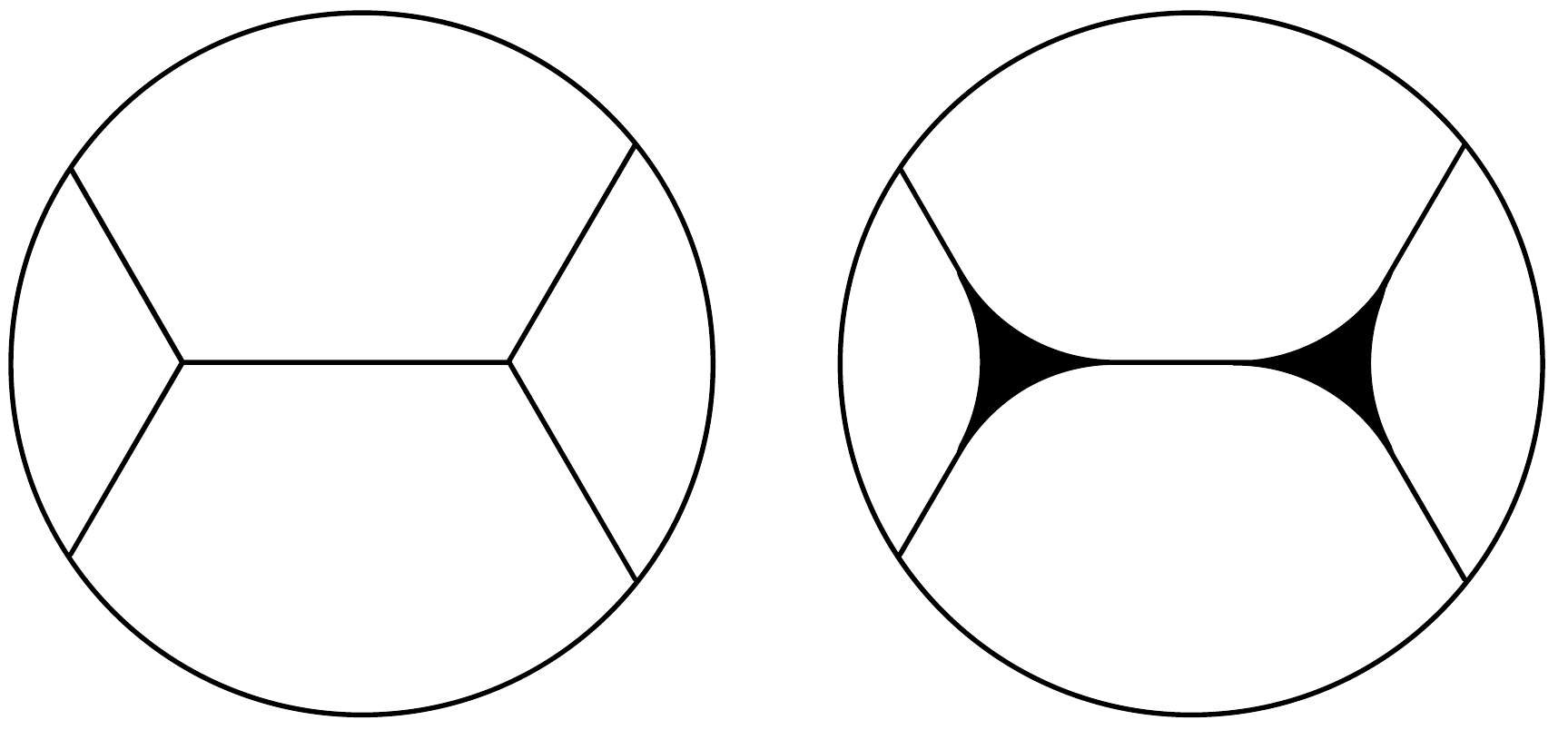}
    \put(-12,21){\small{$S_0^0$}}
    \put(1.5,21){\small{$S_1^0$}}
    \put(54.5,21){\small{$S_1^\delta$}}
    %\put(70.5,14){\small{$\tilde{S}_1^\delta$}}
    \put(20,40){\small{$S_2^0$}}
    \put(73,40){\small{$S_2^\delta$}}
    %\put(81.5,25){\small{$\tilde{S}_2^\delta$}}
    \put(20,3){\small{$S_3^0$}}
    \put(73,3){\small{$S_3^\delta$}}
    %\put(82,3){\small{$\tilde{S}_3^\delta$}}
    \put(38.5,21){\small{$S_4^0$}}
    \put(92,21){\small{$S_4^\delta$}}
    \put(103.5,21){\small{$S_0^\delta$}}
    \put(65.5,25){\small{$G^\delta$}}
    \put(79,25){\small{$G^\delta$}}
    %\put(94.5,14){\small{$\tilde{S}_4^\delta$}}
    %\put(42,17){\small{$G^\delta$}}
    %\put(88,17){\small{$\tilde{G}^\delta$}}
    %\put(77,17){\small{$\tilde{G}^\delta$}}
\end{overpic}
\caption{On the left is a minimizing cluster $\S^0$ for the $\delta=0$ problem on the ball with chambers $S_\ell^0$. On the right is a minimizer $\S^\delta$ for small $\delta$, with $|G^\delta|=\delta$. Near the triple junctions of $\S^0$, $\partial G^\delta$ consists of three circular arcs meeting in cusps; see Theorem \ref{resolution for small delta corollary}. }\label{perturbation figure}
\end{figure}

\subsection{Statement of the problem} For an $(N+2)$-tuple $\mathcal{S}=(S_0,S_1,\dots, S_N, G)$ of disjoint sets of finite perimeter partitioning $\mathbb{R}^2$ ($N\geq 2$), called a cluster, we study minimizers of the energy
\begin{align}\notag
    \mathcal{F}(\S):= \sum_{\ell=0}^N c_{\ell} P( S_\ell)\,,\qquad c_\ell>0\quad \forall 0 \leq \ell \leq N\,,
\end{align}
among two admissible classes. First, we consider the problem on all of space
\begin{align}\label{all of space problem}
    \inf_{\S \in \mathcal{A}_\delta^{\mm}} \mathcal{F}(\S)\,,
\end{align}
where the admissible class $\mathcal{A}_\delta^{\mm}$ consists of all clusters satisfying 
\begin{align}\label{lebesgue constraint}
    |G|=|\mathbb{R}^2 \setminus \cup_{\ell=0}^N S_\ell| \leq \delta
\end{align}
and, for some fixed $\mm\in (0,\infty)^N$, $(|S_1|,\dots,|S_N|)=\mm$. We also consider a related problem on the unit ball $B=\{(x,y):x^2+y^2<1\}$. We study the minimizers of
\begin{align}\label{ball problem}
    \inf_{\S \in \mathcal{A}_\delta^h} \mathcal{F}(\S)\,,
\end{align}
where $\mathcal{A}_\delta^h$ consists of all clusters such that, for fixed $h\in BV(\partial B;\{1,\dots,N\})$,
\begin{align}\label{trace constraint}
    S_\ell \cap \partial B = \{x\in \partial B : h(x) = \ell \}\textup{ for $1\leq \ell \leq N$ in the sense of traces}\,,
\end{align}
$S_0=\mathbb{R}^2\setminus B$ is the exterior chamber, and $G$ satisfies \eqref{lebesgue constraint}. We remark that since $\mathcal{A}_\delta^{\mm}\subset \mathcal{A}_{\delta'}^{\mm}$ and $\mathcal{A}_\delta^h \subset \mathcal{A}_{\delta'}^h$ if $\delta<\delta'$, the minimum energy decreases in $\delta$ for both \eqref{all of space problem} and \eqref{ball problem}.
\par
The main energetic mechanism at work in \eqref{all of space problem} that distinguishes it from the classical minimal cluster problem is that the set $G$ prohibits the creation of corners in the chambers $S_\ell$. If $r\ll 1$, the amount of perimeter saved by smoothing out a corner of $S_\ell$ in $B_r(x)$ using the set $G$ scales like $r$, and this can be accomplished while simultaneously preserving the area constraint by fixing areas elsewhere with cost $\approx r^2$ \cite[VI.10-12]{Alm76}. On the other hand, the regularizing effect of $G$ only extends to the other chambers and not to its own boundary since its perimeter is not penalized.
%Associated to a triple $(S_1, S_2, S_3)\in \mathcal{A}_\delta^h$, we define the set 
%\begin{align}\label{remnant definition}
%    G:= B_1(0) \setminus (S_1 \cup S_2 \cup S_2)\,.
%\end{align}

%Let $B \subset \mathbb{R}^2$ be the open ball of radius $1$ centered at $0$. Fix a number $\delta\geq 0$. Fix a function $h\in BV(\partial B;\{1,2,3\})$. Fix three positive constants $c_1$, $c_2$, and $c_3$. 
%For pairwise disjoint subsets $S_1$, $S_2$, $S_3$ of sets of finite perimeter contained in $B$, we define the energy
%\begin{align}\notag
%    \mathcal{F}(S_1,S_2,S_3):=\sum_{\ell=1}^3 c_{\ell} \mathcal{H}^1(\partial^* S_\ell \cap B)= \sum_{\ell=1}^3 c_\ell P(S_\ell; B)\,.
%\end{align}
%We consider the minimization problem
%\begin{align}\notag
%    \inf_{(S_1,S_2,S_3) \in \mathcal{A_\delta}} \mathcal{F}(S_1,S_2,S_3)\,,
%\end{align}

%Associated to a triple $(S_1, S_2, S_3)\in \mathcal{A}_\delta^h$, we define the set 
%\begin{align}\label{remnant definition}
%    G:= B \setminus (S_1 \cup S_2 \cup S_2)\,.
%\end{align}

\subsection{Main results} We obtain optimal regularity results for minimizers of \eqref{all of space problem} and \eqref{ball problem}. In addition, for the problem with equal weights $c_\ell$, we completely resolve minimizers of \eqref{ball problem} for small $\delta>0$ in terms of minimizers for $\delta=0$. In the following theorems and throughout the paper, the term ``arc of constant curvature" may refer to either a single circle arc or a straight line segment.

\begin{theorem}[Regularity on $\mathbb{R}^2$ for $\delta>0$]\label{main regularity theorem delta positive all of space}
    If $\S^\delta$ is a minimizer for $\mathcal{F}$ among $\mathcal{A}_\delta^{\mm}$ for $\delta>0$, then $\partial S_\ell^\delta$ is $C^{1,1}$ for each $\ell$, and  there exists $\kappa^\delta_{\ell m}$ such that each $\partial S_\ell^\delta \cap \partial S_m^\delta$ is a finite union of arcs of constant curvature $\kappa_{\ell m}^\delta$ that can only terminate at a point in $\partial S_\ell^\delta \cap \partial S_m^\delta \cap \partial G^\delta$. Referring to those points in $\partial S_\ell^\delta \cap \partial S_m^\delta \cap \partial G^\delta$ as cusp points, there exist $\kappa_\ell^\delta$ for $0\leq \ell \leq N$ such that
%\begin{align}\label{curvature condition all of space}
%    c_0 \kappa_0^\delta=c_1 \kappa_1^\delta=\cdots =c_N \kappa_N^\delta 
%\end{align}
$\partial S_\ell^\delta \cap \partial G^\delta$ is a finite union of arcs of constant curvature $\kappa_\ell^\delta$, each of which can only terminate at a cusp point where $\partial S_\ell^\delta\cap \partial G^\delta $ and $\partial S_m^\delta\cap \partial G^\delta $ meet a component of $\partial S_\ell^\delta \cap \partial S_m^\delta$ tangentially.
\end{theorem}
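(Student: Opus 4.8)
The plan is to establish regularity through a bootstrapping scheme that first exploits almost-minimality to get $C^{1,\alpha}$ estimates on the interfaces, then uses the Euler--Lagrange equations to upgrade to constant curvature and $C^{1,1}$, and finally analyzes the structure of the contact set $\partial S_\ell^\delta \cap \partial G^\delta$ near the boundary of the regular parts.

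\medskip
\noindent \textbf{Step 1: Almost-minimality and initial $C^{1,\alpha}$ regularity.} First I would show that a minimizer $\S^\delta$ is a $(\Lambda, r_0)$-minimizer of the perimeter functional $\mathcal{F}$ away from $\partial G^\delta$, in the following sense: because $G^\delta$ has the freedom to absorb small area adjustments (it is un-penalized and only constrained by $|G^\delta| \leq \delta$, which for a minimizer we expect to be active, $|G^\delta|=\delta$), one can perform competitor constructions that fix the areas $\mm$ with an $O(r^2)$ penalty as described in the paragraph after \eqref{ball problem}. Consequently, on any ball $B_r(x)$ with $\bar B_r(x) \cap \partial G^\delta = \emptyset$, the cluster $(S_0^\delta,\dots,S_N^\delta)$ is a $\Lambda$-minimizing cluster in the classical sense of Almgren, and the interior regularity theory for minimal clusters in the plane \cite{Morgan94, MorganBook} applies: $\partial S_\ell^\delta \setminus \partial G^\delta$ is locally a finite union of $C^{1,\alpha}$ arcs meeting only at isolated triple points at $120^\circ$. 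However -- and this is the key difference from the classical setting -- I claim such triple points \emph{cannot occur away from $\partial G^\delta$}. The reason is the regularizing mechanism: at a putative triple junction of $S_\ell^\delta, S_m^\delta, S_k^\delta$ located at positive distance from $\partial G^\delta$, one could insert a small component of $G^\delta$ of area $\approx r^2$ near the junction, smoothing the $120^\circ$ corners of all three chambers and saving perimeter of order $r$, while restoring the areas $\mm$ elsewhere at cost $O(r^2)$. For $r$ small this strictly decreases $\mathcal{F}$, contradicting minimality. Hence each $\partial S_\ell^\delta$ is, away from $\partial G^\delta$, a $C^{1,\alpha}$ manifold with no junctions, and similarly $\partial S_\ell^\delta \cap \partial S_m^\delta$ is a union of $C^{1,\alpha}$ arcs whose endpoints lie in $\partial G^\delta$.

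\medskip
\noindent \textbf{Step 2: Euler--Lagrange equations, constant curvature, and $C^{1,1}$.} On the $C^{1,\alpha}$ interfaces, I would write the first variation of $\mathcal{F}$ under normal perturbations. Variations supported away from $\partial G^\delta$ that preserve all areas $\mm$ give that $\partial S_\ell^\delta \cap \partial S_m^\delta$ has constant (distributional, hence by elliptic regularity classical) curvature $\kappa_{\ell m}^\delta$; standard arguments identify $\kappa_{\ell m}^\delta$ with a difference of Lagrange multipliers $\lambda_\ell - \lambda_m$ coming from the area constraints, weighted by the coefficients $c_\ell$. For the interfaces $\partial S_\ell^\delta \cap \partial G^\delta$: here only $P(S_\ell^\delta)$ is penalized (the $G$-side contributes nothing), the area of $G^\delta$ is only constrained by an inequality, and the area of $S_\ell^\delta$ enters through its multiplier, so the first variation yields that $\partial S_\ell^\delta \cap \partial G^\delta$ has constant curvature $\kappa_\ell^\delta$ as well (proportional to $\lambda_\ell/c_\ell$ plus a multiplier for the $|G^\delta| \leq \delta$ constraint, which is active). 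Since distinct constant-curvature arcs of $\partial S_\ell^\delta$ fit together along $\partial S_\ell^\delta$ — the arcs of type $\partial S_\ell^\delta \cap \partial S_m^\delta$ (various $m$) and $\partial S_\ell^\delta \cap \partial G^\delta$ — and the whole of $\partial S_\ell^\delta$ is $C^{1}$ by Step 1, the curvature is bounded and $\partial S_\ell^\delta \in C^{1,1}$. Finiteness of the number of arcs comes from a density/monotonicity estimate together with the area constraint, exactly as in the classical planar cluster theory.

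\medskip
\noindent \textbf{Step 3: Structure at the cusp points.} The remaining and most delicate point is to analyze the endpoints of the arcs, i.e. the points of $\partial S_\ell^\delta \cap \partial S_m^\delta \cap \partial G^\delta$, and to show that at such a point the three relevant arcs (the $S_\ell$–$S_m$ interface and the two $G$-interfaces $\partial S_\ell^\delta \cap \partial G^\delta$, $\partial S_m^\delta \cap \partial G^\delta$) meet \emph{tangentially}, so that $G^\delta$ has a cusp there. Here I would perform a blow-up analysis at such a point $x_0$. The blow-up limit is a minimizing cone for the limiting (unpenalized-$G$, no area constraint) energy — a cone in the plane formed by the regions $S_\ell, S_m, G$. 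The classical $120^\circ$ triple junction is \emph{not} admissible as a blow-up limit, because in the blow-up the coefficient on $P(G)$ is effectively zero while the perimeters of $S_\ell, S_m$ are penalized; minimizing $c_\ell P(S_\ell) + c_m P(S_m)$ over cones with the $S_\ell$–$S_m$ interface entering, with $G$ free to fill the rest, forces the $S_\ell$ and $S_m$ sectors to collapse against each other, i.e. the two sides of the $S_\ell$–$S_m$ interface to be tangent, which in turn forces the two $G$-interfaces to be tangent to it as well — precisely a cusp of $G$. (If $c_\ell = c_m$ the limiting cone is a single line with $G$ on neither side near $x_0$, degenerating to no cusp, which is consistent with the curves simply not terminating; the genuine cusps occur where a curve of $\partial S_\ell^\delta \cap \partial S_m^\delta$ does terminate.) The main obstacle I anticipate is making this blow-up rigorous: one must establish that the cluster is a genuine almost-minimizer \emph{including near $\partial G^\delta$} for an appropriate functional (with the unpenalized $G$), prove compactness of blow-ups and upper semicontinuity so that blow-up limits are minimizing cones, classify those cones in the plane, and then transfer the tangency of the limit back to a genuine tangential-contact (cusp) statement for $\partial S_\ell^\delta$ at finite scales — likely via a decay-of-excess / $\epsilon$-regularity argument that also yields that the cusp points are isolated, hence finite in number given the already-established finiteness of arcs. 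A subtlety is ruling out other degenerate blow-up profiles (e.g. $G$ pinching off, or more than three chambers limiting to $x_0$), which I would handle by a combination of the density lower bound for each nonempty chamber and the fact that an interior $120^\circ$ junction of three \emph{penalized} chambers has already been excluded in Step 1, so at most two penalized chambers can meet at a point of $\partial G^\delta$.
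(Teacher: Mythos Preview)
Your overall architecture (almost-minimality, blow-up classification, local resolution) aligns with the paper, and Step~1's exclusion of triple junctions is correct in spirit---the paper carries it out at the blow-up level (Theorem~\ref{positive delta classification theorem}) rather than at finite scale, but the mechanism is the same triangle-inequality competitor. The genuine gap is in Step~3. At a point $x_0 \in \partial S_\ell^\delta \cap \partial S_m^\delta \cap \partial G^\delta$, both $S_\ell^\delta$ and $S_m^\delta$ have positive lower density (Corollary~\ref{density corollary}), so the blow-up classification forces case~(ii) of Theorem~\ref{positive delta classification theorem}: the cone is two complementary halfspaces with $G=\emptyset$. There are no $G$-interfaces in the limit from which to read off tangency, and the blow-up does not distinguish a cusp point from an ordinary interior point of $\partial S_\ell^\delta \cap \partial S_m^\delta$. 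The paper explicitly flags this as the main obstacle; your description of a blow-up cone ``formed by the regions $S_\ell, S_m, G$'' with $G$-interfaces forced tangent is not what actually occurs, and your $\varepsilon$-regularity/decay-of-excess sketch does not explain how to recover the finite-scale cusp structure from a blow-up that carries no information about $G$.

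The paper's replacement for decay-of-excess is a symmetrization argument (Lemma~\ref{symmetrization lemma}). Using the halfspace blow-up together with slicing, one locates a small rectangle $Q'$ on whose vertical sides $\partial S_\ell^\delta$ and $\partial S_m^\delta$ each cross exactly once. One then tests minimality of $\S^\delta$ against the competitor obtained by replacing $S_\ell^\delta \cap Q'$ with its ``hypograph'' rearrangement and $S_m^\delta \cap Q'$ with its ``epigraph'' rearrangement; this preserves areas and traces, hence is admissible. The equality case of the symmetrization inequality then forces every vertical slice of each set to be an interval, i.e.\ both boundaries are ordered graphs $f_\ell \le f_m$ over a common axis. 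Only with this graphicality in hand does the analysis of the contact set $\mathcal{I}=\{t: f_\ell(t)=f_m(t)\}$---shown to be a closed interval---yield the four local pictures of Theorem~\ref{interior resolution theorem delta positive}, including the tangential meeting at cusp endpoints. Obtaining simultaneous graphicality of the two free boundaries is the missing ingredient in your outline, and it is not supplied by a standard $\varepsilon$-regularity theorem.
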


%\begin{remark}[Non-emptiness of $G^\delta$]
%    Whenever there exists a minimizer for \eqref{all of space problem} with $\delta=0$ with a singularity, $G^\delta$ is automatically non-empty for $\delta>0$. This is justified by the observation below \eqref{trace constraint}, because any corner can be replaced by a small region of $G$ resulting in less total energy.
%\end{remark}

\begin{remark}[Interpretation of $G^\delta$]\label{interpretation remark}
For the case $c_\ell =1$, a possible reformulation of \eqref{all of space problem} that views the interfaces as thin regions of liquid rather than surfaces is
\begin{align}\label{thin film cluster formulation}
    \inf\{ \mathcal{F}(\S) : \S\in \mathcal{A}_\delta^{\mm},\, \mbox{$S_\ell$ open $\forall \ell$, $\cl S_\ell \cap \cl S_m=\emptyset$ $\forall \ell \neq m$} \}\,.
\end{align}
This is because if $\S$ belongs to this class, then each bubble $S_\ell$ for $1\leq \ell \leq N$ must be separated from the others and the exterior chamber $S_0$ by the soap $G$, and $\mathcal{F}(\S) = P(G)$, which is the energy of the soap coming from surface tension. Theorem \ref{main regularity theorem delta positive all of space} allows for a straightforward construction showing that in fact, \eqref{all of space problem} and \eqref{thin film cluster formulation} are equivalent, in that a minimizer for \eqref{all of space problem} can be approximated in energy by clusters in the smaller class \eqref{thin film cluster formulation}. Therefore, for a minimizer $\S^\delta$ of \eqref{all of space problem}, $G^\delta$ can be understood as the ``wet" part of the interfaces between bubbles where soap accumulates in the limit of a minimizing sequence for \eqref{thin film cluster formulation}, as opposed to $\partial S_\ell^\delta \cap \partial S_m^\delta$ which is the ``dry" part; see Figure \ref{double bubble}. 
\end{remark}

\begin{remark}[Constraint on $G^\delta$]\label{remark on soft constraint}
    We have incorporated $G^\delta$ with a soft constraint $|G^\delta|\leq \delta$ rather than a hard constraint $|G^\delta|=\delta$ to allow the minimizers to ``select" the area of $G^\delta$. A consequence of Theorem \ref{main regularity theorem delta positive all of space} is that if some minimizer $\S^0$ of \eqref{all of space problem} for $\delta=0$ has a singularity, then every minimizer $\S^\delta$ for given $\delta>0$ satisfies $|G^\delta|>0$. Indeed, if $|G^\delta|=0$, then $\mathcal{F}(\S^0)\leq \mathcal{F}(\S^\delta)=\inf_{\mathcal{A}_\delta^{\mm}}\mathcal{F}$, so that $\S^0$ is minimal among $\mathcal{A}_\delta^{\mm}$ and the regularity in Theorem \ref{main regularity theorem delta positive all of space} for $\S^0$ yields a contradiction. As we prove in Theorem \ref{resolution for small delta corollary}, the minimizer on the ball for small $\delta$ and equal coefficients saturates the inequality $|G^\delta|\leq \delta$, and we suspect this should hold in generality for \eqref{all of space problem} and \eqref{ball problem} with small $\delta$.
\end{remark}

\begin{figure}
\begin{overpic}[scale=0.4,unit=1mm]{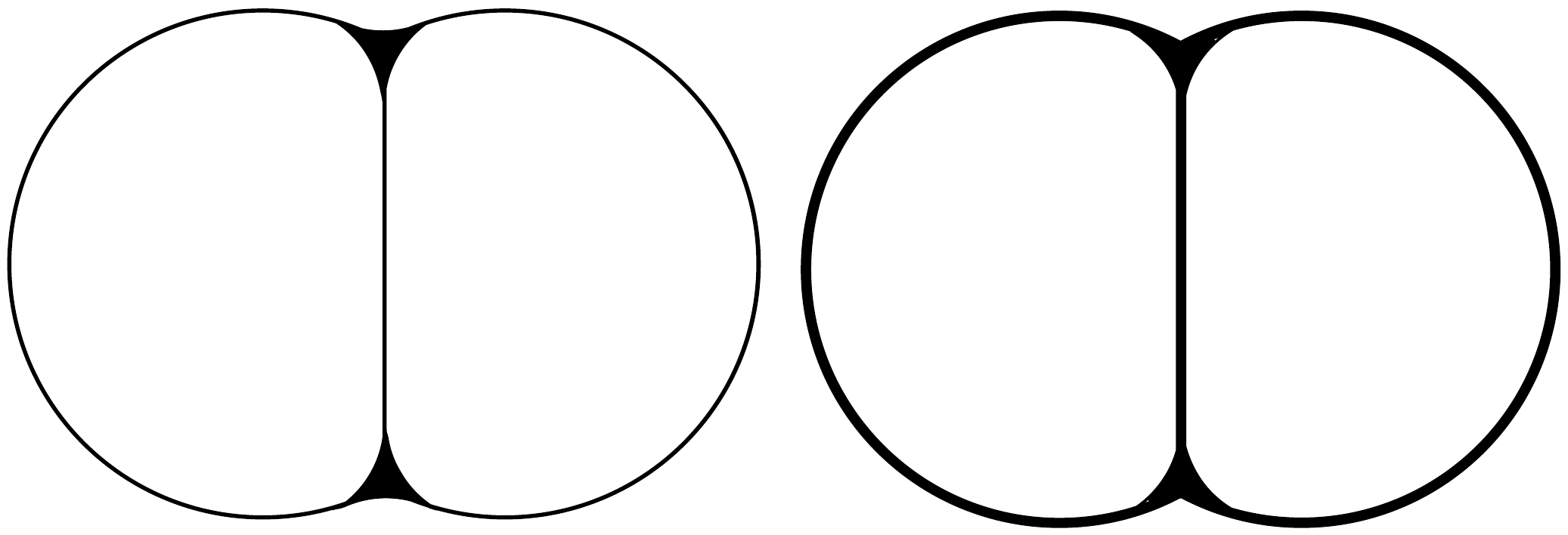}
    \put(10,15){\small{$S_1^\delta$}}
    \put(35,15){\small{$S_2^\delta$}}
    \put(60,15){\small{$\tilde{S}_1^\delta$}}
    \put(85,15){\small{$\tilde{S}_2^\delta$}}
    \put(-12,18){\small{$S_0^\delta$}}
    \put(107,18){\small{$\tilde{S}_0^\delta$}}
    \put(18,27.5){\small{$G^\delta$}}
    \put(25,5){\small{$G^\delta$}}
    \put(54,23){\small{$\tilde{G}^\delta$}}
\end{overpic}
\caption{On the left is a double 
 bubble-type configuration $\S^\delta$ with singularities wetted by $G^\delta$. $\S^\delta$ can be approximated by $\tilde{\S}^\delta$, where $\tilde{G}^\delta$ wets the entire interface.}\label{double bubble}
\end{figure}

%\begin{remark}[Wetting of Singularities]
 %   By utilizing a uniform in $\delta$ diameter bound for minimizers of \eqref{all of space problem} (Theorem \ref{existence on all of space}), given a sequence of minimizers $\S^{\delta_j}$ with $\delta_j \to 0$, there exists a minimizer $\S$ for $\delta=0$ such that $S_\ell^{\delta_j}\overset{L^1}{\to} S_\ell$ for $1\leq \ell \leq N$. If $\S$ has singularities when multiple chambers meet at a point $x$, there must be $r>0$ such that $G^{\delta_j} \cap B_r(x)\neq \emptyset$ for all large $j$, since 
%\end{remark}

We turn now to our results regarding the problem \eqref{ball problem} on the ball. Here regularity holds up to the boundary $\partial B$, at which $G^\delta$ may have corners, rather than cusps, at jump points of $h$. 

\begin{theorem}[Regularity on the Ball for $\delta>0$]\label{main regularity theorem delta positive}
    If $\S^\delta$ is a minimizer for $\mathcal{F}$ among $\mathcal{A}_\delta^h$ for $\delta>0$, then for $\ell,m>0$, $\partial S_\ell^\delta$ is $C^{1,1}$ except at jump points of $h$, and $\partial S_\ell^\delta \cap \partial S_m^\delta \cap B$ is a finite union of line segments terminating on $\partial B$ at a jump point of $h$ between $\ell$ and $m$ or at a point in $\partial S_\ell^\delta \cap \partial S_m^\delta \cap \partial G^\delta \cap  B$. Referring to those points in $\partial S_\ell^\delta \cap \partial S_m^\delta \cap \partial G^\delta \cap  B$ and $\partial S_\ell^\delta \cap \partial S_m^\delta \cap \partial G^\delta \cap  \partial B$ as cusp and corner points, respectively, there exist $\kappa_\ell^\delta$ for $1\leq \ell\leq N$ such that
\begin{align}\label{curvature condition}
    c_1 \kappa_1^\delta=c_2 \kappa_2^\delta=\cdots =c_N \kappa_N^\delta 
\end{align}
and $\partial S_\ell^\delta \cap \partial G^\delta$ consists of a finite union of arcs of constant curvature $\kappa_\ell^\delta$, each of whose two endpoints are either a cusp point in $B$ or a corner point in $\partial B$ at a jump point of $h$. Furthermore, at cusp points, $\partial S_\ell^\delta\cap \partial G^\delta $ and $\partial S_m^\delta\cap \partial G^\delta $ meet a segment of $\partial S_\ell^\delta \cap \partial S_m^\delta$ tangentially. Finally, any connected component of $S_\ell^\delta$ for $1\leq \ell \leq N$ is convex.
\end{theorem}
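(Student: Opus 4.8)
The plan is to run the regularity machinery behind Theorem~\ref{main regularity theorem delta positive all of space}, adapted to the trace constraint \eqref{trace constraint} and to the single volume constraint. First, Almgren's penalization: since on the ball the chambers carry no area constraint, for $\Lambda$ large a minimizer $\S^\delta$ of $\mathcal{F}$ in $\mathcal{A}_\delta^h$ is a $(\Lambda,r_0)$-minimizer of the weighted perimeter $\S\mapsto\sum_\ell c_\ell P(S_\ell)$ among clusters with $S_0=\mathbb{R}^2\setminus B$ obeying \eqref{trace constraint}; equivalently $(S_1^\delta,\dots,S_N^\delta,G^\delta)$ is a $(\Lambda,r_0)$-minimizing partition of $B$ with fixed datum $h$, with weights $c_\ell+c_m$ on $\partial S_\ell\cap\partial S_m$ and $c_\ell$ on $\partial S_\ell\cap\partial G$ (the boundary of $G$ being unpenalized). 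This weight system satisfies the triangle inequality, with the triangles $(c_\ell+c_m,c_\ell,c_m)$ attached to $(S_\ell,S_m,G)$ degenerate. Standard $(\Lambda,r_0)$-theory then yields density estimates and $C^{1,\alpha}$ reduced boundaries, and the classification of planar weighted blow-up cones shows each point of $\bigcup_\ell\partial S_\ell^\delta\cap B$ is regular, a triple point of three chambers, or a triple point of two chambers and $G^\delta$; the analogous analysis up to $\partial B$ --- using that $P(S_0^\delta)=P(B)$ is constant, so the energy localizes near $\partial B$ to a weighted partition of a half-disc with datum $h$ --- gives boundary blow-ups that are minimizing cones in a half-plane.

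Next I read off the interface equations. Moving an interior interface $\partial S_\ell^\delta\cap\partial S_m^\delta$ transfers area only between $S_\ell^\delta$ and $S_m^\delta$, changing neither $|G^\delta|$ nor any constraint, so its weighted curvature vanishes and it is a finite union of line segments (finiteness from density estimates and analyticity). Moving $\partial S_\ell^\delta\cap\partial G^\delta$ transfers area between $S_\ell^\delta$ and $G^\delta$; with $\lambda\ge0$ the multiplier of $|G^\delta|\le\delta$ --- nonnegative because $\delta\mapsto\inf_{\mathcal{A}_\delta^h}\mathcal{F}$ is nonincreasing, as recorded in the excerpt --- stationarity gives $c_\ell\kappa_\ell^\delta=\lambda$ for all $\ell\ge1$, which is \eqref{curvature condition}, and shows $\partial S_\ell^\delta\cap\partial G^\delta$ is a finite union of circular arcs of curvature of magnitude $\lambda/c_\ell$, oriented with $S_\ell^\delta$ on the convex side.

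The structural core is that $G^\delta$ wets every junction, so no genuine triple point of three chambers survives in $B$: near a putative such point one inserts a small $G^\delta$-component of diameter comparable to $r$, which shortens the three incident interfaces and replaces the three-chamber corner by interfaces of the strictly smaller weights $c_\ell,c_m,c_k$, saving $O(r)$ of energy for $O(r^2)$ of $G^\delta$-volume (reallocated at first-order cost $O(\lambda r^2)$ if $|G^\delta|=\delta$), contradicting minimality for small $r$. Hence every interior singularity lies on some $\partial S_\ell^\delta\cap\partial S_m^\delta\cap\partial G^\delta$, and there the degenerate triangle $c_\ell+c_m=c_\ell+c_m$ forces the blow-up cone to have all three interfaces collinear: the segment $\partial S_\ell^\delta\cap\partial S_m^\delta$ and the arcs $\partial S_\ell^\delta\cap\partial G^\delta$, $\partial S_m^\delta\cap\partial G^\delta$ meet tangentially, $G^\delta$ forming the cusp while $S_\ell^\delta$ stays ``fat''. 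Thus $\partial S_\ell^\delta\cap B$ is a finite union of segments and curvature-$\kappa_\ell^\delta$ arcs meeting tangentially, hence $C^{1,1}$ (the curvature jumps between $0$ and $\kappa_\ell^\delta$ but stays bounded), and segments terminate in $B$ only at such cusps. Near $\partial B$, away from jump points of $h$ the boundary blow-up is a single interface meeting $\partial B$ and the constant-curvature structure persists up to $\partial B$; at a jump point between $\ell$ and $m$, either a $\partial S_\ell^\delta\cap\partial S_m^\delta$ segment or a component of $G^\delta$ reaches $\partial B$, and (the flat-triangle degeneracy being unavailable against the fixed datum) produces a genuine corner rather than a tangential cusp, so the exceptional set in the $C^{1,1}$ claim is exactly the jump set of $h$.

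Finally, convexity. For a connected component $U$ of $S_\ell^\delta$ with $1\le\ell\le N$, $\partial U$ is a closed piecewise-$C^{1,1}$ curve whose smooth arcs are line segments (curvature $0$), arcs of $\partial B$ (curvature $1$), and arcs of $\partial S_\ell^\delta\cap\partial G^\delta$ (curvature $\lambda/c_\ell\ge0$) --- all of nonnegative curvature with respect to the outward normal of $U$ --- and whose only corners lie on $\partial B$. Since $U\subset\overline{B}$ and $B$ is convex, the tangent line to $\partial B$ at such a corner supports $\overline{B}$, so $U$ lies locally in the corresponding half-plane and has interior angle $\le\pi$ there. A closed piecewise-$C^{1,1}$ curve with nonnegative curvature and interior angles $\le\pi$ has monotone tangent angle and total turning $2\pi$, hence bounds a convex --- in particular simply connected --- region, so $U$ is convex. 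I expect the main obstacle to be Paragraphs 1 and 3: making the $(\Lambda,r_0)$-minimality rigorous in the presence of the unpenalized set $G$ and the degenerate weight triangle, and classifying the interior and boundary blow-ups sharply enough to extract the tangential cusps and the $\partial B$-corners. Given that structural picture, the first-variation identities and the convexity argument are essentially routine.
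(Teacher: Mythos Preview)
Your outline captures the high-level strategy correctly --- blow-up classification, first-variation identities, and elimination of three-chamber junctions via the $G$-wetting mechanism all match the paper's reasoning. However, there is a genuine gap at the step where you pass from ``the blow-up cone at a point of $\partial S_\ell^\delta\cap\partial S_m^\delta\cap\partial G^\delta$ has all interfaces collinear'' to ``locally, $\partial S_\ell^\delta$ and $\partial S_m^\delta$ are two ordered graphs meeting tangentially.'' Knowing only that the blow-up at such a point $x$ is a pair of halfspaces does not by itself rule out, for instance, infinitely many thin lenses of $G^\delta$ between $S_\ell^\delta$ and $S_m^\delta$ accumulating at $x$; each lens is invisible at the blow-up scale, yet their presence would destroy the claimed finite decomposition into segments and arcs and the $C^{1,1}$ conclusion. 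Standard $\varepsilon$-regularity for immiscible fluids does not cover this because the weight triangle $(c_\ell+c_m,c_\ell,c_m)$ is degenerate: the usual improvement-of-flatness mechanism relies on the strict triangle inequality to infiltrate the third phase away, and that fails precisely when the third phase is $G$.

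The paper fills this gap with a symmetrization argument (Lemma~\ref{symmetrization lemma} and Theorem~\ref{interior resolution theorem delta positive}): one first uses slicing and the weak convergence of boundaries to locate a small rectangle $Q'$ on whose vertical sides $S_{\ell}^\delta$ and $S_{m}^\delta$ have interval traces, then tests minimality against the competitor obtained by replacing $S_{\ell}^\delta\cap Q'$ and $S_{m}^\delta\cap Q'$ by their vertical rearrangements into a hypograph and an epigraph. Equality in the Steiner-type inequality forces every vertical slice of each chamber to already be an interval, so the two boundaries are ordered graphs on $Q'$. Only then can one show that the coincidence set $\mathcal{I}=\{t:(G^\delta)_t=\emptyset\}$ is a closed interval and read off the four local pictures (i)--(iv) of Theorem~\ref{interior resolution theorem delta positive}. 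Your proposal needs this argument or a substitute; the blow-up classification alone, even combined with the degenerate-triangle observation, does not deliver the local resolution.

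A secondary point: the ``standard $(\Lambda,r_0)$-theory'' you invoke for density estimates and the elimination lemma is usually stated under strict triangle inequalities, which fail here. The paper's infiltration lemma (Lemma~\ref{infiltration lemma}) is instead proved directly by donating small pieces of $S_\ell^\delta$ to $G^\delta$ --- costless on the $G$-side since $P(G)$ is unpenalized --- so this step is recoverable, but it is not off-the-shelf.
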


%\begin{remark}[Wetting of triple junctions]
    %We emphasize that Theorem \ref{main regularity theorem delta positive} precludes the possibility of triple junctions for $\delta>0$. In particular, if $(S_1^{\delta_j},S_2^{\delta_j},S_3^{\delta_j})$ is a sequence of minimizers converging to a minimizer $(S_1^0,S_2^0, S_3^0)$ as $\delta_j\to 0$ and $x\in B$ is a triple junction of $(S_1^0,S_2^0, S_3^0)$, $G^{\delta_j} \cap B_{r_x}(x)\neq \emptyset$ for some $r_x>0$ and all large $j$. Indeed, this follows from the fact that the segments comprising $\partial S_{\ell}^{\delta_j} \cap \partial S_m^{\delta_j}$ can only terminate inside $B$ at a cusp point.
%\end{remark}

\begin{remark}
    In the case of equal weights $c_\ell=1$, Theorems \ref{main regularity theorem delta positive all of space} and \ref{main regularity theorem delta positive} can be found in \cite{BrakkeMorgan}; see also the paper \cite{MH} for methods of existence and regularity.
\end{remark}

To state our asymptotic resolution theorem on the ball, we require some knowledge of the regularity for minimizers of the $\delta=0$ problem. In the general immiscible fluids problem, there may be singular points where more than three chambers meet; see \cite[Figure 1.1]{Chan}, \cite[Figure 7]{FGMMNPY}. Since we are interested in triple junction singularities, below is a description of the behavior of minimizers on the ball in some cases where all singularities are triple junctions. 

\begin{theorem}[Regularity on the Ball for $\delta=0$]\label{main regularity theorem delta zero}
    If $N=3$ or $c_\ell=1$ for $0\leq \ell \leq N$ and $\S^0$ is a minimizer for $\mathcal{F}$ among $\mathcal{A}_0^h$, then every connected component of $\partial S_\ell^0 \cap \partial S_m^0 \cap B$ for non-zero $\ell$ and $m$ is a line segment terminating at an interior triple junction $x\in \partial S_\ell^0 \cap \partial S_m^0 \cap \partial S_n^0 \cap B$, at $x\in \partial S_{\ell}^0 \cap \partial S_m^0 \cap \partial B$ which is a jump point of $h$, or at a boundary triple junction $x\in \partial S_{\ell}^0 \cap \partial S_m^0 \cap \partial S_n^0\cap \partial B$ which is a jump point of $h$. Moreover, for each triple $\{\ell,m,n\}$ of distinct non-zero indices there exists angles $\theta_\ell$, $\theta_m$, $\theta_n$ satisfying
\begin{align}\label{classical angle conditions intro}
    \frac{\sin \theta_\ell}{c_{m}+c_n}=\frac{\sin \theta_m}{c_{\ell}+c_n}=\frac{\sin \theta_n}{c_{\ell}+c_m}
\end{align}
such that if $x\in B$ is an interior triple junction between $S_\ell^0$, $S_m^0$, and $S_m^0$, then there exists 
$r_x>0$ such that $S_\ell^0\cap B_{r_x}(x)$ is a circular sector determined by $\theta_\ell$, and similarly for $m$, $n$. Finally, any connected component of $S_\ell^0$ for $1\leq \ell\leq N$ is convex.
\end{theorem}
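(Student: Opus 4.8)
The plan is to treat $\mathcal{S}^0$ as a genuine weighted minimizing cluster with a Dirichlet (trace) condition and \emph{no} volume constraint, and then run the standard blow-up machinery, with the hypothesis ``$N=3$ or $c_\ell=1$'' used only to classify singular cones. Since $|G^0|=0$, the tuple $\mathcal{S}^0$ is really a partition $\{S_1^0,\dots,S_N^0\}$ of $B$ with trace $h$, with $S_0^0=\mathbb{R}^2\setminus B$ fixed; moreover $\sum_{\ell=1}^N c_\ell\,\mathcal{H}^1(\partial^* S_\ell^0\cap\partial B)=\sum_{\ell=1}^N c_\ell\,\mathcal{H}^1(\{h=\ell\})$ is frozen by \eqref{trace constraint}, so minimality of $\mathcal{S}^0$ is equivalent to minimality of $\sum_{1\le \ell<m\le N}(c_\ell+c_m)\,\mathcal{H}^1(\partial^* S_\ell^0\cap\partial^* S_m^0\cap B)$ among partitions of $B$ with trace $h$. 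Thus, for competitors supported in $B$, $\mathcal{S}^0$ is a weighted minimizing cluster without volume constraint. The steps are: (i) use minimizing-cluster regularity to reduce to a classification of interior and boundary blow-up cones; (ii) classify interior cones and read off the segment structure and \eqref{classical angle conditions intro}; (iii) do the same on $\partial B$ via half-plane blow-ups; (iv) deduce convexity.

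For (i)--(ii): by Almgren's almost-everywhere regularity and its planar refinement \cite{Alm76,Morgan94,MorganBook}, $\partial S_\ell^0\cap B$ is, away from a locally finite set $\Sigma$, an embedded $C^{1,1}$ (even real-analytic) curve, and at every point of $B$ each blow-up of $\mathcal{S}^0$ is a locally minimizing partition cone. Since there is no volume constraint, the first variation carries no multiplier, so every regular arc of $\partial S_\ell^0\cap\partial S_m^0\cap B$ has vanishing curvature and hence is a line segment, which can terminate in $B$ only at a point of $\Sigma$. The crucial point -- the \emph{only} place the hypothesis enters -- is that every minimizing planar partition cone for the weights $\{c_\ell\}$ is a single line or a union of three rays from the origin: when $c_\ell=1$ this is the classical planar Plateau-law statement, any $k$-ray cone with $k\ge 4$ being strictly beaten by replacing a neighborhood of the origin with two nearby triple junctions; when $N=3$ there are at most four regions, and a finite case analysis over the cyclic arrangements of $\le 4$ weighted regions shows the same resolution strictly lowers the energy, the strict gain coming from the strict weighted triangle inequality $c_i+c_j<(c_i+c_k)+(c_j+c_k)$. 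Granting this, every point of $\Sigma\cap B$ is an interior triple junction, giving the asserted structure of $\partial S_\ell^0\cap\partial S_m^0\cap B$. At such a junction $x$ the first variation is a force balance: the three tension vectors, of magnitudes $c_\ell+c_m$, $c_m+c_n$, $c_n+c_\ell$ directed along the rays, sum to zero, so they close into a triangle with interior angles $\pi-\theta_\ell,\pi-\theta_m,\pi-\theta_n$, where $\theta_j$ is the opening of the $S_j^0$-sector; the law of sines for this triangle is exactly \eqref{classical angle conditions intro}, and it forces $0<\theta_j<\pi$. The system \eqref{classical angle conditions intro} together with $\theta_\ell+\theta_m+\theta_n=2\pi$ has a unique solution depending only on $\{c_\ell,c_m,c_n\}$, and taking $r_x$ small enough that $B_{r_x}(x)$ meets no other point of $\Sigma$ and no other interface makes $S_\ell^0\cap B_{r_x}(x)$ exactly the sector of angle $\theta_\ell$.

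For (iii)--(iv): at $x\in\partial B$ one blows up in the tangent half-plane $H_x$; the blown-up trace is the blow-up of $h$, hence constant or a single jump. If $h$ has no jump at $x$, the constant-trace minimizing cone is trivial ($S_\ell^0$ filling $H_x$), since any other competitor carries positive interior interface length, so $x$ is a regular boundary point. If $x$ is a jump point, the cone consists of rays from the origin into $H_x$, and the half-plane version of the cone classification (proved in the same way) bounds the number of chambers meeting at $x$ by three, which yields the trichotomy in the statement and shows the segments of $\partial S_\ell^0\cap\partial S_m^0\cap B$ terminate on $\partial B$ as described. For convexity, fix a connected component $\Omega$ of $S_\ell^0$ with $1\le \ell\le N$. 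A standard pruning argument (eliminating any bounded component of a chamber that does not touch $\partial B$ strictly decreases the energy) shows no chamber has an interior hole, so $\partial\Omega$ is a simple closed curve built from line segments (interior interfaces, curvature $0$) and arcs of $\partial B$ (curving toward $\Omega$, since $\Omega\subset B$), meeting at corners that are interior triple junctions, boundary triple junctions, or interface--$\partial B$ contact points. At an interior triple junction the interior angle of $\Omega$ is $\theta_\ell\in(0,\pi)$, and at every boundary corner the blow-up lives in the half-plane $H_x$, so the corresponding angle again lies in $(0,\pi)$. A simple closed curve of nonnegative curvature all of whose corners have positive turning angle bounds a convex region, so $\Omega$ is convex.

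The main obstacle is the $N=3$, arbitrary-weights case of the interior cone classification -- verifying by a finite case analysis that no arrangement of at most four weighted regions supports a stable junction of four or more chambers -- and its half-plane analogue at $\partial B$; once these are in hand, the remaining steps are a routine application of the minimizing-cluster regularity theory of \cite{Alm76,Morgan94} together with the elementary first-variation computations above.
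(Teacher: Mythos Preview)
Your proposal is correct and follows essentially the same strategy as the paper: reduce to a classification of minimizing blow-up cones (interior and half-plane), use the hypothesis ``$N=3$ or $c_\ell=1$'' precisely to rule out junctions of four or more sectors, and read off the segment structure, angle conditions \eqref{classical angle conditions intro}, and convexity from that classification. The paper builds the requisite machinery in-house (density estimates, monotonicity, blow-up compactness, and the cone classification of Theorem~\ref{zero delta classification theorem}) in Section~\ref{sec:existence of blowup cones} rather than citing \cite{Alm76,Morgan94}, and its $N=3$ cone argument is phrased slightly differently---a repeated chamber among $S_1,S_2,S_3$ allows a direct triangle-inequality improvement rather than a case analysis---but the overall outline coincides with yours.
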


\begin{remark}
    The proof of Theorem \ref{main regularity theorem delta zero} also applies when $N>3$ or $c_\ell$ are merely positive to show that the interfaces of a minimizer are finitely many segments meeting at isolated points. For the immiscible fluids problem on the ball, this has been observed in \cite[Corollary 4.6]{Morgan98}; see also \cite{White85}. Therefore, one may prove Theorem \ref{main regularity theorem delta zero} by classifying the possible tangent cones if $N=3$ or $c_\ell=1$ (Theorem \ref{zero delta classification theorem}). Since the proof of Theorem \ref{main regularity theorem delta positive}, which is in the language of sets of finite perimeter, can be easily modified to include a full proof of Theorem \ref{main regularity theorem delta zero}, we provide these arguments for completeness and as an alternative to the approach in \cite{Morgan98} via rectifiable chains.  
\end{remark}

\noindent Our last main result is a complete resolution of minimizers on the ball for small $\delta$ and equal weights.

\begin{theorem}[Resolution for Small $\delta$ on the Ball]\label{resolution for small delta corollary}
Suppose that $c_\ell=1$ for $0\leq \ell\leq N$ and $h\in BV(\partial B;\{1,\dots,N\})$. Then there exists $\delta_0>0$, a function $f(\delta)\to 0$ as $\delta \to 0$, and $r>0$, all depending on $h$, such that if $0<\delta<\delta_0$ and $\S^\delta$ is a minimizer in \eqref{ball problem}, then $|G^\delta|=\delta$ and there exists a minimizer $\S^0$ among $\mathcal{A}_{0}^h$ such that 
\begin{align}\label{hausdorff convergence of chambers in corollary}
    %\hd (S_\ell^{\delta_j}, S_\ell )
\max \big\{ \sup\{\dist (x,S_\ell^0):x\in S_\ell^{\delta}\}\,,\,\sup\{\dist (x,S_\ell^{\delta}):x\in S_\ell^{0}\} \big\} \leq f(\delta)\quad \textit{for }1\leq \ell \leq N
\end{align}
and, denoting by $\Sigma$ the set of interior and boundary triple junctions of $\S^0$,
\begin{align}\label{hausdorff convergence of remnants in corollary}
    %\hd (S_\ell^{\delta_j}, S_\ell )
\max \big\{ \sup \{\dist (x,\Sigma):x\in S_\ell^{0}\}\,,\,\sup \dist \{(x,G^{\delta}):x\in \Sigma\} \big\} \leq f(\delta)% \quad \textit{for }1\leq \ell \leq N\,.
\end{align}
and for each $x\in \Sigma$, $B_{r}(x) \cap \partial G^{\delta}$ consists of three circle arcs of curvature $\kappa=\kappa(\S^\delta)$.
\end{theorem}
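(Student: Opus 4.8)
The plan is to combine a compactness/$\Gamma$-convergence argument as $\delta\to 0$ with the quantitative regularity from Theorem~\ref{main regularity theorem delta positive} and the structure of $\delta=0$ minimizers from Theorem~\ref{main regularity theorem delta zero}. First I would establish the convergence: given $\delta_j\to 0$ and minimizers $\S^{\delta_j}$, the energies $\mathcal{F}(\S^{\delta_j})$ are bounded, so after passing to a subsequence the chambers $S_\ell^{\delta_j}$ converge in $L^1_{\loc}$ to sets $S_\ell^0$, and $|G^{\delta_j}|\to 0$ forces $\sum_\ell |S_\ell^0|=|B|$ (plus the complement), so $\S^0=(S_0^0,\dots,S_N^0)$ is an admissible cluster for $\mathcal{A}_0^h$ once one checks the trace constraint \eqref{trace constraint} passes to the limit. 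Lower semicontinuity of $\mathcal{F}$ together with a recovery-sequence upper bound (any $\delta=0$ competitor is admissible for $\mathcal{A}_\delta^h$, so $\inf_{\mathcal{A}_\delta^h}\mathcal{F}\le \inf_{\mathcal{A}_0^h}\mathcal{F}$, while the reverse inequality follows from l.s.c.\ along the sequence) shows $\S^0$ is a minimizer among $\mathcal{A}_0^h$ and that $\mathcal{F}(\S^{\delta_j})\to\mathcal{F}(\S^0)$; this energy convergence upgrades $L^1$ convergence of the reduced boundaries and ultimately yields the Hausdorff convergence \eqref{hausdorff convergence of chambers in corollary} via a standard density-estimate argument (uniform lower perimeter density bounds, which are available because the minimizers are $(\Lambda,r_0)$-minimizers with constants uniform in $\delta$).

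Next I would localize near $\Sigma$. By Theorem~\ref{main regularity theorem delta zero}, $\S^0$ has finitely many triple junctions (interior and boundary), and near each $x\in\Sigma$ the cluster looks like a standard $Y$ (three segments meeting at $120^\circ$ by the equal-weights angle conditions \eqref{classical angle conditions intro}); away from $\Sigma$, $\partial S_\ell^0\cap\partial S_m^0$ is a union of segments meeting $\partial B$ transversally at jump points of $h$. Using Hausdorff convergence and the fact that $\partial S_\ell^{\delta_j}\cap\partial S_m^{\delta_j}$ is a union of line segments that can only terminate at a cusp point of $G^{\delta_j}$ or at a jump point of $h$ on $\partial B$ (Theorem~\ref{main regularity theorem delta positive}), I would argue that for $j$ large the only cusp points of $G^{\delta_j}$ lie in small balls $B_r(x)$, $x\in\Sigma$: away from $\Sigma$, uniform $C^{1,1}$ estimates and the convergence of $\partial S_\ell^{\delta_j}$ to the smooth interfaces of $\S^0$ force the interface curvatures to vanish (hence segments) and leave no room for a $G^{\delta_j}$-component, since a component of $G^{\delta_j}$ of small area sitting on a flat interface would be removable by a competitor decreasing energy. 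Thus $G^{\delta_j}\subset\bigcup_{x\in\Sigma}B_r(x)$, which with \eqref{hausdorff convergence of chambers in corollary} gives \eqref{hausdorff convergence of remnants in corollary}.

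Then I would analyze a single $G^{\delta_j}$-component $G_x^{\delta_j}$ inside $B_r(x)$. Blowing up at $x$ and using the classification of the limiting cone (the $Y$ at an interior triple junction, or the corresponding half-$Y$ at a boundary triple junction adapted to $\partial B$), I would show $G_x^{\delta_j}$ is connected, is bounded by exactly three arcs of constant curvature $\kappa_\ell^\delta$ with $\kappa_1^\delta=\dots=\kappa_N^\delta=\kappa(\S^\delta)$ by \eqref{curvature condition} with $c_\ell=1$, and that these arcs meet in cusps (in $B$) or corners (on $\partial B$) as in Theorem~\ref{main regularity theorem delta positive}; non-degeneracy of the limiting $Y$ rules out more complicated topology (e.g.\ a component touching only one chamber, or two interfaces without a junction), essentially because such a configuration would have a non-tangential singularity or an excess of perimeter incompatible with energy convergence. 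Finally, to get $|G^\delta|=\delta$ I would argue by contradiction: if $|G^\delta|<\delta$, then $G^\delta$ is a \emph{free} (unconstrained) perimeter-type minimizer relative to the chambers, which by the curvature relation and the first variation must have $\kappa(\S^\delta)=0$, making each $\partial G^\delta$ component a union of segments meeting in cusps — geometrically impossible for a bounded positive-area set wetting a triple junction unless $G^\delta=\emptyset$, but $G^\delta=\emptyset$ contradicts $\S^0$ having a genuine singularity (as in Remark~\ref{remark on soft constraint}), so $|G^\delta|=\delta$.

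\medskip

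\noindent\textbf{Main obstacle.} The delicate step is the \emph{localization and uniqueness of the wetting components}: ruling out, uniformly in small $\delta$, any $G^\delta$-component away from $\Sigma$ and showing each component near $x\in\Sigma$ has exactly the three-arc structure. This requires a quantitative stability statement — that a minimizer of \eqref{ball problem} which is $f(\delta)$-close to $\S^0$ must inherit its interface combinatorics near each junction — which I expect to prove by a contradiction/compactness argument: if along $\delta_j\to 0$ some component violated the claimed structure, rescale and pass to a limit contradicting either the classification of tangent cones (Theorem~\ref{zero delta classification theorem}, referenced in the remarks) or the minimality of the limiting $Y$. Getting the \emph{uniform} radius $r$ and the cleanness of ``exactly three arcs'' (as opposed to, say, a small extra bubble of $G^\delta$ pinching off) is where most of the real work lies.
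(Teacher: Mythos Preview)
Your overall strategy matches the paper's: reduce to a subsequential statement, obtain a limiting $\delta=0$ minimizer, upgrade to Hausdorff convergence, localize $G^{\delta_j}$ near $\Sigma$, and resolve the structure near each junction. The paper, however, runs almost every step through a tool you do not mention: the \emph{convexity of connected components} of each $S_\ell^\delta$ (last line of Theorem~\ref{main regularity theorem delta positive}). It first proves a ``no islands'' step (each component of $S_\ell^{\delta}$ must touch $\{h=\ell\}$; this is where $c_\ell\equiv 1$ is used), so the number and minimal area of components are bounded uniformly in $\delta$. Hausdorff convergence of the chambers then comes for free from Hausdorff compactness of uniformly bounded convex sets (Lemma~\ref{convex sets convergence lemma}), \emph{not} from uniform density estimates. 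Your claim that the $(\Lambda,r_0)$-constants are uniform in $\delta$ is not established in the paper --- the $\Lambda$ in Lemma~\ref{lambda r nought lemma}.(i) comes from volume-fixing on $G^\delta$, and it is not obvious this stays bounded as $|G^\delta|\to 0$ --- so the convexity route is both easier and safer. For the localization $G^{\delta_j}\to\Sigma$, the paper first proves $\kappa_j\to\infty$ (since $\partial S_\ell^{\delta_j}$ is $C^{1,1}$ with norm controlled by $\kappa_j$ but converges to a set with a genuine corner at each $x\in\Sigma$), and then uses that a graph of constant curvature $\kappa_j$ lives over an interval of length at most $2\kappa_j^{-1}$ to squeeze $G^{\delta_j}$ toward $\Sigma$; this is more concrete than your ``uniform $C^{1,1}$ estimates leave no room'' sketch.

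The three-arcs step is where the approaches diverge most. You propose a blow-up/classification argument; the paper instead gives a short direct argument again based on convexity. By component-wise Hausdorff convergence, exactly one convex component $C_\ell^j$ of each of the three relevant chambers enters $B_r(x)$; if some $\partial S_\ell^{\delta_j}\cap\partial G^{\delta_j}$ had two arcs there, then $\partial S_\ell^{\delta_j}\cap B_r(x)$ would contain at least three segments (two are not enough to resolve three cusp endpoints given that their other endpoints exit $B_r(x)$), forcing two \emph{non-collinear} segments to lie in $\partial C_\ell^j\cap\partial C_{\ell'}^j$ for a single $\ell'$ --- impossible, since a planar convex set lies on one side of any tangent line. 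This one-line convexity observation is exactly the ``missing idea'' resolving the main obstacle you correctly flag. Your argument for $|G^\delta|=\delta$ is essentially the paper's (if the constraint is slack, pushing $\partial G^\delta$ outward is admissible and decreases energy since its curvature is strictly negative).
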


\begin{remark}[Wetting of Singularities]\label{wetting remark}
    For the soap bubble capillarity analogue of \eqref{thin film cluster formulation} on $B$,
\begin{align}\label{thin film ball formulation}
    \inf\{ \mathcal{F}(\S) : \S\in \mathcal{A}_\delta^{h},\, \mbox{$S_\ell$ open, $\cl S_\ell \cap \cl S_m\subset \{x\in \partial B: \mbox{$h$ jumps between 
 }\ell, m\}$} \}\,,
\end{align}
we may also use Theorem \ref{main regularity theorem delta positive} to approximate a minimizer in \eqref{ball problem} by a sequence satisfying the restrictions in \eqref{thin film ball formulation}. Therefore, if $\delta>0$ is small, a minimizing sequence for \eqref{thin film ball formulation} converges to a minimizer $\S^\delta$ of \eqref{ball problem}, which in turn is close to a minimizer $\S^0$ for the $\delta=0$ problem. Furthermore, by Theorem \ref{resolution for small delta corollary}, if $\delta<\delta_0$ and the weights $c_\ell$ are equal, each singularity of $\S^0$ is ``wetted" by a component of $G^\delta$ bounded by three circular arcs; see Figure \ref{perturbation figure}. Also, \eqref{hausdorff convergence of remnants in corollary} shows that $\Sigma$ coincides with the set of accumulation points of the ``wet" regions $G^\delta$ as $\delta \to 0$. In the context of the Plateau problem in $\mathbb{R}^2$, this equivalence has been conjectured in \cite[Remark 1.7]{KMS3}. If $\dist(\Sigma,\partial B)>f(\delta)$, then $\S^\delta$ coincides with the ``wetting" of a dry minimizer; see Remark \ref{remark:explicit description remark}.
\end{remark}

\begin{remark}[Triple Junctions for Vector Allen-Cahn]
    Theorem \ref{main regularity theorem delta positive} is used in a construction by \'{E}. Sandier and P. Sternberg of an entire solution $U:\mathbb{R}^2\to \mathbb{R}^2$ to the system $\Delta U = \nabla_u W(U) $ for a triple-well potential $W$ without symmetry assumptions on the potential \cite{SS}.
\end{remark}

\subsection{Idea of proof} The outline to prove Theorems \ref{main regularity theorem delta positive all of space} and \ref{main regularity theorem delta positive} can be summarized in two main steps: first, classifying the possible blow-ups at any interfacial point of a minimizer $\S^\delta$, and; second, using one of the (a priori non-unique) blow-ups at $x$ to resolve $\S^\delta$ in a small neighborhood of $x$. To demonstrate the ideas, we describe these steps for a minimizer $\S^\delta$ for the problem \eqref{all of space problem} on $\mathbb{R}^2$ at $x=0$. For the classification of blow-ups, we use a blow-up version of the observation below \eqref{trace constraint} to show that no blow-up of any chamber $S_\ell^\delta$ can be anything other than a halfspace. This of course differs from the usual blow-ups in two-dimensional cluster problems, in which three or more chambers can meet at a point.
\par
Armed now with a list of the possible blow-ups at $0$, which we do not yet know are unique, we must use them to resolve the minimizer in a small neighborhood of $0$. In the case that there exists a blow-up coming from $G^\delta$ and a single chamber $S_\ell^\delta$, lower area density estimates on the remaining chambers imply that in a small ball $B_r(0)$, $ S_{\ell'}^\delta \cap B_r(0)=\emptyset$ for $\ell \neq \ell'$, so that $\partial S_\ell^\delta \cap B_r(0)$ is regular by the classical theory for volume-constrained perimeter minimizers. The main hurdle is when the blow-up at $0$ is two halfspaces coming from $S_{\ell_i}^\delta$ for $i=1,2$. In the classical regularity theory for planar clusters (see \cite[Section 11]{White} or \cite[Corollary 4.8]{Leo01}), this would imply that on $B_r(0)$, the interface must be an arc of constant curvature separating each $S_{\ell_i}^\delta \cap B_r(0)$. Here, there is the possibility that $0\in \partial G^\delta$ but $G^\delta$ has density $0$ at $0$. This behavior cannot be detected at the blow-up level, although one suspects the interfaces near $0$ should be two ordered graphs over a common line which coincide at $0$ and possible elsewhere also. To prove this and thus complete the local resolution, we use the convergence along a sequence of blow-ups to a pair of halfspaces and the density estimates on the other chambers to locate a small rectangle $Q=[-r,r]\times [r,r]$ such that $Q\subset S_{\ell_1}^\delta \cup S_{\ell_2}^\delta \cup G^\delta$ and $\partial Q \cap \partial S_{\ell_i}^\delta =\{(-r,a_i), (r,b_i)\}$ for some $a_1\leq a_2$ and $b_1\leq b_2$. At this point, since we have the desired graphicality on $\partial Q$, we can combine a symmetrization inequality for sets which are graphical on the boundary of a cube (Lemma \ref{symmetrization lemma}), the minimality of $\S^\delta$, and the necessary conditions for equality in Lemma \ref{symmetrization lemma} to conclude that $\partial S_{\ell_i}^\delta \cap Q$ are two ordered graphs. 

\subsection{Organization of the paper} In Section \ref{sec:notation and prelim}, we recall some preliminary facts. Next, we prove the existence of minimizers in Section \ref{sec:existence of minmizers}. Section \ref{sec:existence of blowup cones} contains the proof of the existence and classification of blow-up cones at any interfacial point. %The arguments in this section are essentially standard in these types of problems, but the presence of the trace condition \eqref{trace constraint} and the ``unpenalized" set $G^\delta$ requires some care in statements and proofs, so we have included details.
In Sections \ref{sec: proofs of main regularity theorems} and \ref{sec:proof of delta 0 theorem}, we prove Theorems \ref{main regularity theorem delta positive all of space} and \ref{main regularity theorem delta positive} and Theorem \ref{main regularity theorem delta zero}, respectively. Finally, in Section \ref{sec:resolution for small delta}, we prove Theorem \ref{resolution for small delta corollary}.
\subsection{Acknowledgments} This work was supported by the NSF grant RTG-DMS 1840314. I am grateful to \'{E}tienne Sandier and Peter Sternberg for several discussions during the completion of this work and to Frank Morgan for valuable comments on the literature for such problems.

\section{Notation and Preliminaries}\label{sec:notation and prelim}
\subsection{Notation}
Throughout the paper, $B_r(x)=\{y\in \mathbb{R}^2:|y-x|<r\}$. When $x=0$, we set $B_R:=B_R(0)$ and $B=B_1(0)$. Also, for any Borel measurable $U$, we set
\begin{align*}
    \mathcal{F}(\S;U) = \sum_{\ell=0}^N c_\ell P(S_\ell;U)\,.
\end{align*}
We will use the notation $E^{(t)}$ for the points of Lebesgue density $t\in [0,1]$.
\par
We remark that since $h\in BV(\partial B; \{1,\dots,N\})$, there exists a partition of $\partial B$ into $N$ pairwise disjoint sets $\{A_1,\dots, A_N\}$ such that $h = \sum_{\ell=1}^N \ell \,1_{A_\ell}$, and each $A_\ell$ is a finite union of pairwise disjoint arcs:
\begin{align}\label{arc definition}
A_\ell := \cup_{i=1}^{I_\ell}a_i^\ell\,.
\end{align}
For each $1\leq \ell \leq N$ and $1\leq i \leq I_\ell$, we let
\begin{align}\label{chord def}
c_i^\ell
\end{align}
be the chord that shares endpoints with $a_i^\ell$. Finally, we call
\begin{align}\label{circular segments}
C_i^\ell
\end{align}
the open circular segments (regions bounded by an arc and its corresponding chord) corresponding to the pair $(a_i^\ell, c_i^\ell)$. 

\subsection{Preliminaries}
Regarding the functional $\mathcal{F}$, we observe that when $\delta=0$, 
\begin{align}\notag
    \mathcal{F}(\S) = \sum_{0\leq \ell< m\leq N} c_{\ell m}\mathcal{H}^1(\partial^* S_\ell \cap \partial^* S_m)\,,
\end{align}
where $c_{\ell m } :=c_\ell + c_m$, and the positivity of $c_\ell$ for $1\leq \ell \leq N$ is equivalent to the strict triangle inequalities
\begin{align}\label{strict triangle ineq}
    c_{\ell m}< c_{\ell i} + c_{i m}\quad \forall \ell\neq m\neq i \neq \ell\,.
    %c_{12} < c_{13}+c_{23}\,,\,\, c_{13} < c_{12}+c_{23}\,,\,\, c_{23}< c_{12}+c_{13}\,.
\end{align}
We also note that for any $h\in BV(\partial B;\{1,\dots,N\})$, the energy of any cluster $\S$ satisfying the boundary condition \eqref{trace constraint} can be decomposed as
\begin{align}\label{energy independent of boundary}
    \mathcal{F}(\S) = 2\pi c_0 +\sum_{\ell=1}^N c_\ell \mathcal{H}^1(A_\ell)+ \sum_{\ell=1}^N c_\ell P(S_\ell;B) =: C(h) + \mathcal{F}(\S;B)\,,
\end{align}
where $C(h)$ is a constant independent of $\S$. Therefore, minimizing $\mathcal{F}$ among $\mathcal{A}_\delta^h$ for any $\delta>0$ is equivalent to minimizing $\mathcal{F}(\cdot;B)$, so we will often ignore the boundary term for the problem on the ball.\par
We now recall some facts regarding sets of finite perimeter. Unless otherwise stated, we will always adhere to the convention that among the Lebesgue representatives of a given set of finite perimeter $E$, we are considering one 
that satisfies \cite[Proposition 12.19]{Mag12}
\begin{align}\label{boundary convention 1}
    \spt \,\mathcal{H}^1 \mres \partial^* E = \partial E
\end{align}
and
\begin{align}\label{boundary convention 2}
\partial E= \{x:0<|E \cap B_r(x)|<\pi r^2\,\,\forall r>0 \}\,.
\end{align}

%\begin{lemma}[Volume fixing variations]\cite[Lemma 17.21]{Mag12}\label{volume fixing lemma}
%    If $E\subset \mathbb{R}^2$ is a set of finite perimeter and $A$ is an open set such that $\mathcal{H}^{1}(\partial^*E \cap A) >0$, then there exists $\s_0 = \s_0(E,A)>0$, $C=C(E,A)<\infty$, and a one parameter family of diffeomorphisms $\{f_t\}$ such that for every $\s\in (-\s_0,\s_0)$, there exists $t \in [-C\s, C\s]$ such that $f_t(E) \Delta E \subset\!\subset A$,
%    \begin{align*}
       % |f_t(E)|=|E|+\s\,,\quad\textit{and}\quad |P(f_t(E);A) - P(E;A)| \leq C|\s|\,.
   % \end{align*}
%\end{lemma}

%{\color{blue}Technically the above statement follows from the proof of \cite[Lemma 17.21]{Mag12}. The proof is less than a page, but this should probably be double checked.}

\noindent We will need some facts regarding slicing sets of finite perimeter by lines or circles.

\begin{lemma}[Slicing sets of finite perimeter]\label{slicing lemma}
    Let $u(x)=x\cdot \nu$ for some $\nu\in \mathbb{S}^1$ or $u(x)=|x-y|$ for some $y\in \mathbb{R}^2$, and, for any set $A$, let $A_t$ denote $A\cap \{u=t\}$. Suppose that $E\subset \mathbb{R}^2$ is a set of finite perimeter.
\begin{enumerate}[label=(\roman*)]
\item For every $t\in \mathbb{R}$, there exist traces $E^+_t$, $E^-_t\subset \{u=t \}$ such that
\begin{align}\label{trace difference}
    \int_{\{u=t\}}|\mathbf{1}_{E^+_t}-\mathbf{1}_{E^-_t}|\,d\mathcal{H}^1 = P(E; \{u=t \})\,.
\end{align}
\item Letting $S=\{x:x\cdot \nu^\perp \in [a,b]\}$ for compact $[a,b]$ when $u=x\cdot \nu$ or $S = \mathbb{R}^2$ when $u=|x-y|$,
\begin{align}\label{convergence of trace integrals}
    \lim_{s\downarrow t} \int_{\{u=s\}\cap S} \mathbf{1}_{E^-_t}\,d\mathcal{H}^1 = \int_{\{u=t\}\cap S} \mathbf{1}_{E^+_t}\,d\mathcal{H}^1\,.
\end{align}
\item For almost every $t\in \mathbb{R}$, $E_t^+=E_t^-=E_t$ up to an $\mathcal{H}^1$-null set, $E_t$ is a set of finite perimeter in $\{u=t\}$, and
\begin{align}\label{equivalence of boundaries}
    \mathcal{H}^{0}((\partial^* E)_t \Delta \partial^*_{\{u=t\}}E_t)=0\,.
\end{align}
\end{enumerate}
\end{lemma}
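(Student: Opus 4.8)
\textbf{Plan of proof for Lemma \ref{slicing lemma}.}

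The plan is to reduce everything to the well-known one-dimensional slicing theory for $BV$ functions, using the coarea-type decomposition of the perimeter measure with respect to the Lipschitz function $u$. I would organize the argument around the pushforward $u_\# (\mathcal{H}^1 \mres \partial^* E)$ together with the vector measure $D \mathbf{1}_E$, distinguishing the ``tangential'' part of the reduced boundary (where $\nabla u$ is parallel to $\nu_E$, which contributes atoms to the pushforward onto individual level sets $\{u=t\}$) from the ``transversal'' part (which is absolutely continuous after pushforward, hence charges a.e.\ level set with zero mass). Concretely, for a line foliation $u(x) = x\cdot\nu$ one writes $\mathbb{R}^2 \cong \{u = t\} \times \mathbb{R}$ via the isometry $(y,t) \mapsto y + t\nu$ and invokes the classical slicing theorem for sets of finite perimeter \cite[Theorem 18.11]{Mag12} (or the $BV$ slicing theorem \cite[Section 3.11]{AFP}): for $\mathcal{H}^1$-a.e.\ $t$ the slice $E_t$ is a set of finite perimeter in the line $\{u=t\}$, and $(\partial^* E)_t = \partial^*_{\{u=t\}} E_t$ up to an $\mathcal{H}^0$-null set; this is precisely item (iii). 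For the circular foliation $u(x) = |x-y|$ one first restricts to $\mathbb{R}^2 \setminus \{y\}$ and uses polar coordinates $(\rho,\theta)$ centered at $y$, under which the level sets $\{u = t\}$ become the circles $\{\rho = t\}$; the change of variables is a bi-Lipschitz diffeomorphism on $\{0 < \rho < R\}$ for each $R$, so the same slicing theorem applies on each annulus and then one lets $R \to \infty$, the single point $\{y\}$ being $\mathcal{H}^1$-negligible and hence irrelevant for the a.e.\ statement.

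For item (i), the existence of one-sided traces $E^+_t, E^-_t$ on \emph{every} level set $\{u=t\}$: here I would use that $\mathbf{1}_E \in BV_{\loc}(\mathbb{R}^2)$ and that, fixing $t$, the function $s \mapsto \mathbf{1}_E(\cdot + s\nu)$ (resp.\ the radial restriction) is, for a fixed generic transversal parameter, a $BV$ function of the single variable $s$ near $s=t$ for $\mathcal{H}^1$-a.e.\ choice of transversal coordinate; a $BV$ function of one variable has left and right limits at every point, and these assemble (using Fubini together with the $BV$ slicing characterization) into $\mathcal{H}^1$-measurable functions $\mathbf{1}_{E^-_t}, \mathbf{1}_{E^+_t}$ on $\{u=t\}$. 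The identity \eqref{trace difference} then says that the total mass of the part of $\|D\mathbf{1}_E\|$ carried by $\{u=t\}$ equals the $\mathcal{H}^1$-measure of the set where the two traces disagree; this is the monotone jump set of the one-dimensional slices integrated over the transversal variable, i.e.\ exactly the atom of $u_\#\|D\mathbf{1}_E\|$ at $t$, which by the coarea decomposition is $P(E; \{u = t\})$. Item (ii) is then a one-sided continuity statement for traces: it follows from the continuity from one side of the $BV$ representative in one variable (the right limit at $t$ equals the value just obtained as $s \downarrow t$) combined with dominated convergence in the transversal variable, the domination being supplied by the fixed finite measure $\mathcal{H}^1 \mres S \cap \{u = s\}$ for $s$ near $t$ (this is where the restriction to a slab $S$ of finite width, or the automatic finiteness for circles, is used).

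The main obstacle I anticipate is \emph{not} item (iii) — that is a citation — but rather setting up items (i) and (ii) cleanly for the circular foliation and at the single ``bad'' point $x=y$, and, more substantively, proving the everywhere (not a.e.) statements in (i)--(ii) with the correct one-sided continuity in \eqref{convergence of trace integrals}. The subtlety is that $E^+_t$ and $E^-_t$ genuinely differ exactly on the $t$'s that are atoms of the pushforward perimeter (the ``horizontal'' pieces of $\partial^* E$), so one cannot simply quote the a.e.\ slicing theorem; one must track which of the two traces is the limit from $s \downarrow t$ versus $s \uparrow t$. I would handle this by fixing once and for all the convention that $E^+_t$ is the ``upper'' trace (limit as $s \downarrow t$) and $E^-_t$ the ``lower'' one, deriving the one-dimensional jump/continuity facts from the structure theorem for one-variable $BV$ functions, and then upgrading from a.e.\ transversal coordinate to an honest $\mathcal{H}^1$-statement on $\{u=t\}$ by a Fubini argument together with the lower semicontinuity/consistency of the trace operators. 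Once this bookkeeping is in place, \eqref{trace difference} and \eqref{convergence of trace integrals} drop out, and \eqref{equivalence of boundaries} is the classical slicing identity for the a.e.\ level set where the two traces coincide.
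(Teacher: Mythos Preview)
Your plan is essentially correct, but it is considerably more elaborate than what the paper actually does. The paper dispatches (i) by a direct citation to \cite[(2.15)]{Giu84} for the existence and jump identity of one-sided traces, and handles (iii) exactly as you do, by citing \cite[Theorem 18.11]{Mag12} (plus the averaging property of traces for the $E_t^\pm = E_t$ part). The real difference is in (ii): rather than threading through one-dimensional $BV$ theory and dominated convergence in the transversal variable, the paper gives a two-line argument via the divergence theorem on the slab $(t,s)\times(a,b)$, computing $0=\int_{(t,s)\times(a,b)\cap E}\dive\, e_1$ and identifying the boundary terms as the two trace integrals plus error terms that vanish as $s\downarrow t$ (one bounded by $P(E;(t,s)\times(a,b))$, the other by $2(s-t)$).

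Your route would work, but you are anticipating more difficulty than there is: the ``everywhere'' trace statements in (i) are already packaged in Giusti, and the one-sided continuity in (ii) falls out of a single integration by parts rather than requiring you to assemble slicewise limits and justify an exchange of limits. The divergence-theorem argument also transparently handles the circular case with no extra polar-coordinate bookkeeping. What your approach buys is a self-contained derivation from first principles of $BV$ slicing, which is pedagogically nicer but unnecessary here.
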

\begin{proof}
    The first item can be found in \cite[(2.15)]{Giu84}. We prove the second item when $u=\vec{e}_1 \cdot x$; the proof with any other $\nu$ or when $u=|x-y|$ is similar. By the divergence theorem \cite[Theorem 2.10]{Giu84},
\begin{align}\notag
  0 &= \int_{(t,s)\times (a,b) \cap E}\dive \vec{e}_1  \\ \notag
  &=\int_{\{u=s\}\cap S} \mathbf{1}_{E^-_t}\,d\mathcal{H}^1 - \int_{\{u=t\}\cap S} \mathbf{1}_{E^+_t}\,d\mathcal{H}^1 + \int_{\partial^* E \cap (t,s)\times (a,b)} \vec{e}_1 \cdot \nu_{E}\,d\mathcal{H}^1 \\ \notag
  &\qquad + \int_{\partial^* (E \cap(t,s)\times (a,b)) \cap (t,s)\times\{a,b\}} \vec{e}_1 \cdot \nu_{E \cap(t,s)\times (a,b)}\,d\mathcal{H}^1\,.
\end{align}
Now the last term on the right hand side is bounded by $2(s-t)$ and vanishes as $s\to t$. Also, the third term on the right hand side is bounded by $P(E;(t,s)\times (a,b))$, which vanishes as $s\to t$ since $(t,s)\times (a,b)$ is a decreasing family of bounded open sets whose intersection is empty and $B\to P(E; B)$ is a Radon measure. The limit \eqref{convergence of trace integrals} follows from letting $s$ decrease to $t$.
\par
Moving on to $(iii)$, we recall that for $\mathcal{H}^1$-a.e. $x\in \{u=t\}\cap E_t^+$,
\begin{align}\label{averaging prop of traces}
    1=\lim_{r\to 0}\frac{|B_r(x) \cap E \cap \{u>t \}|}{\pi r^2/2}
\end{align}
and similarly for $E_t^-$ \cite[2.13]{Giu84}. Next, by \eqref{trace difference}, 
    \begin{align}\label{agreement of plus minus}
    \mathcal{H}^1(E_t^+\Delta E_t^-)=0 \quad\textup{ if }P(E;\{u=t\})=0\,,
    \end{align}
which is all but at most countably many $t$. Now, for any $x \in \{ u=t\}$ that is also a Lebesgue point of $E$,
\begin{align}\label{equalities at Lebesgue points}
    1= \lim_{r\to 0}\frac{|B_r(x) \cap E|}{\pi r^2} = \lim_{r\to 0}\frac{|B_r(x) \cap E \cap \{u>t \}|}{\pi r^2/2}=\lim_{r\to 0}\frac{|B_r(x) \cap E \cap \{u<t \}|}{\pi r^2/2}\,.
\end{align}
Since $\mathcal{L}^2$-a.e. $x\in E$ is a Lebesgue point, we conclude from \eqref{averaging prop of traces}, \eqref{agreement of plus minus}, and \eqref{equalities at Lebesgue points} that $\mathcal{H}^1(E_t \Delta E_t^\pm)=0$ for $\mathcal{H}^1$-a.e. $t$. Lastly, \eqref{equivalence of boundaries} when slicing by lines can be found in \cite[Theorem 18.11]{Mag12} for example. The case of slicing by circles follows from the case of lines and the fact that smooth diffeomorphisms preserve reduced boundaries \cite[Lemma A.1]{KinMagStu22}.
\end{proof}

\noindent We will use the following fact regarding the intersection of a set of finite perimeter with a convex set. 

\begin{lemma}\label{convexity lemma}
    If $E$ is a bounded set of finite perimeter and $K$ is a convex set, then
\begin{align}\notag
    P(E \cap K) \leq P(E)\,,
\end{align}
with equality if and only if $|E\setminus  K|=0 $.
\end{lemma}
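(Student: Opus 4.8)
The plan is to obtain both the inequality and the equality case at once from the Cauchy--Crofton integral-geometric formula, which reduces the statement to a one-dimensional fact about slicing by lines. First I would make two harmless reductions: replace $K$ by $\overline{K}$ (valid since $|\overline{K}\setminus K|=0$), and then intersect with a closed ball $\overline{B_R}\supset E$ (valid since $E$ is bounded), so that $K$ may be taken compact and convex without altering $E\cap K$ or $E\setminus K$ up to $\mathcal{L}^2$-null sets. Writing $\ell_{v,t}:=\{x\in\mathbb{R}^2:x\cdot v=t\}$, the engine is the identity
\[
 P(A)=\frac14\int_{\mathbb{S}^1}\int_{\mathbb{R}}\mathcal{H}^0\big(\partial^*A\cap\ell_{v,t}\big)\,dt\,d\mathcal{H}^1(v)\qquad\text{for every bounded set of finite perimeter }A,
\]
which follows by combining the slicing identity $\int_{\mathbb{R}}\mathcal{H}^0(\partial^*A\cap\ell_{v,t})\,dt=\int_{\partial^*A}|v\cdot\nu_A|\,d\mathcal{H}^1$ for $\mathbf{1}_A$ (using that $\partial^*A\cap\ell_{v,t}=\partial^*_{\ell_{v,t}}A_t$ up to an $\mathcal{H}^0$-null set for a.e.\ $t$, exactly as in the proof of Lemma \ref{slicing lemma}) with Fubini's theorem and the computation $\int_{\mathbb{S}^1}|v\cdot e|\,d\mathcal{H}^1(v)=4$ for $e\in\mathbb{S}^1$.

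For the inequality, I would use that for a.e.\ $(v,t)$ the identifications $\partial^*A\cap\ell_{v,t}=\partial^*_{\ell_{v,t}}A_t$ (up to $\mathcal{H}^0$-null) hold for both $A=E$ and $A=E\cap K$ (Lemma \ref{slicing lemma}(iii)), while $(E\cap K)_t=E_t\cap I$ with $I:=K\cap\ell_{v,t}$ a bounded interval. The elementary one-dimensional fact in play is that intersecting a set of finite perimeter $F\subset\mathbb{R}$ with an interval $I$ never increases the number of essential boundary points: the only points that can be created are the (at most two) endpoints of $I$, and each genuinely new endpoint lies in $F^{(1)}$, which forces --- since $F$ is bounded --- the loss of an endpoint of $F$ lying beyond it. Hence $\mathcal{H}^0(\partial^*(E\cap K)\cap\ell_{v,t})\le\mathcal{H}^0(\partial^*E\cap\ell_{v,t})$ for a.e.\ $(v,t)$, and integrating against the formula above gives $P(E\cap K)\le P(E)$.

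The equality case is the heart of the matter. If $|E\setminus K|=0$ then $E\cap K=E$ modulo a null set and equality is clear, so suppose $|E\setminus K|>0$. Since $E^{(1)}$ has full measure in $E$ and $\mathbb{R}^2\setminus K$ is open, I can choose $p\in E^{(1)}\setminus K$ and $\rho>0$ with $B_\rho(p)\cap K=\emptyset$ and $|E\cap B_\rho(p)|>\tfrac{99}{100}\pi\rho^2$. Let $q$ be the nearest point of $K$ to $p$ and $u:=(p-q)/|p-q|$, so that $K\subset\{(x-q)\cdot u\le 0\}$ and $\dist(p,K)=|p-q|\ge\rho$. A short geometric computation, using the boundedness of $K$, yields $\varepsilon>0$ such that every line meeting $B_{\rho/2}(p)$ whose direction lies within angle $\varepsilon$ of $u^{\perp}$ stays in $\{(x-q)\cdot u>0\}$ throughout a ball containing $K$, and hence misses $K$. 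This family of lines has positive $\mathcal{H}^1\times\mathcal{L}^1$-measure, and by Fubini (using the density bound on $E\cap B_\rho(p)$) a positive-measure subfamily meets $E$ in positive length. Along any such line $\ell$, $(E\cap K)\cap\ell=\emptyset$, whereas $E\cap\ell$ is a nonempty bounded set of finite perimeter in $\ell$, so $\mathcal{H}^0(\partial^*(E\cap K)\cap\ell)=0<2\le\mathcal{H}^0(\partial^*E\cap\ell)$. Plugging this strict gain, together with the pointwise inequality on the remaining lines, into the integral formula gives $P(E\cap K)<P(E)$.

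The main obstacle, as indicated, is the rigidity: one cannot simply conclude from $P(E\cap K)=P(E)$ that $\partial^*E\subset K$, because $E$ may protrude from $K$ (for instance along a thin spike) in a way that leaves the slice count unchanged on almost every straight line. The remedy is to probe with the slicing lines that \emph{graze} $\partial K$ near such a protrusion, which is precisely what the separation of $p$ from $K$, the supporting direction $u$, and the compactness of $K$ are arranged to produce; the slicing formula and the one-dimensional claim are otherwise routine.
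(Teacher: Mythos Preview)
Your argument is correct. The Cauchy--Crofton identity you invoke is valid for bounded sets of finite perimeter in $\mathbb{R}^2$, the one-dimensional slice inequality $\mathcal{H}^0(\partial^*(F\cap I))\le\mathcal{H}^0(\partial^*F)$ is easily checked by writing $F$ as a finite union of disjoint intervals, and your treatment of the equality case---producing a positive-measure family of lines that meet $E$ but miss $K$ entirely---is clean and complete. The preliminary reduction to compact convex $K$ is also fine.

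The paper takes a different route: it first establishes the halfspace case $P(E\cap H)\le P(E)$ with equality iff $|E\setminus H|=0$, and then writes a general convex $K$ as an intersection of supporting halfspaces. The inequality follows by iteration (or a limiting argument), and the equality case reduces to the halfspace case by separating a density point of $E\setminus K$ from $K$ with a single hyperplane. Your integral-geometric approach is arguably more self-contained---it delivers the inequality and the rigidity in one stroke without having to manage an intersection of infinitely many halfspaces---while the paper's method is the more classical reduction and avoids invoking the Crofton formula. Both are short once the key observation is in hand; yours has the advantage that the strict inequality in the equality case is visibly quantitative (a positive-measure set of lines contributes a definite gap), which could be useful if one ever needed a stability statement.
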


\begin{proof}
    The argument is based on the facts that the intersection of such $E$ with a halfspace $H$ decreases perimeter (with equality if and only $|H\setminus E|=0$) and any convex set is an intersection of halfspaces. We omit the details. 
\end{proof}

\noindent Our last preliminary regarding sets of finite perimeter is a symmetrization inequality, which for convenience, we state in the setting it will be employed later. 

\begin{lemma}\label{symmetrization lemma}
    Let $Q'=[t_1,t_2]\times [-1,1]$. Suppose that $E\subset Q'$ is a set of finite perimeter such that $(t_1,t_2)\times (-1, -1/4)\subset E^{(1)}\subset (t_1,t_2)\times (-1,1/4)$ and, for some $a_1,a_2\in [-1/4,1/4]$,
\begin{align}\label{trace assumption}
    E_{t_1}^+=[-1,a_1]\,,\quad E_{t_2}^-=[-1,a_2]\quad\textit{up to $\mathcal{H}^1$-null sets}\,,
\end{align}
where $E_{t_1}^+$, $E_{t_2}^-$, viewed as subsets of $\mathbb{R}$, are the traces from the right and left, respectively, slicing by $u(x)=x\cdot e_1$. Then the set $E^h = \{(x_1,x_2):-1 \leq x_2 \leq \mathcal{H}^1(E_{x_1})-1 \}$ satisfies $|E^h|=|E|$,
\begin{align}\label{preserves traces}
    (E^h)^+_{t_1}=[-1,a_1]\,,\quad (E^h)^-_{t_2}=[-1,a_2]\quad\textit{up to $\mathcal{H}^1$-null sets}
\end{align}
and
\begin{align}\label{symmetrization inequality}
    P(E^h;\mathrm{int}\,Q') \leq P(E;\mathrm{int}\,Q')\,.
\end{align}
Moreover, if equality holds in \eqref{symmetrization inequality}, then for every $t\in (t_1,t_2)$, $(E^{(1)})_t$ is an interval. 
\end{lemma}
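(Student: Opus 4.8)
\textbf{Proof proposal for Lemma \ref{symmetrization lemma}.}

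The plan is to treat this as a standard Steiner/vertical-rearrangement argument, being careful about the traces on the two vertical sides of $Q'$ and about the equality case. First I would set up the slicing: by Lemma \ref{slicing lemma}, for a.e. $t\in(t_1,t_2)$ the slice $E_t=E\cap\{x_1=t\}$ is a set of finite perimeter in the vertical line, and $\mathcal H^1((\partial^*E)_t\,\Delta\,\partial^*_{\{x_1=t\}}E_t)=0$. Since $E\subset Q'$ and $(t_1,t_2)\times(-1,-1/4)\subset E^{(1)}$, each such $E_t$ has measure $m(t):=\mathcal H^1(E_t)\in[3/4,5/4]$, so $E^h_t=[-1,m(t)-1]$ is a genuine interval contained in $[-1,1/4]$. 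The identity $|E^h|=|E|$ is then just Fubini in $x_1$. For the trace identity \eqref{preserves traces}: the essential-boundary/trace continuity from Lemma \ref{slicing lemma}(ii) shows that $m(t)\to\mathcal H^1(E^+_{t_1})=\mathcal H^1([-1,a_1])=a_1+1$ as $t\downarrow t_1$ (and likewise at $t_2$), and since for the rearranged set the slices are nested intervals anchored at $-1$, the one-sided trace of $E^h$ at $t_1$ must be $[-1,a_1]$; symmetric reasoning handles $t_2$.

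For the perimeter inequality \eqref{symmetrization inequality}, I would use the BV coarea/slicing decomposition of perimeter. Writing $P(E;\mathrm{int}\,Q')$ via its horizontal and vertical components, the vertical part equals $\int_{t_1}^{t_2}\#\partial^*_{\{x_1=t\}}E_t\,dt$, which for the rearranged set drops to $\int_{t_1}^{t_2}\#\partial^*_{\{x_1=t\}}E^h_t\,dt$ since each $E^h_t$ is a single interval touching the bottom edge, hence contributes at most one boundary point in the interior (the bottom point $x_2=-1$ is not in $\mathrm{int}\,Q'$) versus the at-least-one contributed by $E_t$. The horizontal part is the subtler one: the classical Steiner-symmetrization estimate says the horizontal component of $P(E^h)$, which can be computed from the a.e.-differentiability of $t\mapsto m(t)$ plus a jump part, is controlled by $\int\sqrt{1+m'(t)^2}$-type quantities that are in turn dominated by the full anisotropic perimeter of $E$; the cleanest route is to invoke the standard fact that vertical rearrangement does not increase perimeter (e.g. \cite[Section 14]{Mag12} / the Pólya–Szegő-type inequality for characteristic functions), applied on $\mathrm{int}\,Q'$, and then note that the boundary pieces of $E$ lying on $\{x_2=-1\}$ and on the vertical sides are handled by the trace hypotheses so that no perimeter is created or destroyed on $\partial Q'$ in passing to $E^h$.

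For the equality case, I would argue: if \eqref{symmetrization inequality} is an equality, then equality must hold in both the vertical-slice count and in the Steiner inequality for a.e. $t$. Equality in the rearrangement inequality for the slice $E_t$ forces $E_t$ to be (equivalent to) a single interval for a.e. $t$; this is the classical rigidity statement for one-dimensional symmetrization (a set of finite perimeter in $\mathbb R$ with the same measure and the same number of boundary points as an interval is an interval). Upgrading ``a.e. $t$'' to ``every $t\in(t_1,t_2)$'' for the representative $(E^{(1)})_t$ requires the lower-semicontinuity/trace machinery of Lemma \ref{slicing lemma}(ii): the traces $E^\pm_t$ agree for all but countably many $t$, and at the exceptional $t$ one can still identify $(E^{(1)})_t$ as the limit of neighboring slices (which are intervals anchored at $-1$ with measures converging), hence itself an interval.

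The main obstacle I expect is \emph{not} the inequality \eqref{symmetrization inequality} itself (which is classical) but the careful bookkeeping of the traces: ensuring that the rearrangement is compatible with the prescribed one-sided traces $[-1,a_1]$, $[-1,a_2]$ on the vertical sides $\{x_1=t_1\}$, $\{x_1=t_2\}$ and that the bottom edge $\{x_2=-1\}\subset\partial E$ does not contribute interior perimeter, so that the inequality is genuinely ``for free'' on the vertical component and the only real content is the horizontal Steiner estimate. The second delicate point is promoting the equality-case conclusion from a.e.\ $t$ to every $t$, which is exactly why the statement is phrased in terms of the specific representative $E^{(1)}$ and why Lemma \ref{slicing lemma}(ii) is needed.
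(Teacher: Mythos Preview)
Your handling of the area identity and the trace preservation is essentially the paper's argument: both use Lemma~\ref{slicing lemma}(ii) to pass the slice measure $m(t)$ to the endpoints and then read off the trace of $E^h$ from the fact that its slices are intervals anchored at $-1$.

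The divergence is in the perimeter inequality. You propose a direct horizontal/vertical decomposition and then appeal to ``the standard fact that vertical rearrangement does not increase perimeter (e.g.\ \cite[Section~14]{Mag12}).'' But $E^h$ is \emph{not} the Steiner symmetrization of $E$: Steiner replaces each slice by an interval \emph{centered} on the symmetrization axis, whereas $E^h$ anchors each slice at $x_2=-1$. The reference you cite proves the inequality for the centered rearrangement, and the two operations are not related by a rigid motion (the vertical shift depends on $t$), so the standard result does not apply directly. Your proposed horizontal/vertical splitting also does not give the perimeter: the two ``parts'' you describe correspond to $\int|\nu\cdot e_2|$ and $\int|\nu\cdot e_1|$, which do not add to $P(E;\cdot)$.

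The paper closes this gap with a reflection trick you are missing: let $E^r$ be the reflection of $E$ across $\{x_2=-1\}$ and set $G=E\cup E^r$. Because $(t_1,t_2)\times(-1,-1/4)\subset E^{(1)}$, the two copies glue without creating boundary along $\{x_2=-1\}$, so $P(E;\mathrm{int}\,Q')=\tfrac12 P(G;\{x_1\in(t_1,t_2)\})$. Now the \emph{standard} Steiner symmetrization of $G$ about $\{x_2=-1\}$ satisfies $G^s\cap Q'=E^h$, and the classical inequality $P(G^s)\le P(G)$ gives \eqref{symmetrization inequality} immediately. This reduction is the substantive idea.

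For the equality case, your plan to upgrade ``a.e.\ $t$'' to ``every $t$'' via Lemma~\ref{slicing lemma}(ii) is not quite enough: that lemma gives convergence of the \emph{measures} of one-sided traces, not that $(E^{(1)})_t$ is a limit of neighboring slices as a set. The paper instead invokes \cite[Lemma~4.12]{Fus04}, which is precisely the statement that if a.e.\ vertical slice of a set of finite perimeter is an interval, then every slice of the representative $E^{(1)}$ is an interval.
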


\begin{remark}
    The superscript $h$ is for ``hypograph."
\end{remark}

\begin{figure}
\begin{overpic}[scale=0.5,unit=1mm]{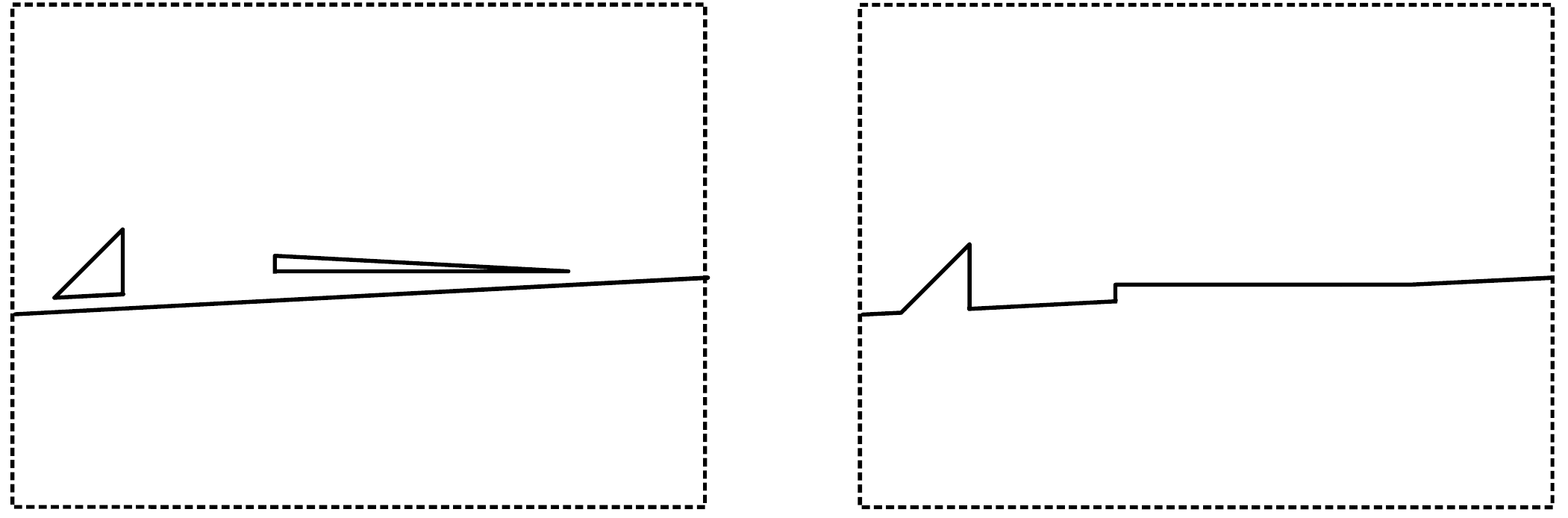}
    \put(10,10){\small{$E$}}
    \put(70,10){\small{$E^h$}}
    \put(-5,25){\small{$Q'$}}
    \put(101,25){\small{$Q'$}}
\end{overpic}
\caption{Both the sets $E$ and $E^h$ have the same trace on $\partial Q'$, and $P(E^h;\mathrm{int}\,Q')<P(E;\mathrm{int}\,Q')$ because $E$ has vertical slices which are not intervals.}\label{symm figure}
\end{figure}

\begin{proof}
The preservation of area $|E^h|=|E|$ is immediate by Fubini's theorem, so we begin with the first equality in \eqref{preserves traces}, and the second is analogous. We recall from \eqref{averaging prop of traces} that for $\mathcal{H}^1$-a.e. $x\in \{t_1\}\times [-1,1]\cap (E^h)_{t_1}^+$,
\begin{align}\label{averaging prop of traces 2}
    1=\lim_{r\to 0}\frac{|B_r(x) \cap E^h \cap Q'|}{\pi r^2/2}\,.
\end{align}
From this property and the fact that the vertical slices of $E^h$ are intervals of height at least $3/4$, it follows that $(E^h)_{t_1}^+$ is $\mathcal{H}^1$-equivalent to an interval $[-1,a]$ for some $a\geq -1/4$. Furthermore, $a=a_1$ is a consequence of \eqref{convergence of trace integrals} and the fact that the rearrangement $E^h$ preserves the $\mathcal{H}^1$-measure of each vertical slice:
\begin{align*}
    a_1 = \int_{\{t_1\}\times [-1,1]} \mathbf{1}_{E_{t_1}^+}\,d\mathcal{H}^1 &= \lim_{s\downarrow t_1}\int_{\{s\}\times [-1,1]} \mathbf{1}_{E_{s}^-}\,d\mathcal{H}^1\\
    &=\lim_{s\downarrow t_1}\int_{\{s\}\times [-1,1]} \mathbf{1}_{(E^h)_{s}^-}\,d\mathcal{H}^1=\int_{\{t_1\}\times [-1,1]} \mathbf{1}_{(E^h)_{t_1}^+}\,d\mathcal{H}^1=a\,.
\end{align*}
\par
Moving on to \eqref{symmetrization inequality}, let consider the sets $E^r$ which is the reflection of $E$ over $\{x_2=-1\}$, and $G=E \cup E^r$.  We denote by the superscript $s$ the Steiner symmetrization of a set over $\{x_2=-1\}$. We note that
\begin{align}\notag
    G^s \cap Q' = E^h\,.
\end{align}
Since $(t_1,t_2)\times (-1, -1/4)\subset E^{(1)}\subset (t_1,t_2)\times (-1,1/4)$ and Steiner symmetrizing decreases perimeter, we therefore have 
\begin{align}\notag
    P(E; \mathrm{int}\,Q') = \frac{P(G;\{x_1\in (t_1,t_2)\})}{2} \geq \frac{P(G^s;\{x_1\in (t_1,t_2)\})}{2}=P(G^s;\mathrm{int}\,Q')=P(E^h;\mathrm{int}\,Q')\,,
\end{align}
which is \eqref{symmetrization inequality}. Furthermore, equality can only hold if every almost every vertical slice of $G$ is an interval, which in turn implies that $E_t$ is an interval for almost every $t\in (t_1,t_2)$. By \cite[Lemma 4.12]{Fus04}, every slice $(E^{(1)})_t$ is an interval.
\end{proof}

We conclude the preliminaries with a lemma regarding of the convergence of convex sets.

\begin{lemma}\label{convex sets convergence lemma}
    If $\{C_n\}$ is a sequence of equibounded, compact, and convex sets in $\mathbb{R}^n$, then there exists compact and convex $C \subset\mathbb{R}^n$ such that $\mathbf{1}_{C_n}\to \mathbf{1}_C$ almost everywhere and%$\mathcal{H}^{n-1}\mres \partial C_n \weakstar \mathcal{H}^{n-1} \mres \partial C$ and
\begin{align}\label{hausdorff convergence of convex sets}
    %\hd (S_\ell^{\delta_j}, S_\ell )
\max \big\{ \sup_{x\in C_n} \dist (x,C)\,,\,\sup_{x\in C} \dist (x,C_n) \big\} \to 0 \,.
\end{align}
\end{lemma}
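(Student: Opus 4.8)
\textbf{Proof proposal for Lemma \ref{convex sets convergence lemma}.}

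The plan is to invoke Blaschke's selection theorem together with the equivalence, on the space of convex bodies, between Hausdorff convergence and $L^1_{\loc}$ convergence of indicator functions. First I would handle the possibility that the $C_n$ are not all nonempty and that some may be ``thin'' (lower-dimensional), since the statement as given allows $C_n=\emptyset$ or $C_n$ a point or segment; in any such degenerate case one can still extract limits, the limit $C$ being possibly empty or lower-dimensional, and \eqref{hausdorff convergence of convex sets} is interpreted with the convention $\sup_\emptyset = 0$ and $\dist(x,\emptyset)=0$ on a bounded set, or one simply restricts attention to the subsequence along which the $C_n$ are nonempty. Assuming then (after passing to a subsequence) that the $C_n$ are nonempty compact convex subsets of a fixed ball $\ov{B}_R$, Blaschke's selection theorem produces a subsequence and a nonempty compact convex set $C$ with $C_n \to C$ in the Hausdorff metric, which is exactly \eqref{hausdorff convergence of convex sets}.

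Next I would upgrade Hausdorff convergence to $\mathbf{1}_{C_n}\to\mathbf{1}_C$ almost everywhere. The key point is that a convex body $C$ has $|\partial C|=0$ (its boundary is a Lipschitz graph locally, or more simply $C$ is a bounded convex set so $\partial C$ is $\mathcal{H}^{n-1}$-rectifiable, hence $\mathcal{L}^n$-null). For $x$ in the interior of $C$, we have $\dist(x,\R^n\setminus C)=:\rho>0$, and Hausdorff convergence gives $C\subset C_n + B_{\e}$ is \emph{not} quite what is needed; rather one uses that Hausdorff convergence of convex bodies implies, for each compact $K\subset \mathrm{int}\,C$, that $K\subset C_n$ for $n$ large (since $\sup_{y\in C}\dist(y,C_n)\to 0$ forces $C_n$ to eventually fill out any interior point — more carefully, if $x\in\mathrm{int}\,C$ then a small ball $B_\rho(x)\subset C$, and Hausdorff convergence implies $C_n\to C$ so that $C_n\supset B_{\rho/2}(x)$ for large $n$ by a standard compactness argument on the convex bodies $C_n$). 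Symmetrically, for $x\notin C$ with $\dist(x,C)>0$, we get $x\notin C_n$ for $n$ large because $\sup_{z\in C_n}\dist(z,C)\to 0$. Hence $\mathbf{1}_{C_n}(x)\to\mathbf{1}_C(x)$ for every $x\notin \partial C$, and since $|\partial C|=0$ this gives a.e.\ convergence.

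Finally, to get the conclusion for the \emph{whole} sequence and not merely a subsequence, I would argue by contradiction: if \eqref{hausdorff convergence of convex sets} failed along the full sequence, there would be a subsequence along which the left-hand side stays bounded below by some $\eta>0$; applying the selection argument above to that subsequence yields a further subsequence converging in Hausdorff distance to some convex $C'$, contradicting the lower bound. The almost-everywhere convergence $\mathbf{1}_{C_n}\to\mathbf{1}_C$ then follows as above once the limit $C$ is identified, using that the Hausdorff limit is unique. The main obstacle, and the only place requiring care, is the bookkeeping around degenerate (empty or lower-dimensional) members of the sequence and the precise deduction that interior points of the limit are eventually contained in $C_n$; both are routine once one fixes conventions, so I would state these conventions explicitly at the start of the proof and then cite Blaschke selection and the standard fact $|\partial C|=0$ for convex $C$.
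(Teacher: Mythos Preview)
Your core approach is correct and essentially the same as the paper's: the paper extracts a compact limit by applying Arzel\`a--Ascoli to the distance functions $\dist(\cdot,C_n)$, deduces Kuratowski (hence, by equiboundedness, Hausdorff) convergence, and then cites \cite{FFLM} for the almost-everywhere convergence of indicators. Your use of Blaschke selection is just a repackaging of the same compactness, and your direct argument for $\mathbf{1}_{C_n}\to\mathbf{1}_C$ a.e.\ via $|\partial C|=0$ is a correct and slightly more self-contained substitute for the citation. The step ``$x\in\mathrm{int}\,C \Rightarrow x\in C_n$ eventually'' does need convexity (pick a simplex in $C$ with $x$ in its interior; Hausdorff convergence moves the vertices into $C_n$ only slightly, and the perturbed simplex, contained in $C_n$ by convexity, still contains $x$); you gesture at this but should state it cleanly.

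Your final paragraph, however, is both unnecessary and incorrect. The lemma is a \emph{subsequential} statement (and the paper proves it as such): there is no reason an arbitrary equibounded sequence of convex bodies should have a unique Hausdorff limit, so you cannot conclude convergence of the full sequence. Your contradiction argument fails precisely here: the further subsequence you extract converges to some $C'$, but nothing forces $C'=C$, so there is no contradiction with the lower bound $\eta$. Simply drop that paragraph and state the conclusion for a subsequence. The discussion of degenerate (empty or lower-dimensional) $C_n$ is harmless but not needed for the paper's application.
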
 

\begin{proof}
By the Arzel\'{a}-Ascoli Theorem, there exists a compact set $C\subset\mathbb{R}^n$ such that $\dist(\cdot, C_n)\to \dist(\cdot,C)$ uniformly. Therefore, $C_n\to C$ in the Kuratowski sense \cite[Section 2]{FFLM}, $C$ is convex, and $\mathbf{1}_{C_n}\to \mathbf{1}_C$ almost everywhere \cite[Remark 2.1]{FFLM}. Since $C_n$ are equibounded and $C$ is compact, the Kuratowski convergence is equivalent to Hausdorff convergence, which is \eqref{hausdorff convergence of convex sets}.\end{proof}

\section{Existence of Minimizers}\label{sec:existence of minmizers}

First we establish the existence of minimizers for the problem \eqref{ball problem} on the ball. A byproduct of the proof is a description of minimizers on each of the circular segments from \eqref{circular segments}; see Fig. \ref{polygon figure}.

\begin{theorem}[Existence on the ball]\label{existence theorem}
For any $\delta \geq 0$ and $h\in BV(\partial B;\{1,\dots,N\})$, there exists a minimizer of $\mathcal{F}$ among the class $\mathcal{A}_\delta^h$. Moreover, any minimizer $\S^\delta$ for $\delta \geq 0$ satisfies
\begin{align}\label{circular segment containment}
\cup_{i=1}^{I_\ell} C_i^\ell \subset S_\ell^{(1)}\quad\textit{for each }1\leq \ell \leq N\,.%\,,\quad \cup_{j=1}^J C_j^2 \subset S_2\,,\quad \textit{and}\quad \cup_{k=1}^K C_k^3 \subset S_3\,.
\end{align}
\end{theorem}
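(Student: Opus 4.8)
The plan is to handle the two assertions — existence and the structural description \eqref{circular segment containment} — by the direct method followed by a competitor argument. For \emph{existence}, I would first observe, using the decomposition \eqref{energy independent of boundary}, that minimizing $\mathcal{F}$ over $\mathcal{A}_\delta^h$ is equivalent to minimizing $\mathcal{F}(\cdot;B)$, and fix a minimizing sequence $\{\S^k\}=\{(S_0^k,\dots,S_N^k,G^k)\}$. Since $S_0^k=\mathbb{R}^2\setminus B$ is fixed, all the action is inside $B$, which is bounded, so the perimeters $P(S_\ell^k;B)$ are uniformly bounded and hence the total variations $|D\mathbf{1}_{S_\ell^k}|(\mathbb{R}^2)$ are uniformly bounded (the boundary term contributes a fixed amount $\mathcal{H}^1(\partial B)$). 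By BV compactness I extract a subsequence with $\mathbf{1}_{S_\ell^k}\to \mathbf{1}_{S_\ell^\delta}$ in $L^1_{\mathrm{loc}}$ for $1\leq \ell\leq N$, set $G^\delta=\mathbb{R}^2\setminus\cup_{\ell=0}^N S_\ell^\delta$, and note that a.e.\ convergence is preserved under finite unions so the $\S^\delta$ are still a partition of $\mathbb{R}^2$. Lower semicontinuity of perimeter under $L^1_{\mathrm{loc}}$ convergence gives $\mathcal{F}(\S^\delta;B)\leq \liminf_k \mathcal{F}(\S^k;B)$; the trace constraint \eqref{trace constraint} passes to the limit because traces are continuous along $L^1$-convergent sequences of sets with bounded perimeter on one side of $\partial B$ (or, equivalently, because $S_0^\delta = \mathbb{R}^2\setminus B$ forces the interior traces of $\cup_\ell S_\ell^\delta$ to fill $\partial B$ and the individual traces are lower semicontinuous); and the volume constraint $|G^\delta|\leq \delta$ together with $(|S_1^\delta|,\dots)$ being the right values (on the ball there is no area constraint, so this point is vacuous and only $|G^\delta|\le\delta$ must be checked, which follows from $|G^\delta\cap B|\le \liminf|G^k\cap B|$ by a.e.\ convergence). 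Hence $\S^\delta\in\mathcal{A}_\delta^h$ is a minimizer.

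For the \emph{structural statement} \eqref{circular segment containment}, the idea is a cut-and-replace competitor argument using Lemma \ref{convexity lemma}. Fix a minimizer $\S^\delta$, an index $\ell$, and a circular segment $C_i^\ell$ (bounded by the arc $a_i^\ell\subset\partial B$ and its chord $c_i^\ell$). On $a_i^\ell$ the trace constraint forces the trace of $S_\ell^\delta$ to be all of $a_i^\ell$. I would define a competitor $\widetilde{\S}$ by enlarging $S_\ell^\delta$ to contain $C_i^\ell$ and removing $C_i^\ell$ from all the other chambers and from $G^\delta$; concretely, $\widetilde{S}_\ell = S_\ell^\delta\cup C_i^\ell$, $\widetilde{S}_m = S_m^\delta\setminus C_i^\ell$ for $m\neq\ell$, $\widetilde{G}=G^\delta\setminus C_i^\ell$. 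This is still a partition of $\mathbb{R}^2$; it still satisfies \eqref{trace constraint} since we have only changed things strictly inside $B$ on the open set $C_i^\ell$; and $|\widetilde{G}|\le|G^\delta|\le\delta$. The key point is the energy comparison: removing the convex set $C_i^\ell$ from each $S_m^\delta$ ($m\neq\ell$) does not increase $P(S_m^\delta)$ by Lemma \ref{convexity lemma}, while for $S_\ell^\delta$ one checks that adding $C_i^\ell$ replaces the portion of $\partial^* S_\ell^\delta$ inside $\overline{C_i^\ell}$ by (a subset of) the chord $c_i^\ell$ — and the chord is the shortest curve in $\overline{C_i^\ell}$ with those endpoints joining the two arcs of $\partial B$, so $P(\widetilde{S}_\ell)\le P(S_\ell^\delta)$ as well, with strict inequality unless $\partial^* S_\ell^\delta\cap C_i^\ell$ already coincided with the chord or was empty. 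Summing against the positive weights $c_m$ gives $\mathcal{F}(\widetilde{\S})\le\mathcal{F}(\S^\delta)$; minimality forces equality, and the equality cases (the ``if and only if $|E\setminus K|=0$" clause of Lemma \ref{convexity lemma} for the other chambers, plus the chord-minimality rigidity for $S_\ell^\delta$) force $|S_m^\delta\cap C_i^\ell|=0$ for all $m\neq\ell$ and $|G^\delta\cap C_i^\ell|=0$, i.e.\ $C_i^\ell\subset (S_\ell^\delta)^{(1)}$ up to null sets, which upon choosing the good representative \eqref{boundary convention 1}--\eqref{boundary convention 2} gives $C_i^\ell\subset S_\ell^{(1)}$.

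The main obstacle I anticipate is making the perimeter bookkeeping for $\widetilde{S}_\ell$ fully rigorous: intersecting with the \emph{complement} of a convex set is not directly covered by Lemma \ref{convexity lemma}, so I would instead phrase the $S_\ell^\delta$-modification as $\widetilde{S}_\ell = (S_\ell^\delta\setminus \overline{C_i^\ell})\cup C_i^\ell$ and compute $P(\widetilde{S}_\ell) = P(S_\ell^\delta;\mathbb{R}^2\setminus\overline{C_i^\ell}) + \mathcal{H}^1(\partial^* \widetilde S_\ell\cap\partial C_i^\ell) + P(\widetilde S_\ell;C_i^\ell)$ and use that on $\partial C_i^\ell = a_i^\ell\cup c_i^\ell$ the arc part $a_i^\ell$ is forced by the trace constraint to be common boundary of $S_\ell^\delta$ too, so the only genuine change is trading $P(S_\ell^\delta;C_i^\ell) + (\text{trace mismatch along }c_i^\ell)$ for $\mathcal{H}^1(c_i^\ell)$; an isoperimetric-type argument inside the convex cap $C_i^\ell$ (again Lemma \ref{convexity lemma} applied to $S_\ell^\delta\cap C_i^\ell$ relative to $C_i^\ell$, or a direct slicing estimate) shows this is non-increasing. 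The rest — BV compactness, lower semicontinuity, passage to the limit of the constraints — is routine. Once \eqref{circular segment containment} is established it pins down the minimizer on each cap $C_i^\ell$ exactly, which is the description advertised before the theorem and is what Figure \ref{polygon figure} depicts.
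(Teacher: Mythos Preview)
Your overall strategy is right, but there is a genuine gap in the existence argument, and the paper's proof is organized precisely to avoid it.

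The claim that the trace constraint \eqref{trace constraint} passes to the limit under $L^1$ convergence with bounded perimeter is false. Traces are \emph{not} continuous under $L^1_{\mathrm{loc}}$ convergence of uniformly bounded $BV$ functions; a standard example is a thin annulus $B\setminus B_{1-1/k}$, which converges to the empty set in $L^1$ while its inner trace on $\partial B$ is identically $1$. Your alternative justification (``individual traces are lower semicontinuous'') does not repair this. So as written, your limiting cluster $\S^\delta$ need not lie in $\mathcal{A}_\delta^h$.

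The paper fixes this by reversing the order of your two steps. In Step one it shows, for an \emph{arbitrary} $\S\in\mathcal{A}_\delta^h$ (not just a minimizer), that the modified cluster $\tilde\S$ obtained by forcing $\cup_i C_i^\ell\subset \tilde S_\ell$ for every $\ell$ satisfies $\tilde\S\in\mathcal{A}_\delta^h$ and $\mathcal{F}(\tilde\S)\le\mathcal{F}(\S)$, with equality iff \eqref{circular segment containment} already held. This simultaneously yields the structural statement for minimizers \emph{and} gives a tool: apply it to a minimizing sequence so that every $\S^m$ satisfies $\cup_i C_i^\ell\subset (S_\ell^m)^{(1)}$. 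This containment is an $L^1$-closed condition, so it survives the limit, and it forces the trace \eqref{trace constraint} on the limit cluster. That is how the trace issue is handled.

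On the $S_\ell$ perimeter bookkeeping you flagged as your ``main obstacle'': your instinct that Lemma \ref{convexity lemma} does not directly apply to $S_\ell^\delta\cup C_i^\ell$ is correct, and your proposed direct computation on $\partial C_i^\ell$ is workable but fiddly. The paper's device is cleaner: use $P(S_1;B)=P(B\setminus S_1;B)$ via \eqref{complement equality}, note that $B\setminus S_1=\cup_{\ell\ge2}S_\ell\cup G$ has zero trace on $a_i^1$, and then apply Lemma \ref{convexity lemma} to this \emph{complement} intersected with the convex set $K_1=\cl(B\setminus\cup_i C_i^1)$. This reduces the $S_\ell$ estimate to the same convexity lemma you already used for the other chambers, and the equality case of Lemma \ref{convexity lemma} gives \eqref{circular segment containment} directly.
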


%\begin{figure}
%\includegraphics[scale=.5]{pdfresizer.com-pdf-crop (1).pdf}
%\caption{A component $S_\ell^\delta$ of the triple obtained in Theorem \ref{existence theorem} minimizing $\mathcal{F}$ among $\mathcal{A}_\delta^h$ must include the colored regions bounded by the arcs comprising $\{h=\ell\}$ and their corresponding chords. It is not necessary that the chords bounding those regions belong to $\partial^* S_\ell^\delta$, merely that $(S_\ell^\delta)^{(1)}$ includes those regions.}\label{polygon figure}
%\end{figure}

\begin{proof}The proof is divided into two steps. The closed convex sets
%\end{align*}
\begin{align*}
    K_\ell:= \cl (B \setminus (\cup_{i=1}^{I_\ell} C_i^\ell ))\,, \quad 1\leq \ell \leq N%\,\, 
   %K_2:= \cl (B \setminus (\cup_{j=1}^I C_j^2 ))\,, \,\, 
    %K_3:= \cl (B \setminus (\cup_{k=1}^I C_k^3 ))\,.
\end{align*}
will be used throughout.\par
\medskip
\noindent\textit{Step one}: First we show that given any $\S\in \mathcal{A}_\delta^h$, the cluster $\tilde{\S}$ defined via
\begin{align*}
    \tilde{S}_\ell &:= \Big(S_\ell \cap \bigcap_{j\neq \ell} K_j \Big)\cup \bigcup_{i=1}^{I_\ell} C_i^\ell \quad 1\leq \ell \leq N\,,\qquad \tilde{S}_0=B^c\,, \qquad \tilde{G}=(\tilde{S}_0\cup \dots \cup \tilde{S}_N)^c%\,,\,\, \tilde{S}_\ell := (S_2 \cap K_1 \cap K_3)\cup \bigcup_{i=1}^{I_2} C_i^2\,,\\
    %\tilde{S}_3 &:= (S_3 \cap K_1 \cap K_2)\cup \bigcup_{i=1}^{I_3} C_i^3 
\end{align*}
satisfies $\tilde{\S}\in \mathcal{A}_\delta^h$ and
\begin{align}\label{less energy}
    \mathcal{F}(\tilde{\S}) \leq \mathcal{F} (\S)\,,
\end{align}
\begin{figure}
\begin{overpic}[scale=0.3,unit=1mm]{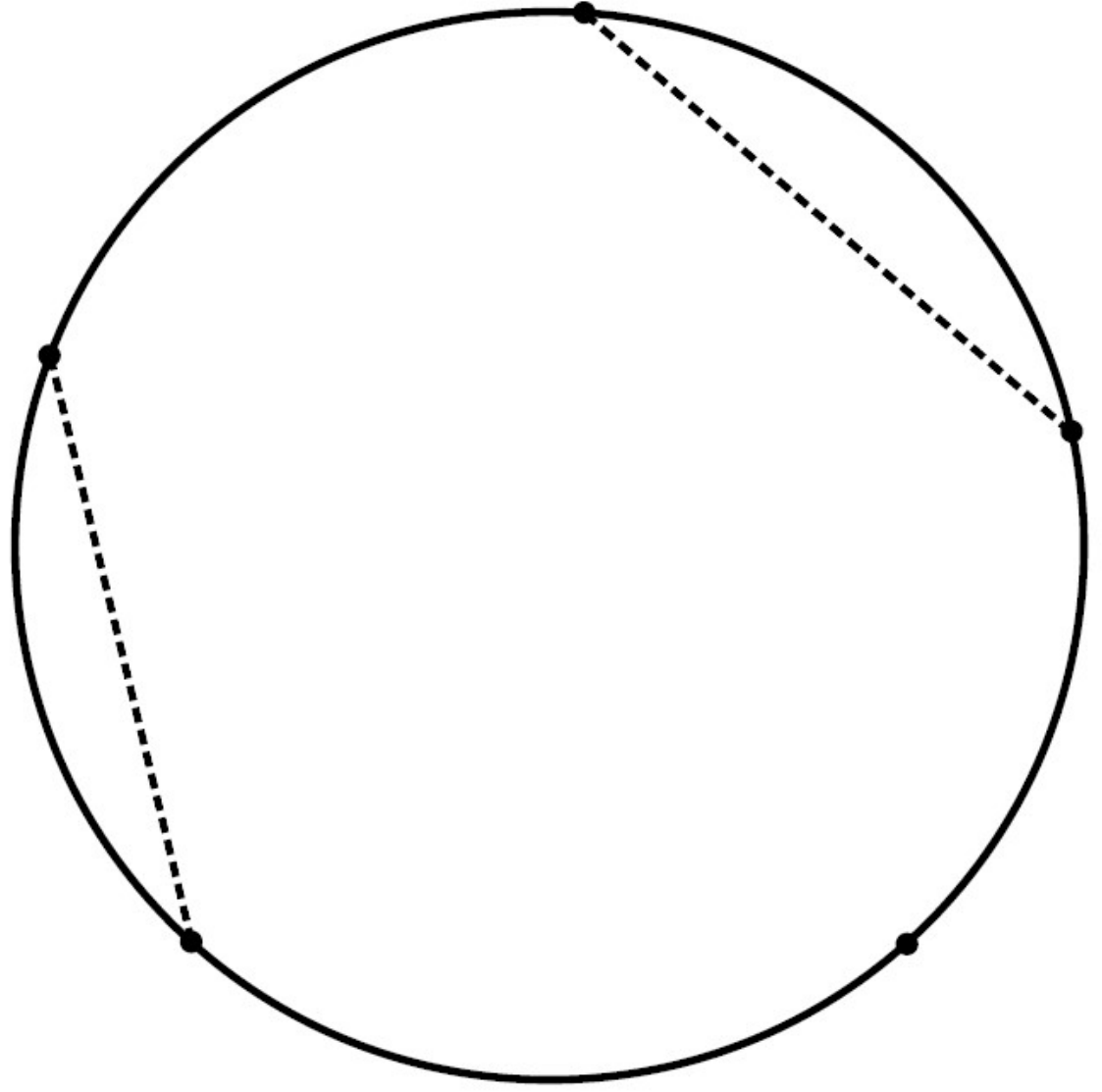}
    \put(-8,33){\small{$a_1^1$}}
    \put(4,10){\small{$x_1$}}
    \put(86,10){\small{$x_2$}}
    \put(100,55){\small{$x_3$}}
    \put(-10,62){\small{$x_5$}}
    \put(48,102){\small{$x_4$}}
    \put(85,84){\small{$a_2^1$}}
    \put(12,40){\small{$c_1^1$}}
    \put(64,72){\small{$c_2^1$}}
\end{overpic}
\caption{Here $h$ jumps at $x_i$, $1\leq i\leq 5$, and $\{h=1\}$ on the arcs $a_1^1$ and $a_2^1$. Equation \eqref{circular segment containment} states that for any minimizer $\S$ with this boundary data, $S_1^{(1)}$ must contain the regions bounded by $a_j^1$ and the chords $c_j^1$ for $j=1,2$.}\label{polygon figure}
\end{figure}
%and
%\begin{align}\label{containment}
%\cup_{i=1}^I C_i^1 \subset \tilde{S}_1\,,\quad \cup_{j=1}^J C_j^2 \subset \tilde{S}_2\,,\quad \cup_{k=1}^K C_k^3 \subset \tilde{S}_3\,;
%\end{align}
with equality if only if
\begin{align}\label{containment}
\cup_{i=1}^{I_\ell} C_i^\ell \subset S_\ell^{(1)}\quad\quad  \forall 1\leq \ell \leq N\,.%\,,\quad \cup_{j=1}^J C_j^2 \subset S_2\,,\quad \cup_{k=1}^K C_k^3 \subset S_3\,.
\end{align}
The proof relies on Lemma \ref{convexity lemma}, which states that if $E$ is a set of finite perimeter, $|E|<\infty$, and $K$ is a closed convex set, then $E \cap K$ is a set of finite perimeter and
\begin{align}\label{convexity inequality}
    P(E \cap K) \leq P(E)\,,
\end{align}
with equality if and only if $|E\setminus K|=0$. 
%\begin{align*}
%    K_1&:= \cl \Big(B \setminus \Big(\bigcup_{j=1}^J C_j^2 \cup \bigcup_{k=1}^K C_k^3 \Big) \Big)\,, \\ 
%    K_2&:= \cl \Big(B \setminus \Big(\bigcup_{i=1}^J C_i^1 \cup \bigcup_{k=1}^K C_k^3 \Big) \Big)\,, \\ 
%    K_3&:= \cl \Big(B \setminus \Big(\bigcup_{j=1}^J C_j^2 \cup \bigcup_{k=1}^K C_k^3 \Big) \Big)\,.
For given $\S \in \mathcal{A}_\delta^h$, let us first consider the cluster $\S'$, where
\begin{align*}
    S_1' := S_1 \cup \bigcup_{i=1}^{I_1} C_i^1\,,\,\, S_\ell' := S_\ell \cap K_1\,,\,\, 2\leq \ell \leq N\qquad S'_0=B^c\,, \qquad G'=(S_0'\cup \dots \cup S_N')^c\,.
\end{align*}
By the trace condition \eqref{trace constraint} and the definition of $S_\ell'$,
\begin{align}\label{trace constraint 2}
    S_\ell' \cap \partial B = \{x\in \partial B : h(x) = \ell \}\textup{ for $1\leq \ell \leq N$ in the sense of traces}\,.
\end{align}
Also, since $ G'= B \setminus \cup_\ell S_\ell'$ satisfies
\begin{align*}
    | G'| =  |(B\cap K_1) \setminus \cup_\ell S_\ell| \leq |B \setminus \cup_\ell S_\ell|\leq \delta\,,
\end{align*}
we have
\begin{equation}\label{remains in A delta}
\S' \in \mathcal{A}_\delta^h\,.
\end{equation}
Now for $2\leq \ell\leq N$, we use \eqref{convexity inequality} to estimate
\begin{align}\label{23 estimate}
   c_\ell P(S_\ell)
    \geq c_\ell P(S_\ell \cap K_1) =
     c_\ell P(S_\ell')\,.
\end{align}
For $\ell=1$, we first recall the fact for any set of finite perimeter $E$,
\begin{equation}\label{complement equality}
P(E;B) = P(E^c ; B)\,.
\end{equation}
Applying \eqref{complement equality} with $S_1$, then \eqref{trace constraint}, \eqref{convexity inequality}, and \eqref{trace constraint 2}, and finally \eqref{complement equality} with $S_1'$, we find that
\begin{align}\notag
   P(S_1;B) &= P((\cup_{\ell=2}^N S_\ell \cup G) ; B) \\ \notag
    &= P(\cup_{\ell=2}^N S_\ell \cup G) - \mathcal{H}^1(\cup_{\ell=2}^N A_\ell) \\ \notag
    &\geq P((\cup_{\ell=2}^N S_\ell \cup G) \cap K_1) - \mathcal{H}^1(\cup_{\ell=2}^N A_\ell) \\ \notag
    %&= P(S_2' \cup S_3' \cup G') - \mathcal{H}^1(A_2 \cup A_3) + \mathcal{H}^1(A_\ell)\\ \notag
    &= P(\cup_{\ell=2}^N S_\ell' \cup G'; B) \\ \label{1 estimate}
    &= P(S_1' ; B)\,.
\end{align}
Adding $\mathcal{H}^1(A_\ell)$ to \eqref{1 estimate}, multiplying  
by $c_1$, and combining with \eqref{23 estimate} gives
\begin{align}\label{first projection inequality}
    \mathcal{F}(\S)=\sum_{\ell=0}^N c_\ell P(S_\ell) \geq \sum_{\ell=0}^N c_\ell P(S_\ell')=\mathcal{F} (\S')\,,
\end{align}
and so we have a new cluster $\S'$, belonging to $\mathcal{A}_\delta^h$ by \eqref{remains in A delta}, that satisfies
\begin{align}\label{first step containment}
    \cup_{i=1}^{I_1} C_i^1 \subset (S_1')^{(1)}\,.
\end{align}
Repeating this argument for $2\leq\ell\leq N$ yields $\tilde{\S}\in\mathcal{A}_\delta^h$ satisfying \eqref{less energy} as desired. Turning now towards the proof that equality in \eqref{less energy} implies \eqref{containment}, we prove that the containment for $\ell=1$ in \eqref{containment} is entailed by equality; the other $N-1$ implications are analogous. If \eqref{less energy} holds as an equality, then \eqref{23 estimate} and \eqref{1 estimate} must hold as equalities as well. But by the characterization of equality in \eqref{convexity inequality}, this can only hold if $(\cup_{\ell=2}^N S_\ell \cup G) \cap K_1 = \cup_{\ell=2}^N S_\ell \cup G$, which yields the first containment in \eqref{containment}.\par
Finally, let us also remark that an immediate consequence of this step is that if a minimizer of $\mathcal{F}$ exists among $\mathcal{A}_\delta^h$, then \eqref{circular segment containment} must hold. It remains then to prove the existence of a minimizer.\par
\medskip
%Consider the circular segment $C_1^1$ bounded by $a_1^1$ and $c_1^1$. Let us rotate the disk so that the chord $c_1^1=\{x_2 = \alpha \}$ for some $\alpha\in (-1,1)$ and $C_1^1\subset \{x_2\geq \alpha\}$. 
\noindent\textit{Step two}:
Let $\{\S^m\}_m$ be a minimizing sequence of clusters for $\mathcal{F}$ among $\mathcal{A}_\delta^h$ (the infimum is finite). Due to the results of step one, we can modify our minimizing sequence so that
\begin{align}\label{segment containment for existence}
    \cup_{i=1}^{I_\ell} C_i^\ell \subset (S_{\ell}^m)^{(1)}\quad \forall m\,,\,\, \forall 1\leq \ell \leq N%\,,\quad \cup_{j=1}^J C_j^2 \subset S_{2,m}\,,\quad \cup_{k=1}^K C_k^3 \subset S_{3,m}\quad\forall m\,
\end{align}
while also preserving the asymptotic minimality of the sequence. By compactness in $BV$ and \eqref{segment containment for existence}, after taking a subsequence, we obtain a limiting cluster $\S$ that satisfies the trace condition \eqref{trace constraint}, and, by lower-semicontinuity in $BV$, minimizes $\mathcal{F}$ among $\mathcal{A}_\delta^h$.  %Indeed, by the trace condition,
%\begin{align*}
%    \sum_{\ell=1}^3 c_\ell \mathcal{H}^1(\partial^* S_{\ell,j} \cap B) + \sum_{\ell=1}^3 c_\ell \mathcal{H}^1(\{ h(x) = \ell\}) &= \sum_{\ell=1}^3 c_\ell P(S_{\ell,j}) \\ 
%    &\geq \sum_{\ell=1}^3 c_\ell P(S_{\ell,j} \cap \{x\in B: x\in T_m\,, \alpha_m \neq \ell \})
%\end{align*}
\end{proof}

\begin{remark}[Existence of minimizer for a functional with boundary energy]
    One might also consider the minimizing the energy
\begin{align*}
    \mathcal{F}(\S;B) + \sum_{m=1}^N\sum_{\ell\neq m} (c_\ell + c_m)\mathcal{H}^1(\partial^* S_\ell \cap \{h=m\})\,,
\end{align*}
which penalizes deviations from $h$ rather than enforcing a strict trace condition, among the class
\begin{align}\notag
  \{(S_0,\dots,S_N,G) : |S_\ell \cap S_m|=0 \textup{ if $\ell \neq m$, }|G|\leq \delta,\, B^c = S_0\}\,.
\end{align}
For this problem, the same convexity-based argument as in step one of the proof of Theorem \ref{existence theorem} shows that in fact, minimizers exists and attain the boundary values $h$ $\mathcal{H}^1$-a.e. on $\partial B$. When $\delta=0$, this problem arises as the $\Gamma$-limit of a Modica-Mortola problem with Dirichlet condition \cite{SS}.
\end{remark}

Next, we prove existence for the problem on all of space. Since we are in the plane, the proof utilizes the observation that perimeter and diameter scale the same in $\mathbb{R}^2$. Existence should also hold in $\mathbb{R}^n$ for $n\geq 3$ using the techniques of \cite{Alm76}.

\begin{theorem}[Existence on $\mathbb{R}^2$]\label{existence on all of space}
    For any $\mm\in (0,\infty)^N$, there exists $R=R(\mm)$ such that for all $\delta \geq 0$, there exists a minimizer of $\mathcal{F}$ among the class $\mathcal{A}_\delta^{\mm}$ satisfying $\mathbb{R}^2\setminus B_R \subset S_0$. 
\end{theorem}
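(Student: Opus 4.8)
The plan is a direct-method argument whose only non-routine ingredient is a confinement step, powered by the coincidence in $\mathbb{R}^2$ between the scalings of perimeter and diameter, which replaces an arbitrary minimizing sequence by one uniformly contained in a ball $B_R$ with $R$ depending only on $\mm$ (and the fixed weights $c_\ell$). First I would fix once and for all the competitor consisting of $N$ pairwise disjoint disks of areas $m_1,\dots,m_N$, empty $G$, and $S_0$ their complement; it lies in $\mathcal{A}_\delta^{\mm}$ for every $\delta\ge 0$, so $\inf_{\mathcal{A}_\delta^{\mm}}\mathcal{F}\le\Lambda_0=\Lambda_0(\mm)<\infty$ uniformly in $\delta$. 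Hence, along any minimizing sequence $\S^m$, we may assume $\mathcal{F}(\S^m)\le\Lambda_0+1$, which yields the uniform perimeter bounds $P(S_\ell^m)\le(\Lambda_0+1)/c_\ell$ for all $\ell$; in particular, writing $W^m:=\cup_{\ell=1}^N S_\ell^m\cup G^m=(S_0^m)^c$ for the bounded part of the cluster, $P(W^m)=P(S_0^m)\le(\Lambda_0+1)/c_0=:P_0$, a bound independent of $\delta$ and $m$.

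For the confinement step I would decompose $W^m$ into its at most countably many indecomposable components $\{V_j\}_j$, so that $P(W^m)=\sum_j P(V_j)$ and distinct components meet only in $\mathcal{H}^1$-null sets, and use the planar fact that an indecomposable set of finite perimeter is bounded with $\diam(V_j)\le\tfrac12 P(V_j)$: projecting $V_j$ onto any line, indecomposability forces its essential projection to be an interval along whose interior almost every slice is nonempty, hence has at least two boundary points, so the interval has length at most $\tfrac12 P(V_j)$. Therefore $\sum_j\diam(V_j)\le\tfrac12 P(W^m)\le P_0/2$, so the components have a total diameter budget bounded independently of $\delta$ and $m$, and one can translate each $V_j$ rigidly --- carrying along the pieces $V_j\cap S_\ell^m$ and $V_j\cap G^m$ --- so that the translates occupy pairwise disjoint boxes stacked side by side inside a fixed ball $B_R$, $R=R(P_0)=R(\mm)$. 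Calling the resulting cluster $\tilde{\S}^m$: translations preserve all areas, so $|\tilde S_\ell^m|=m_\ell$ and $|\tilde G^m|=|G^m|\le\delta$; and since the $V_j$ (hence also the $V_j\cap S_\ell^m$) are pairwise non-adjacent, $P(S_\ell^m)=\sum_j P(S_\ell^m\cap V_j)$, while pieces sitting in disjoint boxes contribute additively to perimeter, so $P(\tilde S_\ell^m)=\sum_j P(V_j\cap S_\ell^m)=P(S_\ell^m)$ for every $\ell$, including $\ell=0$ via $\tilde S_0^m=(\cup_j(V_j+\tau_j))^c$. Thus $\tilde{\S}^m\in\mathcal{A}_\delta^{\mm}$, $\mathcal{F}(\tilde{\S}^m)=\mathcal{F}(\S^m)$, and $\mathbb{R}^2\setminus B_R\subset\tilde S_0^m$ for every $m$.

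The rest is routine: $\{\mathbf{1}_{\tilde S_\ell^m}\}_m$ for $1\le\ell\le N$ and $\{\mathbf{1}_{\tilde G^m}\}_m$ are bounded in $BV$ and supported in $\overline{B_R}$, so a subsequence converges in $L^1$ to characteristic functions of pairwise disjoint sets $S_1,\dots,S_N,G\subset\overline{B_R}$ with $|S_\ell|=m_\ell$ and $|G|\le\delta$; setting $S_0:=(\cup_{\ell\ge1}S_\ell\cup G)^c\supset\mathbb{R}^2\setminus B_R$ (inflating $R$ by $1$ to pass to the open ball if desired) gives $\S\in\mathcal{A}_\delta^{\mm}$ with the required inclusion, and lower semicontinuity of perimeter under $L^1$ convergence yields $\mathcal{F}(\S)\le\liminf_m\mathcal{F}(\tilde{\S}^m)=\inf_{\mathcal{A}_\delta^{\mm}}\mathcal{F}$, so $\S$ is a minimizer of the desired form.

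The main obstacle is the confinement step of the second paragraph. Conceptually it rests on the two-dimensional fact that an indecomposable set of finite perimeter is bounded with $\diam\le\tfrac12 P$; this is exactly where $P(\lambda E)=\lambda P(E)$ and $\diam(\lambda E)=\lambda\diam(E)$ scaling the same way is used, and it is false for $n\ge 3$ (which is why existence in $\mathbb{R}^n$ would instead need Almgren-type truncation). Technically, the delicate point is that the translate-and-repack operation must not increase $\mathcal{F}$ at all, which forces one to check that the indecomposable decomposition of $W^m$ interacts additively with each chamber, i.e. $P(S_\ell^m)=\sum_j P(S_\ell^m\cap V_j)$, using that distinct components meet only in $\mathcal{H}^1$-null sets.
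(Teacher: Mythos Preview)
Your proposal is correct and follows essentially the same route as the paper's proof: both take the complement $W^m=(S_0^m)^c$, decompose it into indecomposable components via \cite{AMMN01}, use the planar fact that an indecomposable set has diameter controlled by its perimeter, translate the components into a fixed ball of radius $R=R(\mm)$, and then run the direct method. Your write-up is in fact more explicit than the paper's on several points (the competitor giving the $\delta$-uniform energy bound, the projection argument for $\diam(V_j)\le\tfrac12 P(V_j)$, and the additivity check $P(S_\ell^m)=\sum_j P(S_\ell^m\cap V_j)$ needed to ensure translation does not increase $\mathcal{F}$).
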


\begin{proof}
Let $\{\S^j\}_j\subset \mathcal{A}_\delta^{\mm}$ be a minimizing sequence with remnants $G^j$. The existence of a minimizer is straightforward if we can find $R>0$ such that, up to modifications preserving the asymptotic minimality, $B_R^c \subset S_0^j$ for each $j$. We introduce the sets of finite perimeter $E_j=\cup_{\ell=1}^N S_\ell^j \cup G^j$, which satisfy
    $P(E_j)\leq \max\{c_\ell^{-1}\}\mathcal{F}(\S^j)$ and $\partial^* E_j \subset \cup_{\ell=1}^N \partial^* S_\ell^j$. Decomposing $E_j$ into its indecomposable components $\{E^j_k\}_{k=1}^\infty$ \cite[Theorem 1]{AMMN01}, we have $\mathcal{H}^1(\partial^* E_k^j \cap \partial^* E_{k'}^j)=0$ for $k\neq k'$. Therefore, for the clusters $\S_k^j=((E_k^j)^c, S_1^j \cap E_k^j,\dots,S_N^j \cap E_k^j,G^j \cap  E_k^j )$,
\begin{align}\notag
    %\S_k^j \subset E_k^j\,,\quad 
    \mathcal{F}(\S^j) = \sum_{k=1}^\infty \mathcal{F}(\S_k^j)\,.
\end{align}
Furthermore, by the indecomposability of any $E_k^j$, there exists $x_k^j\in \mathbb{R}^2$ such that
\begin{align}\notag
 (G^j \cap E_k^j)\cup \cup_{\ell=1}^N S_\ell^j \cap E_k^j \subset  E_k^j \subset B_{P(E_k^j)}(x_k^j)\,.
\end{align}
By the uniform energy bound along the minimizing sequence and this containment, for any $j$, we may translate each $\mathcal{S}_k^j$ so that the resulting sequence of clusters satisfies $B_R^c \subset S_0^j$. Finally, we note that $R\leq 2\max\{c_\ell^{-1}\}\inf_{\mathcal{A}_\delta^{\mm}}\mathcal{F}$, and since that infimum is bounded independently of $\delta$, it depends only on $\mm$.
\end{proof}

%\begin{remark}
 %   By keeping track of when equality holds in \eqref{convexity inequality}, it should be true that every minimizer of $\mathcal{F}$ among $\mathcal{A}_\delta^h$ satisfies \eqref{circular segment containment}. %{\color{blue}(It might be nice to add this, since then \eqref{circular segment containment} and our regularity result will hold for every minimizer of $\mathcal{A}_\delta^h$.)}
%\end{remark}

\section{Existence and Classification of Blow-up Cones}\label{sec:existence of blowup cones}
In this section, we prove the existence of blow-up cones for minimizers and classify the possibilities. Since the proofs are mostly modified versions of standard arguments, we will often be brief in this section and describe the main ideas and adjustments. Also, we do not include any arguments for the case $\mathcal{A}_0^{\mm}$ as that regularity is known in $\mathbb{R}^2$ \cite{White85,Morgan98}.%{\color{blue}(Aside: it appears the proof does NOT require $S_\ell^\delta \Delta S_\ell'$ to be compactly supported away from $\partial B$. This should be useful in describing behavior up to the boundary.)}

\subsection{Perimeter-almost minimizing clusters} Lemma \ref{lambda r nought lemma} allows us to test minimality 
of $\S^\delta$ against competitors that do not satisfy the constraint required for membership in $\mathcal{A}_\delta^h$ or $\mathcal{A}_\delta^{\mm}$. 
\begin{lemma}\label{lambda r nought lemma}
If $\S^\delta$ is a minimizer for $\mathcal{F}$, then there exist $r_0>0$ and $0\leq \Lambda<\infty$, both depending on $\S^\delta$, with the following property:
\begin{enumerate}[label=(\roman*)]
\item if $\delta>0$, $\S^\delta$ minimizes $\mathcal{F}$ among $\mathcal{A}_\delta^h$, $\S'$ satisfies the trace condition \eqref{trace constraint}, and $S_\ell^\delta \Delta S_\ell' \subset B_r(x)$ for $r<r_0$ and $1\leq \ell \leq N$, then, setting $G^\delta=B\setminus \cup_{\ell}S_\ell$ and $G'=B\setminus \cup_{\ell}S_\ell'$,
\begin{align}\label{lambda r nought inequality}
    \mathcal{F}(\S^\delta) \leq \mathcal{F}(\S') + \Lambda\big| |G^\delta| - |G'|\big|\,;
\end{align}
\item if $\delta > 0$, $\S^\delta$ minimizes $\mathcal{F}$ among $\mathcal{A}_\delta^{\mm}$ and $\S'$ satisfies $S_\ell^\delta \Delta S_\ell' \subset B_r(x)$ for $r<r_0$ and $1\leq \ell \leq N$, then
\begin{align}\label{lambda r nought inequality volume version}
    \mathcal{F}(\S^\delta) \leq \mathcal{F}(\S') + \Lambda\sum_{\ell=1}^N \big| |S_\ell^\delta|-|S_\ell'| \big|\,.
\end{align}
\end{enumerate}
\end{lemma}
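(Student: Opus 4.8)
The plan is to produce, for a given ball $B_r(x)$ with $r$ small, an explicit competitor that ``fixes up'' any volume discrepancy introduced by $\S'$ while changing the energy by at most a controlled amount proportional to that discrepancy. The volume/area of $G^\delta$ or of the chambers $S_\ell^\delta$ need not be preserved by an arbitrary localized variation $\S'$, so one cannot directly test minimality of $\S^\delta$ against $\S'$ inside $\mathcal{A}_\delta^h$ or $\mathcal{A}_\delta^{\mm}$; the standard remedy is to restore the constraint by a second small modification carried out far away from $B_r(x)$, at a fixed ``reservoir'' location, and to estimate the cost of that second modification linearly in the size of the discrepancy. This is exactly the mechanism used in \cite[VI.10--12]{Alm76} and in the almost-minimality arguments for clusters.

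First I would fix a point $p_0$ (or finitely many points $p_\ell$, one per chamber) lying in the interior of $S_\ell^\delta$, chosen so that a small ball $B_\rho(p_\ell)\subset S_\ell^{\delta,(1)}$ and disjoint from $B_{r_0}(x)$ and, in the ball case, from $\partial B$. By the relative isoperimetric inequality / the structure of sets of finite perimeter, for each chamber there is a one-parameter family of competitors supported in $B_\rho(p_\ell)$ whose $\ell$-th volume ranges over an interval around $|S_\ell^\delta|$ and whose energy depends Lipschitz-continuously on the volume parameter; concretely, one enlarges or shrinks $S_\ell^\delta$ near $p_\ell$ by a concentric ball, moving area between $S_\ell^\delta$ and an adjacent chamber (or, for statement (i), between $S_\ell^\delta$ and $G^\delta$, or between $G^\delta$ and $S_0^\delta$, depending on which sign of discrepancy must be corrected), at linear cost $\le \Lambda_0 \cdot |\text{area moved}|$ for a constant $\Lambda_0$ depending only on $\rho$ and the chambers near $p_\ell$. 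This gives $\Lambda$ and the condition that $r_0$ be small enough that $B_{r_0}(x)$ does not meet the reservoir balls. In case (ii) one must move area across possibly several chambers to net-conserve each $|S_\ell|$; since $N$ is finite this only inflates $\Lambda$ by a dimensional/combinatorial factor. In case (i) the only constraint to restore is $|G|\le\delta$, which is one-sided, but since $\S'$ may have either $|G'|<|G^\delta|$ or $|G'|>|G^\delta|$ one still adjusts in both directions: if $|G'|>|G^\delta|$ (so possibly $|G'|>\delta$) one shrinks $G'$ back near $p_0$ at linear cost; if $|G'|<|G^\delta|$ no correction is needed for admissibility, but one still wants \eqref{lambda r nought inequality} as stated, and in that case it follows trivially from minimality of $\S^\delta$ against the admissible competitor $\S'$ itself with the right-hand extra term being nonnegative.

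With the reservoir competitor in hand, the argument is: given $\S'$ as in the hypothesis, form $\S''$ by combining the variation of $\S'$ inside $B_r(x)$ with the reservoir correction near the $p_\ell$'s, arranged so that $\S''\in\mathcal{A}_\delta^h$ (resp.\ $\mathcal{A}_\delta^{\mm}$) and so that the reservoir correction moves a total area at most $\big||G^\delta|-|G'|\big|$ (resp.\ $\sum_\ell \big||S_\ell^\delta|-|S_\ell'|\big|$). Then $\mathcal{F}(\S^\delta)\le \mathcal{F}(\S'')$ by minimality, and $\mathcal{F}(\S'')\le \mathcal{F}(\S') + \Lambda\cdot(\text{total area moved})$ by the reservoir cost estimate, which is \eqref{lambda r nought inequality} or \eqref{lambda r nought inequality volume version}. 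I expect the only delicate point to be the bookkeeping in case (ii): one must verify that the reservoir balls can be chosen pairwise disjoint, disjoint from $\partial B$ (for the ball problem this lemma is not needed, but the same construction is reused), and that chambers near each $p_\ell$ genuinely admit volume-increasing \emph{and} volume-decreasing competitors of linear cost — which is where one invokes that $p_\ell$ is an interior point of a chamber with an adjacent chamber, together with the elementary fact that perturbing a half-space-like configuration by a small ball changes perimeter by $O(\text{radius})$ while changing volume by $O(\text{radius}^2)$, so the ratio (energy cost)/(area moved) stays bounded as the correction shrinks. The rest is routine.
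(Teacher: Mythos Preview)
Your proposal is correct and follows essentially the same approach as the paper: the paper's proof is just a pointer to the standard volume-fixing variation machinery (\cite[Lemma 17.21, Example 21.3]{Mag12} for $(i)$, and \cite[VI.10--12]{Alm76} via \cite[(29.80)--(29.82)]{Mag12} for $(ii)$), which is exactly the reservoir-correction mechanism you describe. Your handling of the one-sided constraint in $(i)$---no correction needed when $|G'|\le|G^\delta|$, and a reservoir correction shrinking $G'$ when $|G'|>|G^\delta|$---is the right reading of the situation; note for completeness that if $|G^\delta|<\delta$ one may take $r_0$ small enough that $|G'|\le|G^\delta|+\pi r_0^2<\delta$ and $\Lambda=0$ suffices, while if $|G^\delta|=\delta>0$ then $\partial^* G^\delta\cap\partial^* S_\ell^\delta$ is nonempty for some $\ell$ and furnishes the reservoir point.
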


\begin{proof}
For $(i)$, since we do not have to fix the areas of each chamber but only the remnant set, the proof is an application of the standard volume-fixing variations construction for sets of finite perimeter along the lines of \cite[Lemma 17.21 and Example 21.3]{Mag12}. %We modify $\S'$ away from $B_r(x)$ and $\partial B$ using a one-parameter family of diffeomorphism $\{f_t\}$ so that the trace condition \eqref{trace constraint} is satisfied by $f_t(G')$ and $|f_t(G')|=|G^\delta|$. Then we derive \eqref{lambda r nought inequality} by testing the minimality of $\S^\delta$ against the cluster $(S_0,f_t(S_1'),\dots, f_t(S_N'))$, using the first variation of perimeter and the fact that $|t|$ is controlled by $\big||G^\delta| - |G'|\big|$.
For $(ii)$, we use volume-fixing variations idea for clusters originating in \cite[VI.10-12]{Alm76}. More specifically, by considering the $(N+1)$-cluster $(S_0^\delta,\dots,S_N^\delta,G^\delta)$, \eqref{lambda r nought inequality volume version} follows directly from using \cite[Equations (29.80)-(29.82)]{Mag12} on this $(N+1)$-cluster to modify $\S'$ so that its energy may be tested against $\S^\delta$.\end{proof}

\subsection{Preliminary regularity when $\delta>0$}
Density estimates and regularity along $(G^\delta)^{1/2}\cap (S_\ell^\delta)^{1/2}$ can be derived from Lemma \ref{lambda r nought inequality}.
%a modification of the ``infiltration lemma" from \cite[Chapter 30]{Mag12}. The version stated here demonstrates that if some of $S_\ell^\delta$ fill most of $B_{r}(x)$, then they completely fill $B_{r/2}(x)$. {\color{blue}The argument is quite similar, but
%{\color{blue}It is crucial that the lower area density bounds do NOT hold for $G^\delta$, but only for $S_\ell^\delta$. This is because we do not expect to have area density estimates on $G^\delta$, which are a consequence of such a result. %At the level of the proof, this is reflected in the energy of suitable competitors (donating a chamber $S_\ell^\delta$ to $G^\delta$ decreases the $\mathcal{F}$ energy inside $B$ but not vice versa), and at the level of the heuristics, this is reflected in the fact that $G^\delta$ should be ``wetting" the triple junctions of a minimizer for $\mathcal{F}$ among $\mathcal{A}_0$ leading to a cheaper minimum energy among $\mathcal{A}_\delta^h$ than $\mathcal{A}_0$ if $\delta>0$.

\begin{lemma}[Infiltration Lemma for $\delta>0$]\label{infiltration lemma}
    If $\S^\delta$ is a minimizer for $\mathcal{F}$ among $\mathcal{A}_\delta^h$ or $\mathcal{A}_\delta^{\mm}$ for $\delta> 0$, then there exist constants $\varepsilon_0=\varepsilon_0>0$ and $r_*>0$ with the following property:
    \par
    \smallskip
    if $x\in \cl B$ when $\S^\delta \in \mathcal{A}_\delta^h$ or $x\in \mathbb{R}^2$ when $\S^\delta \in \mathcal{A}_\delta^{\mm}$, $r<r_*$, $0\leq \ell \leq N$, and
\begin{align}\label{small density assumption lemma}
 |S_\ell^\delta \cap B_r(x)| \leq \varepsilon_0 r^2\,,
\end{align}
then
\begin{align}\label{no infiltration equation}
    |S_\ell^\delta \cap B_{r/4}(x)| =0\,.
\end{align}
\end{lemma}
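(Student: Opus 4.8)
The plan is to run the standard infiltration/De Giorgi-type iteration, using the almost-minimality from Lemma \ref{lambda r nought lemma} in place of strict minimality. Fix $x$ and write $m(r):=|S_\ell^\delta\cap B_r(x)|$ for the mass of the chamber in the small ball; the goal is to show that if $m(r)\leq \varepsilon_0 r^2$ for a suitable threshold, then $m$ must vanish on a smaller ball. First I would set up the competitor: for a.e.\ $s\in(r/4,r)$ replace $S_\ell^\delta$ inside $B_s(x)$ by redistributing its mass to a neighboring chamber — concretely, take $S'_\ell=S_\ell^\delta\setminus B_s(x)$ and dump the removed piece into whichever chamber $S_k^\delta$ ($k\neq\ell$) already occupies $B_s(x)$ with large density (such a $k$ exists once $\varepsilon_0$ is small enough, since the $N+2$ densities in $B_s(x)$ sum to one). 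This is an admissible perturbation for Lemma \ref{lambda r nought lemma} with $S_\ell^\delta\Delta S_\ell'\subset B_s(x)\subset B_{r_*}(x)$, provided $r_*\leq r_0$; in the ball case one checks the trace condition \eqref{trace constraint} is untouched because $s<r\leq r_*$ can be taken with $B_s(x)$ disjoint from the relevant arcs, or more simply one only ever needs $s$ small.

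The energy comparison then reads, using \eqref{lambda r nought inequality} or \eqref{lambda r nought inequality volume version} and the fact that the new interface created is at most $\mathcal{H}^1(\partial B_s(x)\cap S_\ell^{\delta,(1)})$ while the old interface $\partial^*S_\ell^\delta\cap B_s(x)$ is destroyed (up to the coefficients $c_\ell,c_k$, which only contribute a fixed multiplicative constant $\Lambda'=\Lambda'(c_0,\dots,c_N)$):
\begin{align}\notag
c_*\,P(S_\ell^\delta;B_{r/4}(x)) \;\leq\; c_*\,P(S_\ell^\delta;B_s(x)) \;\leq\; \Lambda'\,\mathcal{H}^1\big(\partial B_s(x)\cap (S_\ell^\delta)^{(1)}\big) + \Lambda\,|S_\ell^\delta\cap B_s(x)|\,,
\end{align}
where $c_*=\min_\ell c_\ell$. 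Now one adds the slice term $\mathcal{H}^1(\partial B_s(x)\cap (S_\ell^\delta)^{(1)})$ to both sides to form the perimeter of $S_\ell^\delta\cap B_s(x)$ in $\mathbb{R}^2$, applies the isoperimetric inequality $P(S_\ell^\delta\cap B_s(x))\geq c_{\mathrm{iso}}\,m(s)^{1/2}$, and uses $m'(s)=\mathcal{H}^1(\partial B_s(x)\cap (S_\ell^\delta)^{(1)})$ for a.e.\ $s$ (the coarea formula for the distance function). Absorbing the $\Lambda\,m(s)\leq \Lambda\varepsilon_0 r^2$ term — which for $r<r_*$ small is a lower-order perturbation of the $m(s)^{1/2}$ term since $m(s)\leq\varepsilon_0 r^2$ forces $m(s)^{1/2}\leq \varepsilon_0^{1/2}r$ and $m(s)\leq \varepsilon_0^{1/2}r\cdot m(s)^{1/2}$ — yields a differential inequality of the form $m(s)^{1/2}\leq C\,m'(s)$ for a.e.\ $s\in(r/4,r)$, valid once $\varepsilon_0$ and $r_*$ are chosen small. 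Integrating this ODE from $r/4$ to $r$ gives $m(r/4)^{1/2}\leq m(r)^{1/2}-\tfrac{3r}{8C}$, and since $m(r)^{1/2}\leq\varepsilon_0^{1/2}r$, choosing $\varepsilon_0<(3/(8C))^2$ forces the right-hand side to be negative, hence $m(r/4)=0$, which is \eqref{no infiltration equation}. (Strictly, one runs this as a decay iteration $m(r/2^{k})$ à la \cite[Lemma 21.11 or Theorem 16.14]{Mag12} rather than a single ODE step, but the mechanism is identical.)

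The main obstacle I anticipate is the bookkeeping around \emph{which} competitor to use and verifying admissibility — in particular, (a) ensuring that after removing $S_\ell^\delta$ from $B_s(x)$ the leftover mass is genuinely absorbed into a \emph{single} other chamber so that the resulting tuple is still a partition of the right type, and that the extra interface between $S_k^\delta$ and the third-party chambers does not increase (this is where choosing $k$ with density close to $1$ in $B_s(x)$ matters, and it is exactly the point where $\varepsilon_0$ must be taken small relative to $N$); and (b) for $\mathcal{A}_\delta^h$, handling $x\in\partial B$ so that the perturbation does not disturb the trace $h$ — here one uses that the competitor only modifies $S_\ell^\delta$ inside a tiny ball and that $\partial B$ near $x$ carries a fixed value of $h$, plus possibly reflecting across $\partial B$ as in \cite[Example 21.3]{Mag12}, or invoking \eqref{energy independent of boundary} to reduce to an interior statement. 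Both of these are routine but need care; the ODE/iteration part is completely standard once the competitor is in hand.
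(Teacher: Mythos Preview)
Your overall strategy (compare against a competitor that deletes $S_\ell^\delta\cap B_s(x)$, derive a differential inequality for $m(s)$, integrate) is correct and matches the paper. However, your choice of competitor is the wrong one for this lemma, and the justification you give for it does not hold.

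You propose to donate $S_\ell^\delta\cap B_s(x)$ to a neighboring chamber $S_k^\delta$ with ``density close to $1$''. First, no such $k$ need exist: knowing only $|S_\ell^\delta\cap B_s|\le\varepsilon_0 s^2$ tells you nothing about the other $N{+}1$ pieces except that one of them has density at least $(1-\varepsilon_0)/(N{+}1)$, which is nowhere near $1$. Second, and more importantly, dumping into $S_k^\delta$ changes $P(S_k^\delta)$ as well: the portion of $\partial^*S_\ell^\delta\cap B_s$ facing third-party chambers $S_m^\delta$ (or $G^\delta$) becomes new boundary for $S_k^\delta$, so the energy comparison picks up a term like $(c_k-c_\ell)\mathcal{H}^1(\partial^*S_\ell^\delta\cap B_s\setminus\partial^*S_k^\delta)$ that you cannot control without additional hypotheses. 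The clean competitor---and this is the whole point of the $\delta>0$ setting---is to donate $S_\ell^\delta\cap B_s(x)$ to $G^\delta$. Since $P(G^\delta)$ carries no weight in $\mathcal{F}$, the only perimeter that changes is $P(S_\ell^\delta)$ itself, and \eqref{lambda r nought inequality} immediately yields
\[
c_\ell P(S_\ell^\delta;B_s(x))\le c_\ell\,\mathcal{H}^1\big((S_\ell^\delta)^{(1)}\cap\partial B_s(x)\big)+\Lambda\,m(s),
\]
from which your ODE argument goes through verbatim.

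For the boundary case $x\in\partial B$ in $\mathcal{A}_\delta^h$, reflecting across $\partial B$ or invoking \eqref{energy independent of boundary} is not what is needed. The issue is that removing $S_\ell^\delta\cap B_s(x)$ might violate the trace constraint \eqref{trace constraint} if $B_s(x)$ meets $\{h=\ell\}$. The paper handles this by a preliminary step: the structural inclusion \eqref{circular segment containment} from Theorem~\ref{existence theorem} forces a uniform lower density bound for $S_\ell^\delta$ at every point of $\{h=\ell\}$, so the smallness hypothesis \eqref{small density assumption lemma} itself guarantees $B_{r/2}(x)\cap\{h=\ell\}=\emptyset$, and the competitor is admissible.
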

%On the other hand, if $\S^\delta$ is a minimizer among $\mathcal{A}_\delta^{\mm}$, then the same statement holds with $r_*$ depending also on $\S^\delta$.

%\begin{remark}
%The estimate in the absence of the volume constraint is $\delta$-independent, but with the volume constraint the radius $r_*$ depends on $\delta$. While we believe that the $\delta$-independent statement holds in both cases, proving it in the case of the volume constraint would require being able to fix the area of a chamber $S_\ell^\delta$ without incurring the $\delta$-dependent mean-curvature cost in \eqref{lambda r nought inequality volume version} coming from $\|H_{\partial G^\delta}\|\approx \Lambda$.%In turn, this would require locating interfacial points of $\S^\delta$ that are at fixed distance from $G^\delta$ independently of $\delta$ near which we can fix areas.
%\end{remark}

%\begin{remark}
%    When $\delta>0$, the ``remnant" chamber $G^{\delta}$ will not satisfy such a statement due to the expected cusps. 
%\end{remark}

\begin{proof}We prove the lemma for $\mathcal{A}_\delta^h$ case in steps one and two. The case for $\mathcal{A}_\delta^{\mm}$ is the the same except that one uses \eqref{lambda r nought inequality volume version} instead of \eqref{lambda r nought inequality} when testing minimality in \eqref{on the way to differential inequality} below.
\par
\medskip
\noindent\textit{Step one}: In the first step, we show that there exists $\varepsilon(h)>0$ such that if $x\in \cl B$, $r<1$, and
\begin{align}\label{small density assumption lemma step one}
 |S_\ell^\delta \cap B_r(x)| \leq \varepsilon r^2\quad \textup{ for some $1\leq \ell \leq N$}\,,
\end{align}
for a minimizer among $\mathcal{A}_\delta^h$, then
\begin{align}\label{doesnt affect trace}
    B_{r/2}(x) \cap \{ h=\ell\}=\emptyset\,.
\end{align}
%\begin{align}\label{no infiltration equation step one}
%    |S_\ell^\delta \cap B_{r/2}(x)| =0\,.
%\end{align}
If $B_r(x) \cap \partial B = \emptyset$, \eqref{doesnt affect trace} is immediate, so we may as well assume in addition that 
\begin{align}\label{step one infiltration assumptions}
    B_r(x) \cap \partial B \neq \emptyset \,. %\quad\textup{and}\quad \{h=\ell \}\cap B_r(x) \neq \emptyset
\end{align}
%Let us choose $r'(h)$ small enough so that if $B_r(x) \cap \partial B \neq \emptyset$ for $r<r'$, then $h$ has at most one jump in $B_r(x) \cap \partial B$ (so $r'(h)$ is determined by the minimum distance between jumps of $h$).
In order to choose $\varepsilon$, we recall the inclusion \eqref{circular segment containment} from Theorem \ref{existence theorem}, % that guaranteed that the convex hull of any arc along which $h$ is constant is entirely contained in one chamber of the minimizer.
which allows us to pick $\varepsilon$ small enough (independent of $\delta$ or the particular minimizer) so that if $y\in \{h=\ell\}$, then
\begin{align}\label{circular segment density assumption}
    \inf_{0<r<1} \frac{|S_\ell^\delta \cap B_r(y)|}{r^2}> 4\varepsilon\,.
\end{align}
Now if $B_r(x)$ satisfies \eqref{small density assumption lemma step one}-\eqref{step one infiltration assumptions}, we claim that
\begin{equation}\label{no S ell trace inside smaller ball}
 B_{r/2}(x)\cap\{h=\ell\} =\emptyset\,,
\end{equation}
which is \eqref{doesnt affect trace}. Indeed, if \eqref{no S ell trace inside smaller ball} did not hold, then we could find $y\in B_{r/2}(x)$ such that $h(y)=\ell$, in which case by \eqref{circular segment density assumption},
\begin{align}\label{too much density inside small ball}
     \frac{|S_\ell^\delta \cap B_{r/2}(y)|}{r^2/4} > 4\varepsilon\,.
\end{align}
But $B_{r/2}(y) \subset B_{r}(x)$, so that \eqref{too much density inside small ball} implies $|S_\ell^\delta \cap B_r(x)| > \varepsilon r^2$, which contradicts our assumption \eqref{small density assumption lemma step one}. 
\par
\medskip
\noindent\textit{Step two}: Let $\varepsilon_0<\varepsilon$ and $r_* < 1$ to be positive constants to be specified later, and suppose that \eqref{small density assumption lemma} holds for some $1\leq \ell\leq N$ and $x\in \cl B$ with $r<r_*$. We set $m(r)=|S_{\ell}^\delta \cap B_r(x)|$, so that for almost every $r$, the coarea formula gives
\begin{align}\label{cons of coarea}
    m'(r) = \mathcal{H}^1((S_{\ell}^\delta)^{(1)} \cap \partial B_r(x)) = \mathcal{H}^1((S_{\ell}^\delta)^{(1)} \cap \partial B_r(x) \cap B)\,.
\end{align}
By the conclusion \eqref{doesnt affect trace} of step one, 
\begin{align}\label{doesnt affect trace step two}
    B_{r/2}(x) \cap \{ h=\ell\}=\emptyset\,.
\end{align}
Therefore, for $s< r/2$,
\begin{align}\label{trace still satisfied}
    (S_{\ell}^\delta \setminus B_s(x))\cap \partial B = \{x\in \partial B : h(x) = \ell \}\quad\textup{in the sense of traces}\,.
\end{align}
In particular, removing $B_s(x)$ from $S_{\ell}^\delta$ does not disturb the trace condition \eqref{trace constraint}. Then we may apply \eqref{lambda r nought inequality} from Lemma \ref{lambda r nought lemma}, yielding for almost every $s<r/2$ 
\begin{align}\notag
    \mathcal{F}(\S^\delta)
    &\leq \mathcal{F}(B^c,S_1^\delta,\dots ,S_{\ell}^\delta \setminus B_s(x), \dots,S_N^\delta, G^\delta \cup (S_\ell^\delta \cap B_s(x))) + \Lambda |S_{\ell}^\delta \cap B_s(x)| \\ \label{on the way to differential inequality}
    &= \mathcal{F}(\S^\delta) - c_\ell P(S_\ell^\delta;B_s(x))+ c_\ell\mathcal{H}^1((S_\ell^\delta)^{(1)} \cap \partial B_s(x))+ \Lambda |S_\ell^\delta \cap B_s(x)|\,;
\end{align}
in the second line we have used the formula
$$
P(S_{\ell}^\delta \setminus B_s(x);B)=P(S_{\ell}^\delta; B\setminus  \cl B_s(x)) + \mathcal{H}^1((S_{\ell}^\delta)^{(1)} \cap \partial B_s(x))\,,
$$
which holds for all but those countably many $s$ with $\mathcal{H}^1(\partial^* S_{\ell}^\delta\cap \partial B_s(x))>0$. After rearranging \eqref{on the way to differential inequality} and using the isoperimetric inequality to obtain
\begin{align}\notag
    2c_{\ell}\pi^{1/2}m(s)^{1/2} \leq  2c_{\ell}m'(s) + \Lambda m(s)\,,
\end{align}
we may reabsorb the term $\Lambda m(s)$ onto the left hand side and integrate to obtain the requisite decay on $m$.%``isoperimetric differential inequality" argument yielding information on the decay of $m$ to $0$; see e.g. \cite[Remark 15.16]{Mag12}.
\end{proof}

\begin{corollary}[Regularity along $(G^\delta)^{1/2} \cap  (S_\ell^\delta)^{1/2}$]\label{reduced boundary regularity delta positive}
If $\delta>0$ and, for a minimizer $\S\in \mathcal{A}_\delta^h$ or $\mathcal{S}\in \mathcal{A}_\delta^{\mm}$ and point $x\in B$ or $x\in \mathbb{R}^2$, respectively, there exists $r_j\to 0$ and $ \ell$ such that
\begin{align}\label{sees 1/2}
    1=\lim_{j\to \infty}\frac{|(G^\delta \cup S_\ell^\delta)\cap B_{r_j}(x)|}{\pi r_j^2}\,,
\end{align}
then for large $j$, $\partial G^\delta \cap \partial S_\ell^\delta \cap B_{r_j}(x)$ is an arc of constant curvature and $S_{\ell'} \cap B_{r_j}(x) = \emptyset$ for $\ell'\neq \ell$.
\end{corollary}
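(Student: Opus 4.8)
The plan is to exploit the hypothesis \eqref{sees 1/2}, which says that the combined set $G^\delta \cup S_\ell^\delta$ has density $1$ at $x$ along the sequence $r_j$, to show first that all the other chambers $S_{\ell'}^\delta$, $\ell'\neq \ell$, are empty in a fixed small ball, and then to invoke classical regularity for $S_\ell^\delta$ as a volume-constrained almost minimizer. First I would observe that \eqref{sees 1/2} forces $|S_{\ell'}^\delta \cap B_{r_j}(x)|/(\pi r_j^2) \to 0$ for every $\ell'\neq \ell$, since the chambers are disjoint and $|(G^\delta \cup S_\ell^\delta \cup \bigcup_{\ell'\neq \ell} S_{\ell'}^\delta)\cap B_r(x)| = \pi r^2$ for all $r$ (the cluster partitions the plane, or the ball together with $S_0$). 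Hence for $j$ large we have $|S_{\ell'}^\delta\cap B_{r_j}(x)| \leq \varepsilon_0 r_j^2$ for each of the finitely many indices $\ell'\neq \ell$, where $\varepsilon_0$ is the infiltration constant from Lemma \ref{infiltration lemma}; note $x\in B$ (or $x\in\mathbb{R}^2$) so in the ball case we are genuinely in the interior and the hypothesis of Lemma \ref{infiltration lemma} applies once $r_j<r_*$. The infiltration lemma then yields $|S_{\ell'}^\delta \cap B_{r_j/4}(x)| = 0$ for all $\ell'\neq \ell$, i.e.\ $S_{\ell'}^\delta\cap B_{r_j/4}(x)=\emptyset$.

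Next I would upgrade this to a statement about $S_\ell^\delta$ itself. Since $\bigcup_{\ell'}S_{\ell'}^\delta \cup G^\delta$ exhausts the ambient space, $S_{\ell'}^\delta\cap B_{r_j/4}(x)=\emptyset$ for all $\ell'\neq\ell$ means that inside $B_{r_j/4}(x)$ the only competing sets are $S_\ell^\delta$ and $G^\delta$, so $\partial S_\ell^\delta \cap B_{r_j/4}(x) = \partial G^\delta \cap \partial S_\ell^\delta \cap B_{r_j/4}(x)$ and $B_{r_j/4}(x) = (S_\ell^\delta \cup G^\delta \cup \partial S_\ell^\delta)\cap B_{r_j/4}(x)$ up to a null set. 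Now apply Lemma \ref{lambda r nought lemma}: any competitor obtained by modifying $S_\ell^\delta$ inside $B_{r_j/4}(x)$ while leaving the trace unchanged (which is automatic here, since $B_{r_j/4}(x)$ is interior once $r_j$ is small, so \eqref{doesnt affect trace}-type considerations are vacuous) satisfies \eqref{lambda r nought inequality} (or \eqref{lambda r nought inequality volume version} in the $\mathcal{A}_\delta^{\mm}$ case, moving mass into $G^\delta$ and then compensating via volume-fixing variations far away). This exhibits $S_\ell^\delta$ as a $(\Lambda, r_0)$-perimeter almost minimizer in $B_{r_j/4}(x)$ in the sense of the classical theory. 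By De Giorgi's regularity theorem for $\Lambda$-minimizers in the plane, $\partial S_\ell^\delta \cap B_{r_j/8}(x)$ is a $C^{1,\alpha}$ curve; the Euler--Lagrange equation for a volume-constrained minimizer (the first variation picks up a constant Lagrange multiplier because the only allowed mass exchange is with the unpenalized $G^\delta$) then forces constant curvature, and elliptic regularity upgrades this to the arc being a genuine circular arc or segment. Shrinking $r_j$ slightly if necessary so that the radius in the statement is the original $r_j$ rather than $r_j/8$ — or equivalently re-indexing — gives the claimed conclusion on $B_{r_j}(x)$ for large $j$.

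The main obstacle I expect is purely bookkeeping about radii: the infiltration lemma produces emptiness on $B_{r_j/4}(x)$, De Giorgi regularity operates on a further interior ball, and the statement wants the conclusion literally on $B_{r_j}(x)$. This is resolved either by passing to the subsequence $r_j' = r_j/8$ (still a sequence tending to $0$ along which \eqref{sees 1/2} holds, since density $1$ at the scale $r_j$ is inherited at the scale $r_j/8$ after noting $|(G^\delta\cup S_\ell^\delta)\cap B_{r_j/8}(x)| \geq |(G^\delta\cup S_\ell^\delta)\cap B_{r_j}(x)| - \pi(r_j^2 - (r_j/8)^2)$... which does \emph{not} immediately give density $1$ at scale $r_j/8$) — so more carefully, one uses that once $S_{\ell'}^\delta\cap B_{r_j/4}(x)=\emptyset$ for all $\ell'\neq\ell$, the set $G^\delta\cup S_\ell^\delta$ has density exactly $1$ at \emph{every} scale below $r_j/4$, in particular the hypothesis self-improves and $\partial S_\ell^\delta$ is regular on all of $B_{r_j/4}(x)$; then simply relabel the sequence so that $r_j$ denotes what was $r_j/4$. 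A secondary technical point is verifying the constant-curvature conclusion rather than merely $C^{1,\alpha}$: this follows because the competitor class for $S_\ell^\delta$ near $x$ allows arbitrary volume changes absorbed by $G^\delta$ (in the $\mathcal{A}_\delta^h$ case) or by distant volume-fixing variations (in the $\mathcal{A}_\delta^{\mm}$ case), so the first variation of perimeter equals a constant times the first variation of enclosed volume, i.e.\ the curvature is constant; this is standard once the almost-minimality is in hand.
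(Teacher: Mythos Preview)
Your proposal is correct and follows essentially the same approach as the paper: use \eqref{sees 1/2} together with the infiltration lemma to clear out the other chambers $S_{\ell'}^\delta$ on a small ball, then invoke classical regularity for volume-constrained perimeter minimizers to get the constant-curvature arc. The paper's proof is a two-line version of exactly this argument, and your radius bookkeeping (relabeling $r_j/4$ as $r_j$ once the other chambers are gone) is the right way to reconcile the factor of four from Lemma~\ref{infiltration lemma} with the statement.
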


\begin{proof}
By our assumption \eqref{sees 1/2} and the infiltration lemma, for some $j$ large enough, $B_{r_j}(x) \subset S_\ell^\delta \cup G^\delta$, in which case the classical regularity theory for volume-constrained minimizers of perimeter gives the conclusion.
\end{proof}

\begin{corollary}[Density Estimates]\label{density corollary}
    If $\S^\delta$ minimizes $\mathcal{F}$ among $\mathcal{A}_\delta^h$ or $\mathcal{A}_\delta^{\mm}$ for some $\delta> 0$, then there exists $0<\alpha_1,\, \alpha_2< 1$ and $r_{**}>0$  such that if $x\in \partial S_\ell^\delta$, then for all $r<r_{**}$,
\begin{align}\label{area density equation}
    \alpha_1 \pi r^2 \leq |S_\ell^\delta \cap B_r(x)| &\leq (1-\alpha_1)\pi r^2 \\ \label{perimeter density equation}
    %\alpha_2 r &\leq 
    P(S_\ell^\delta;B_r(x)) &\leq \alpha_2r\,.
\end{align}
Also, $\mathcal{H}^1(\partial S_\ell^\delta\setminus \partial^* S_\ell^\delta)=0$ and each $(S_\ell^\delta)^{(1)}$ and $(G^{\delta})^{(1)}$ is open and satisfies \eqref{boundary convention 1}-\eqref{boundary convention 2}.
\end{corollary}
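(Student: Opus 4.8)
The plan is to obtain all the assertions of the corollary from the infiltration lemma (Lemma~\ref{infiltration lemma}) and the almost-minimality of $\S^\delta$ recorded in Lemma~\ref{lambda r nought lemma}, following the usual density-estimate arguments for $\Lambda$-minimizing clusters; I will only specify the competitors and flag where the unpenalized set $G^\delta$ needs extra care. The lower area bound in \eqref{area density equation} is immediate: if $x\in\partial S_\ell^\delta$ and $|S_\ell^\delta\cap B_r(x)|\le\varepsilon_0 r^2$ for some $r<r_*$, then Lemma~\ref{infiltration lemma} gives $|S_\ell^\delta\cap B_{r/4}(x)|=0$, contradicting \eqref{boundary convention 2}; hence $|S_\ell^\delta\cap B_r(x)|>\varepsilon_0 r^2$ for every $r<r_*$, and we may take $\alpha_1\le\varepsilon_0/\pi$ and $r_{**}\le r_*$.

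For the perimeter bound \eqref{perimeter density equation} I test $\S^\delta$ against the competitor $S_\ell':=S_\ell^\delta\cup B_s(x)$ and $S_m':=S_m^\delta\setminus B_s(x)$ for $m\ne\ell$, which is admissible in Lemma~\ref{lambda r nought lemma} once $s$ is small enough that $B_s(x)$ avoids $\partial B$. Outside $\cl B_s(x)$ this competitor agrees with $\S^\delta$, and the perimeter it adds inside $\cl B_s(x)$ is at most $c_\ell\,\mathcal{H}^1(\partial B_s(x))+\sum_{m\ne\ell}c_m\,\mathcal{H}^1((S_m^\delta)^{(1)}\cap\partial B_s(x))\le(c_\ell+\max_m c_m)\,2\pi s$, using the disjointness of the sets $(S_m^\delta)^{(1)}$ along the circle $\partial B_s(x)$; together with the bound $\pi s^2$ for the $\Lambda$-error term in \eqref{lambda r nought inequality} (resp.\ \eqref{lambda r nought inequality volume version}) this yields $\mathcal{F}(\S^\delta;\cl B_s(x))\le Cs+\Lambda\pi s^2$, and hence \eqref{perimeter density equation} after shrinking $r_{**}$. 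The finitely many points $x\in\partial B$, where filling $B_s(x)$ would disturb the trace, are treated separately using the inclusion \eqref{circular segment containment}.

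The main obstacle is the upper area bound in \eqref{area density equation}, i.e.\ a lower density estimate at $x\in\partial S_\ell^\delta$ for the complementary set $T_\ell:=\mathbb{R}^2\setminus S_\ell^\delta=\bigcup_{m\ne\ell}S_m^\delta\cup G^\delta$. Since $T_\ell$ is not one of the chambers --- it contains the unpenalized set $G^\delta$ --- Lemma~\ref{infiltration lemma} cannot be invoked for it directly; instead one re-derives the infiltration estimate by testing $\S^\delta$ against the competitor that \emph{fills} $B_r(x)$ with $S_\ell^\delta$, namely $S_\ell':=S_\ell^\delta\cup B_r(x)$ and $S_m':=S_m^\delta\setminus B_r(x)$ for $m\ne\ell$ (so $G'=G^\delta\setminus B_r(x)$). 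This is legitimate precisely because filling $B_r(x)$ only \emph{decreases} the perimeters of the other chambers, while the area/remnant change is at most $v(r):=|T_\ell\cap B_r(x)|=|B_r(x)\setminus S_\ell^\delta|$. Choosing a good radius $\rho\in(r/2,r)$ by the coarea formula so that $\mathcal{H}^1(\partial B_\rho(x)\cap(S_\ell^\delta)^{(0)})$ and $\sum_{m\ne\ell}\mathcal{H}^1((S_m^\delta)^{(1)}\cap\partial B_\rho(x))$ are each at most $v'(\rho)$, testing minimality gives $\mathcal{F}(\S^\delta;\cl B_\rho(x))\le C\,v'(\rho)+\Lambda\,v(\rho)$; combining this with $P(S_\ell^\delta;B_\rho(x))\ge 2\sqrt{\pi\,v(\rho)}-v'(\rho)$ (the isoperimetric inequality applied to $T_\ell\cap B_\rho(x)$) and reabsorbing $\Lambda v$ for $\varepsilon_0$ small produces the differential inequality $v'(\rho)\ge c\sqrt{v(\rho)}$, whose integration --- exactly as in the proof of Lemma~\ref{infiltration lemma} --- forces $|T_\ell\cap B_{r/4}(x)|=0$, contradicting \eqref{boundary convention 2}. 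Thus $|B_r(x)\setminus S_\ell^\delta|>\varepsilon_0 r^2$ for $r<r_{**}$, which after shrinking $\alpha_1$ is the upper bound in \eqref{area density equation}; for $x\in\partial B$ this is instead immediate from $S_\ell^\delta\subset B$, since then $|S_\ell^\delta\cap B_r(x)|\le|B\cap B_r(x)|\le(\tfrac{1}{2}+\littleo(1))\pi r^2$.

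Finally, \eqref{area density equation} shows that $\partial S_\ell^\delta$ contains no density-$0$ or density-$1$ point of $S_\ell^\delta$, so $\partial S_\ell^\delta$ coincides with the essential boundary and $\mathcal{H}^1(\partial S_\ell^\delta\setminus\partial^* S_\ell^\delta)=0$ by Federer's theorem; the same estimates, together with the closedness of $\partial S_\ell^\delta$ from \eqref{boundary convention 1}, force $(S_\ell^\delta)^{(1)}$ to be open and to satisfy \eqref{boundary convention 1}--\eqref{boundary convention 2}; and since $G^\delta=\mathbb{R}^2\setminus\bigcup_\ell S_\ell^\delta$ up to a null set, $(G^\delta)^{(1)}=\bigcap_\ell(S_\ell^\delta)^{(0)}$ is open and $G^\delta$ inherits \eqref{boundary convention 1}--\eqref{boundary convention 2}. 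I expect the upper area bound to be the only genuinely nonroutine step, precisely because the complement of a chamber is not a chamber and the infiltration estimate has to be re-proved for the filling competitor, with care near $\partial B$ where it would violate the trace condition.
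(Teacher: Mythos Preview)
Your argument is correct and coincides with the paper's for the lower area bound (both invoke Lemma~\ref{infiltration lemma} directly), the perimeter bound (the paper simply says this ``follows from a construction which we omit''), and the concluding mild-regularity statements via Federer's theorem. The one genuine divergence is in the \emph{upper} area bound. You re-derive an infiltration-type estimate for the complement $T_\ell=\mathbb{R}^2\setminus S_\ell^\delta$ by testing against the filling competitor $S_\ell^\delta\cup B_\rho(x)$, coupling the relative isoperimetric inequality for $T_\ell\cap B_\rho(x)$ with the comparison inequality, and integrating the resulting differential inequality $v'\geq c\sqrt{v}$. The paper proceeds differently: assuming $|S_\ell^\delta\cap B_r(x)|$ is close to $\pi r^2$, it applies Lemma~\ref{infiltration lemma} to each of the \emph{other} chambers $S_{\ell'}^\delta$ (whose densities are then below $\varepsilon_0$) to conclude they are empty in $B_{r/4}(x)$; thus $B_{c(B)r/2}(x)\subset S_\ell^\delta\cup G^\delta$, so $S_\ell^\delta$ is a classical $(\Lambda,r_{**})$-perimeter minimizer there, and the upper density bound is imported from the single-set theory \cite[Theorem~21.11]{Mag12}. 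Your route is more self-contained and makes explicit why the filling competitor is legitimate even though $T_\ell$ contains the unpenalized set $G^\delta$; the paper's route is shorter because, once the other chambers are cleared away, it reduces to an off-the-shelf result.
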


\begin{proof}
    We consider the case $\S^\delta \in \mathcal{A}_\delta^h$ and $1\leq \ell \leq N$, and the other cases are similar. First we prove the lower bound in \eqref{area density equation}. Let $x\in \partial S_\ell^\delta$. Then by our convention \eqref{boundary convention 1}-\eqref{boundary convention 2} regarding topological boundaries,
\begin{align}\notag
    |S_\ell^\delta \cap B_r(x)| > 0 \quad\textup{for all $r>0$}\,.
\end{align}
  By the infiltration lemma, the lower area density bound follows with $\alpha_1=\varepsilon_0$ and $r_{**}=r_*$. 
    \par
%Moving on to the upper area bound in \eqref{area density equation}, we begin with the case $\delta=0$. It is clear from the inclusion \eqref{circular segment containment} that we may choose $\alpha_1$ such that \eqref{area density equation} holds when $x\in \partial B$  is a jump point of $h$. On the other hand, suppose for contradiction that at some $x\in B \cap \partial S_\ell^0$, $|S_\ell^0 \cap B_r(x)| > (\pi - \varepsilon_0) r^2$. Then by the infiltration lemma and $x\in B$, we find that $B_{r/4}(x)\subset (S_\ell^0)^{(1)}$. But this contradicts the convention \eqref{boundary convention 1}-\eqref{boundary convention 2}.
%\par
%\medskip
For the upper area bound, let us choose $r_{**}\leq r_*$ such that
\begin{align}\label{scale invariant small}
    \Lambda r_{**} \leq 1\,.
\end{align}
We claim that for any $x\in \partial S_\ell^\delta$ and $r<r_{**}$, 
\begin{align}\label{upper area bound when delta positive}
    |S_\ell^\delta \cap B_r(x)| \leq \max\left\{\pi - \varepsilon_0,c_* \right\} r^2
\end{align}
for a dimensional constant $c_*$ to be specified shortly. Suppose that this were not the case. Then by the smoothness of $\partial B$ and the containment of $S_\ell^\delta$ in $B$, 
\begin{align}\label{far from boundary 2}
    \dist (x, \partial B ) \geq c(B) r
\end{align}
for some constant $c(B)<1/2$ depending $B$, so that $B_{c(B)r/2}(x)\subset B$. Also, by the infiltration lemma, $S_{\ell'}^\delta \cap B_{r/4}(x)=\emptyset$ for $\ell' \neq \ell$. 
%Also, by the same argument as above, we have
%\begin{align}\label{far from boundary 2}
%    \dist(x, \partial B) \geq c(B)r\,.
%\end{align}
%for some $c(B)< 1/2$.
These two facts combined imply that $B_{c(B)r/2}(x) \subset\!\subset S_\ell^\delta \cup G^\delta$. By Lemma \ref{lambda r nought lemma}, $S_\ell^\delta$ is a $(\Lambda, r_{**})$-minimizer of perimeter in $B_{c(B)r/2}(x)$ with $\Lambda r_{**}\leq 1$ by \eqref{scale invariant small}. Then the density estimates \cite[Theorem 21.11]{Mag12} for these minimizers give 
\begin{align}
    |S_\ell^\delta \cap B_{c(B)r/2}(x)| \leq \frac{15\pi}{64}(c(B)r)^2\,.
\end{align}
By choosing $c_{**}$ close enough to $\pi$, we have a contradiction.The upper area bound follows from a construction which we omit, and the mild regularity on $\partial S_\ell^\delta$ follows from our normalization \eqref{boundary convention 1}-\eqref{boundary convention 2}, the area bounds, and Federer's theorem \cite[4.5.11]{Fed69}.\end{proof}

\begin{remark}[Lebesgue representatives]\label{lebesgue representative remark}
   For the rest of the paper, we will always assume that we are considering the open set $(S_\ell^\delta)^{(1)}$ or $(G^\delta)^{(1)}$ as the Lebesgue representative of $S_\ell^\delta$ or $G^\delta$. 
\end{remark}

\subsection{Preliminary regularity when $\delta=0$}

The following infiltration (or ``elimination") lemma for a minimizer among $\mathcal{A}_0^{\mm}$ is due to \cite[Theorem 3.1]{Leo01} and can be adapted easily to the problem on the ball; the reader may also consult \cite[Section 11]{White} for a similar statement.

\begin{lemma}[Infiltration Lemma for $\delta=0$]\label{infiltration lemma 2}
    If $\S^0$ is a minimizer for $\mathcal{F}$ among $\mathcal{A}_0^h$, then there exist constants $\varepsilon_0=\varepsilon_0>0$ and $r_*>0$ with the following property:
    \par
    \smallskip
    if $x\in \mathbb{R}^2$, $r<r_*$, and $0\leq \ell_0< \ell_1\leq N$ are such that
\begin{align}\label{small density assumption lemma 2}
 |B_r(x) \setminus (S_{\ell_0}^0\cup S_{\ell_1}^0)| \leq \varepsilon_0 r^2\,,
\end{align}
then
\begin{align}\label{no infiltration equation 2}
    |B_{r/4}(x) \setminus (S_{\ell_0}^0\cup S_{\ell_1}^0)| =0\,.
\end{align}
\end{lemma}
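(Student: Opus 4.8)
\textbf{Proof plan for Lemma \ref{infiltration lemma 2} (Infiltration for $\delta=0$).}

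The plan is to follow the standard ``elimination'' argument for minimal clusters, comparing $\S^0$ against a competitor in which the small remnant $B_r(x)\setminus(S_{\ell_0}^0\cup S_{\ell_1}^0)$ is absorbed into $S_{\ell_0}^0$ or $S_{\ell_1}^0$ on the ball $B_s(x)$ for $s<r$. Write $E=\mathbb{R}^2\setminus(S_{\ell_0}^0\cup S_{\ell_1}^0)=\bigcup_{\ell\neq \ell_0,\ell_1}S_\ell^0$ and $m(s)=|E\cap B_s(x)|$. First I would recall that, exactly as in Lemma \ref{lambda r nought lemma}(ii), a minimizer among $\mathcal{A}_0^h$ is a $(\Lambda,r_0)$-perimeter almost-minimizing cluster: replacing the chambers $S_\ell^0$ with $\ell\neq \ell_0,\ell_1$ inside $B_s(x)$ by pieces of $S_{\ell_0}^0$ (say) costs us in $\mathcal{F}$ at most the surface energy created on $\partial B_s(x)$ plus $\Lambda$ times the total volume shifted, the latter error being controllable by area-fixing variations performed far from $x$ using \cite[VI.10-12]{Alm76}. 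Since here one absorbs $E\cap B_s(x)$ into a single chamber, the volume constraints on the shifted chambers must be restored elsewhere, which is where the $(\Lambda,r_0)$ structure enters.

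The key steps, in order: (1) For a.e. $s<r$, slice by $\partial B_s(x)$ and use the coarea formula to write $m'(s)=\mathcal{H}^1(E^{(1)}\cap\partial B_s(x))$; partition this circle slice among the chambers $S_\ell^0\cap\partial B_s(x)$. (2) Build the competitor $\S'$ agreeing with $\S^0$ outside $B_s(x)$, in which on $B_s(x)$ every chamber $S_\ell^0$ with $\ell\neq\ell_0,\ell_1$ is removed and $S_{\ell_0}^0$ (or $S_{\ell_1}^0$, whichever is cheaper, i.e.\ has the smaller coefficient among $c_{\ell_0},c_{\ell_1}$; or simply fix one) is enlarged to fill $E\cap B_s(x)$. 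Testing $(\Lambda,r_0)$-minimality as in \eqref{on the way to differential inequality}, all interfacial energy of $\S^0$ inside $B_s(x)$ that is adjacent to the removed chambers is destroyed, at the cost of at most $c_*\,\mathcal{H}^1(E^{(1)}\cap\partial B_s(x))+\Lambda\, m(s)$ for a constant $c_*$ depending only on the $c_\ell$. The strict triangle inequalities \eqref{strict triangle ineq} guarantee that the destroyed energy dominates a fixed multiple of the perimeter $P(E;B_s(x))$ of the removed region, so after rearranging one obtains, for a.e.\ $s<r$,
\begin{align}\notag
c_\# \,P(E;B_s(x)) \leq c_*\,m'(s) + \Lambda\,m(s)\,.
\end{align}
(3) Apply the relative isoperimetric inequality in $B_s(x)$: since $m(s)\leq m(r)\leq \varepsilon_0 r^2$ is small, $P(E;B_s(x))\gtrsim m(s)^{1/2}$, and with $r<r_*$ chosen so that $\Lambda r_*$ is small the term $\Lambda m(s)$ is reabsorbed into the left side, yielding a differential inequality $m(s)^{1/2}\lesssim m'(s)$. (4) Integrate this from $0$ to $r/4$: either $m\equiv 0$ on $[0,r/4]$, or $m(s)^{1/2}$ grows at least linearly, forcing $m(r)\gtrsim r^2$ with an explicit dimensional-and-$c_\ell$ constant; choosing $\varepsilon_0$ smaller than that constant contradicts \eqref{small density assumption lemma 2}. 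Hence $m(r/4)=0$, which is \eqref{no infiltration equation 2}.

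The main obstacle is Step (2): one must verify that the competitor is genuinely admissible for $\mathcal{A}_0^h$ after area correction — i.e.\ that the volume displaced from the absorbed chambers can be restored by variations supported away from $B_r(x)$ without creating energy that is not already accounted for by the $\Lambda$-term — and that the energy bookkeeping on $\partial B_s(x)$ uses only the \emph{strict} triangle inequalities \eqref{strict triangle ineq} to extract a coercive lower bound on the eliminated perimeter (otherwise the competitor gain could be degenerate and the argument collapses). This is precisely the point handled by \cite[Theorem 3.1]{Leo01} in the $\mathcal{A}_0^{\mm}$ case, and since the ball problem decouples as in \eqref{energy independent of boundary} with the trace condition preserved away from $x$ whenever $B_r(x)$ avoids the relevant jump arcs (and is otherwise trivially true, cf.\ step one of Lemma \ref{infiltration lemma}), the same proof applies verbatim; I would simply indicate these adaptations rather than reproduce the estimate in full.
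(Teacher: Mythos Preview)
Your approach is correct and matches the paper's: both reduce to the elimination argument of \cite{Leo01} after verifying that the competitor donating $B_s(x)\setminus(S_{\ell_0}^0\cup S_{\ell_1}^0)$ to one of the dominant chambers is admissible, with the trace issue handled exactly as in step one of Lemma~\ref{infiltration lemma}. One clarification: $\mathcal{A}_0^h$ carries only the trace constraint \eqref{trace constraint} and \emph{no} area constraints on the chambers, so the volume-fixing variations you invoke via Lemma~\ref{lambda r nought lemma}(ii) and \cite{Alm76} are unnecessary --- once the trace is undisturbed the competitor lies in $\mathcal{A}_0^h$ and one may take $\Lambda=0$ outright.
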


\begin{proof}
Repeating the argument from step one of Lemma \ref{infiltration lemma}, there exists $\varepsilon(h)>0$ such that if $x\in \cl B$, $r<1$, and
\begin{align}\label{small density assumption lemma step one 2}
 |S_\ell^\delta \cap B_r(x)| \leq \varepsilon r^2\quad \textup{ for some $\ell\in\{0,\dots, N\}\setminus \{\ell_0,\ell_1\}$}\,,
\end{align}
for a minimizer among $\mathcal{A}_\delta^h$, then
\begin{align}\label{doesnt affect trace 2}
    B_{r/2}(x) \cap \{ h=\ell\}=\emptyset\,.
\end{align}
In particular, by using Lemma \ref{lambda r nought lemma}, we may compare the minimality of $\S^0$ against competitors constructed by donating $B_s(x)\setminus (S_{\ell_0}^0 \cup S_{\ell_1}^0)$ to $S_{\ell_0}$ or $S_{\ell_1}$. The remainder of the argument is the same as in \cite{Leo01}.
\end{proof}

The next two results may be proved as in Corollary \ref{reduced boundary regularity delta positive} and Corollary \ref{density corollary}.

\begin{corollary}[Regularity along $(S_\ell^0)^{1/2} \cap (S_{\ell'}^0)^{1/2}$]\label{reduced boundary regularity delta 0}
If $\mathcal{S}^0$ is a minimizer among $\mathcal{A}_0^h$ and $x\in (S_\ell^0)^{1/2} \cap (S_{\ell'}^0)^{1/2}$ for $\ell,\ell'\in \{1,\dots,N\}$, then in a neighborhood of $x$, every other chamber is empty and $\partial S_\ell^0 \cap \partial S_{\ell'}^0$ is a segment.
\end{corollary}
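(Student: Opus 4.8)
The plan is to reduce this corollary to the two preceding results (the infiltration lemma for $\delta=0$, Lemma~\ref{infiltration lemma 2}, and the classical theory for minimal clusters, in the spirit of Corollary~\ref{reduced boundary regularity delta positive}) by first observing that near a point $x\in(S_\ell^0)^{1/2}\cap(S_{\ell'}^0)^{1/2}$, the density hypothesis
\begin{align}\notag
    1=\lim_{r\to 0}\frac{|(S_\ell^0\cup S_{\ell'}^0)\cap B_r(x)|}{\pi r^2}
\end{align}
holds automatically: since $x$ has density $1/2$ in each of $S_\ell^0$ and $S_{\ell'}^0$, which are disjoint, their union has density $1$ at $x$. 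First I would feed this into Lemma~\ref{infiltration lemma 2}: taking $\{\ell_0,\ell_1\}=\{\ell,\ell'\}$ and choosing $r$ small enough that $|B_r(x)\setminus(S_\ell^0\cup S_{\ell'}^0)|\le \varepsilon_0 r^2$, the lemma gives $|B_{r/4}(x)\setminus(S_\ell^0\cup S_{\ell'}^0)|=0$. Hence in the ball $B_{r/4}(x)$, all other chambers $S_m^0$ ($m\neq\ell,\ell'$) are (Lebesgue-)empty, so the cluster is effectively a two-phase partition there, and $\partial S_\ell^0\cap B_{r/4}(x)=\partial S_{\ell'}^0\cap B_{r/4}(x)$ up to $\mathcal{H}^1$-null sets.

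Next I would invoke the classical interior regularity theory for $(\Lambda,r_0)$-perimeter minimizers: by Lemma~\ref{lambda r nought lemma}(i) (or the analogous $\mathcal{A}_0$ statement obtained by the same volume-fixing-variations construction; cf.\ \cite[Lemma 17.21, Example 21.3]{Mag12}), $S_\ell^0$ is a $(\Lambda,r_0)$-minimizer of perimeter in $B_{r/4}(x)$, so $\partial S_\ell^0\cap B_{r/4}(x)$ is a $C^{1,\alpha}$ curve. Here, however, the relevant comparison is against the whole cluster energy rather than a single set; because $B_{r/4}(x)$ meets only $S_\ell^0$ and $S_{\ell'}^0$, the energy contribution there is $(c_\ell+c_{\ell'})\mathcal{H}^1(\partial^*S_\ell^0\cap B_{r/4}(x))$, so minimality of $\S^0$ forces $\partial S_\ell^0\cap B_{r/4}(x)$ to minimize $\mathcal{H}^1$ subject to the fixed area constraint on $S_\ell^0$ (which, by the volume-fixing variations, can be traded for a $\Lambda$-penalization, the area being adjustable elsewhere at quadratic cost). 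A volume-constrained (equivalently $\Lambda$-almost) perimeter minimizer in the plane has boundary of constant curvature; the Euler--Lagrange equation gives constant generalized mean curvature, which in dimension one means the curve is an arc of a circle or a line segment.

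To conclude that it is in fact a \emph{segment} (as asserted) rather than a general constant-curvature arc, I would use the first-variation/stationarity condition at the triple-junction-free point: since no area constraint is active transversally to the interface once all but two chambers have been eliminated — more precisely, since the two-phase interface here separates $S_\ell^0$ from $S_{\ell'}^0$ and the only volume constraints are on $S_\ell^0,\dots,S_N^0$ but the interface can be translated normally while compensating the areas of $S_\ell^0$ and $S_{\ell'}^0$ against each other only if one allows $G$, which is absent for $\delta=0$ — the generalized curvature $\kappa$ must satisfy $c_\ell\kappa=0$ in the balance, forcing $\kappa=0$. Alternatively, and more robustly, one appeals to the known structure theorem for planar minimal clusters \cite{Morgan94,White85,Morgan98}: interfaces are arcs of constant curvature, and at a two-phase point with a straight tangent line the arc is determined by the curvature $\kappa_{\ell\ell'}$ satisfying the pressure relations \eqref{classical angle conditions intro}; for the ball problem with the trace constraint and no area constraint being violated, these reduce to $\kappa=0$. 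I expect the main obstacle to be precisely this last point — cleanly justifying why the constant curvature must vanish (i.e.\ the interface is a line segment, not merely a circular arc) — which requires carefully tracking which volume constraints are active near $x$; everything else is a routine combination of the infiltration lemma with standard regularity. Since the authors state that this corollary "may be proved as in Corollary~\ref{reduced boundary regularity delta positive} and Corollary~\ref{density corollary}", the intended argument is likely exactly the infiltration-plus-classical-regularity route above, with the segment conclusion coming from the $\delta=0$ Euler--Lagrange equations \eqref{curvature condition}--\eqref{classical angle conditions intro} specialized to this two-phase situation.
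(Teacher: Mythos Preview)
Your overall strategy is correct and matches the paper's intended argument: apply the $\delta=0$ infiltration lemma to clear out all chambers except $S_\ell^0$ and $S_{\ell'}^0$ in a small ball, then invoke classical regularity for the resulting two-phase problem.

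However, your justification for why the interface is a \emph{segment} rather than merely a constant-curvature arc is tangled and in places confuses the constraints of the two problem variants. The point is much simpler than you make it. For the ball problem $\mathcal{A}_0^h$ there are \emph{no area constraints whatsoever} --- the only constraints are the trace condition \eqref{trace constraint} and $|G|\le 0$. Once the infiltration lemma gives $B_{r/4}(x)\subset S_\ell^0\cup S_{\ell'}^0$ with $B_{r/4}(x)\subset B$, any local modification that swaps area between $S_\ell^0$ and $S_{\ell'}^0$ inside $B_{r/4}(x)$ (keeping $G=\emptyset$) produces an admissible competitor in $\mathcal{A}_0^h$. Hence $S_\ell^0$ is an \emph{unconstrained} local perimeter minimizer in $B_{r/4}(x)$, and its boundary there is a straight segment. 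There is no need for $(\Lambda,r_0)$-minimality, volume-fixing variations, Lagrange multipliers, or the pressure relations \eqref{classical angle conditions intro}; those devices belong to the area-constrained problem $\mathcal{A}_\delta^{\mm}$, which is not what this corollary addresses. Your sentence ``the only volume constraints are on $S_\ell^0,\dots,S_N^0$'' is simply false for $\mathcal{A}_0^h$, and the subsequent discussion of compensating areas via $G$ is irrelevant here.
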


\begin{lemma}[Upper Area and Perimeter Bounds]\label{upper perimeter bounds delta 0}
     If $\S^0$ minimizes $\mathcal{F}$ among $\mathcal{A}_0^h$, then there exists $\alpha_3>0$, $\alpha_4<1$, and $r_3>0$,  such that %if $x\in \partial S_\ell^0 \cap B$ for some $\ell$ or $x\in \partial B$ is a jump point of $h$, then
\begin{align} \label{perimeter density equation 2}
    \mathcal{F}(\S^0;B_r(x)) &\leq \alpha_3r\quad \forall\, r>0\,,\quad   x\in \mathbb{R}^2\,,
\end{align}
and
\begin{align}\label{upper area bound delta zero}
    |B_r(x) \cap S_\ell^0| \leq \alpha_4 \pi r^2 \quad \forall x\in \partial S_{\ell}^0\,,\quad r<r_3\,.
\end{align}
\end{lemma}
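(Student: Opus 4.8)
\emph{The plan} is to mimic the proofs of Corollaries \ref{reduced boundary regularity delta positive} and \ref{density corollary}, using Lemma \ref{infiltration lemma 2} in place of Lemma \ref{infiltration lemma}. The key simplification for the $\delta=0$ problem on the ball is the absence of a volume constraint: a competitor $\S'$ is admissible as soon as it has boundary trace $h$ and $S_0'=B^c$, because then $|G'|=0$ automatically, so $\S^0$ may be compared directly with competitors and no $(\Lambda,r_0)$-type lemma is needed. Two reductions trivialize part of the statement. First, $\mathcal{F}(\S^0;B_r(x))\le\mathcal{F}(\S^0)$, and $\mathcal{F}(\S^0)$ is bounded by a constant depending only on $h$ (compare with any fixed admissible cluster and invoke \eqref{energy independent of boundary}), so \eqref{perimeter density equation 2} holds for $r\ge 1$ with $\alpha_3$ at least that constant. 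Second, by smoothness of $\partial B$ there is $r_3>0$ with $|B_r(x)\cap B|\le\tfrac34\pi r^2$ for all $x\in\partial B$ and $r<r_3$; since $S_\ell^0\subset B$ for $\ell\ge1$ and $S_0^0=B^c$ with $\partial S_0^0=\partial B$, this already gives \eqref{upper area bound delta zero} whenever $x\in\partial B$, so it remains to treat small radii with $x$ in the interior.

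\emph{For \eqref{perimeter density equation 2}} I would argue by comparison on $B_r(x)$, $r<1$, in three cases. If $B_r(x)\cap B=\emptyset$ then $\mathcal{F}(\S^0;B_r(x))=0$. If $B_r(x)\subset B$, compare $\S^0$ with the cluster obtained by absorbing $B_r(x)$ into the chamber of largest density there and deleting it from the others; this is admissible, and for all but countably many $r$ its only new $\mathcal{F}$-energy, carried by $\partial B_r(x)$, is at most $4\pi r\max_\ell c_\ell$, so minimality gives $\mathcal{F}(\S^0;B_r(x))\le 4\pi r\max_\ell c_\ell$, and the remaining radii follow by letting $s\uparrow r$ through good radii. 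If $B_r(x)\cap\partial B\ne\emptyset$, note $\Omega:=B\cap B_r(x)$ is convex, hence star-shaped about any interior point $p$, and replace $\S^0$ on $\Omega$ by the cone from $p$ over the partition of $\partial\Omega$ prescribed by $h$ on $\partial B\cap B_r(x)$ and by a single chamber on $\partial B_r(x)\cap B$; the cone's interface is at most $J_h+2$ radial segments of length $\le\diam\Omega\le2r$ ($J_h$ the number of jump points of $h$), and there is one extra jump of $\mathcal{H}^1$-mass $\le\mathcal{H}^1(\partial B_r(x))$ on $\partial B_r(x)\cap B$, so the added energy is $\le C(h)r$ and minimality gives $\mathcal{F}(\S^0;B_r(x))\le C(h)r$. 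Taking $\alpha_3$ to be the largest constant produced finishes \eqref{perimeter density equation 2}.

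\emph{For \eqref{upper area bound delta zero}} let $x\in\partial S_\ell^0\cap B$, $1\le\ell\le N$, and suppose $|B_r(x)\cap S_\ell^0|>\alpha_4\pi r^2$ for some $r<r_3$. Since $S_\ell^0\subset B$, then $|B_r(x)\setminus B|<(1-\alpha_4)\pi r^2$, and for $1-\alpha_4$ small the smoothness of $\partial B$ forces $\dist(x,\partial B)>r/2$ (the geometric step used in the proof of Corollary \ref{density corollary}), so $B_{r/2}(x)\subset B$. Then $|B_{r/2}(x)\setminus S_\ell^0|\le|B_r(x)\setminus S_\ell^0|<(1-\alpha_4)\pi r^2\le\varepsilon_0(r/2)^2$ provided $1-\alpha_4\le\varepsilon_0/(4\pi)$, with $\varepsilon_0$ the constant of Lemma \ref{infiltration lemma 2}; hence for every $m\ne\ell$, $|B_{r/2}(x)\setminus(S_\ell^0\cup S_m^0)|\le\varepsilon_0(r/2)^2$, and Lemma \ref{infiltration lemma 2} at scale $r/2$ (which is $<r_*$ once $r_3\le2r_*$, and for which the exterior chamber is irrelevant since $B_{r/2}(x)\subset B$) yields $|B_{r/8}(x)\setminus(S_\ell^0\cup S_m^0)|=0$. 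Intersecting over $m\ne\ell$ and using that the $S_m^0$ are pairwise disjoint with $N\ge2$, whence $\bigcap_{m\ne\ell}(S_\ell^0\cup S_m^0)=S_\ell^0$ up to a null set, gives $|B_{r/8}(x)\setminus S_\ell^0|=0$, i.e. $|S_\ell^0\cap B_{r/8}(x)|=\pi(r/8)^2$, contradicting $x\in\partial S_\ell^0$ via \eqref{boundary convention 2}. This proves \eqref{upper area bound delta zero} with a suitable $\alpha_4<1$ (taken $\ge\tfrac34$ to also cover $x\in\partial B$) and $r_3$.

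\emph{The main obstacle} is the interaction with $\partial B$: first, extracting from near-maximal density of $S_\ell^0$ at $x$ together with $S_\ell^0\subset B$ that $x$ sits a definite (relative) distance from $\partial B$, so an interior ball is available for Lemma \ref{infiltration lemma 2}; and second, in the perimeter estimate near $\partial B$, constructing a competitor honouring the boundary trace $h$, which is what makes $\alpha_3$ depend on $h$. The rest is the standard cluster density estimate, together with the elementary observation that the pairwise statement of Lemma \ref{infiltration lemma 2}, intersected over all partners, upgrades to a single-chamber statement precisely when $N\ge2$.
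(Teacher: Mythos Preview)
Your proposal is correct and follows precisely the strategy the paper points to, namely adapting the proofs of Corollaries~\ref{reduced boundary regularity delta positive} and~\ref{density corollary}; the paper provides no further details. Your intersection argument---applying the pairwise infiltration Lemma~\ref{infiltration lemma 2} for each $m\neq\ell$ and then intersecting over $m$ to upgrade to a single-chamber conclusion---is exactly the modification needed in passing from $\delta>0$ to $\delta=0$, since only a pairwise elimination statement is available in the latter case, and your cone construction near $\partial B$ correctly supplies the competitor the paper omits.
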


\subsection{Monotonicity formula} This is the last technical tool necessary for obtaining blow-up cones.
\begin{theorem}[Monotonicity Formula]\label{mon formula}
   If $\S^\delta$ minimizes $\mathcal{F}$ among $\mathcal{A}_\delta^h$ for $\delta\geq 0$ or $\mathcal{A}_\delta^{\mm}$ for $\delta> 0$, then there exists $\Lambda_0\geq 0$ such that if $x\in \mathbb{R}^2$,
   %$x\in \partial S_\ell^\delta \cap (B \cup \{\textup{jump points of }h\})$ or $x\in \partial S_{\ell}^\delta$, respectively, then there exists $r_{x}>0$ such that
\begin{align}\label{mon equation}
  \sum_{\ell=0}^N\frac{c_\ell}{2}
  &\int_{\partial^* S_\ell^\delta \cap (B_r(x) \setminus B_s(x))}\frac{((y-x)\cdot \nu_{S_\ell^\delta})^2}{|y-x|^3} \,d\mathcal{H}^1(y)\leq  \frac{\mathcal{F}(\S^\delta;B_r(x))}{r} - \frac{\mathcal{F}(\S^\delta;B_s(x))}{s} +\Lambda_0 (r-s)
  %\frac{1}{r}\sum_{\ell=0}^N c_\ell P(S_\ell^\delta; B_r(x))- \frac{1}{s}\sum_{\ell=0}^N c_\ell P(S_\ell^\delta; B_s(x))+ \Lambda_0\pi (r-s)
\end{align}
for any $0<s<r<r_x$.%, where $\Lambda$ is from Lemma \ref{lambda r nought lemma} when $\delta>0$ or $\Lambda=0$ in the case for $\mathcal{A}_0^h$.
\end{theorem}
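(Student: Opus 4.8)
The plan is to derive the monotonicity formula by comparing $\S^\delta$ against the competitor obtained by "coning off" all the chambers inside $B_r(x)$, exactly as in the classical Almgren/Allard monotonicity argument for almost-minimizing clusters, but keeping careful track of the volume penalty term supplied by Lemma \ref{lambda r nought lemma}. Without loss of generality take $x=0$ and write $B_r=B_r(0)$. First I would set $f(r):=\mathcal{F}(\S^\delta;B_r)$, which by the coarea formula and the fact that $\mathcal{H}^1\mres\partial^*S_\ell^\delta$ is a Radon measure is differentiable for a.e. $r<r_0$, with $f'(r)=\sum_\ell c_\ell \mathcal{H}^0(\partial^*S_\ell^\delta\cap\partial B_r)$ for a.e. $r$, and for all but countably many $r$ we also have $\mathcal{H}^1(\partial^*S_\ell^\delta\cap\partial B_r)=0$, so that the slices behave well.

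Next, fix such a good radius $r$. For each $\ell$ let $S_\ell'$ be the cone over $S_\ell^\delta\cap\partial B_r$ inside $B_r$, i.e. $S_\ell'\cap B_r=\{ty:\,t\in[0,1),\,y\in S_\ell^\delta\cap\partial B_r\}$, and $S_\ell'=S_\ell^\delta$ outside $B_r$; set $G'=B\setminus\cup_\ell S_\ell'$ (and on $\mathbb R^2$ set $G'$ analogously). One checks that $\{S_\ell'\}$ is again a partition, that it still satisfies the trace condition \eqref{trace constraint} (respectively is admissible up to the area defect) since the modification is supported in $B_r\subset\subset B$ for $r<r_0$, and that
\begin{align}\notag
\mathcal{F}(\S';B_r)=\sum_{\ell=0}^N c_\ell\,\frac{r}{1}\cdot\frac{\mathcal{H}^0(\partial^*S_\ell^\delta\cap\partial B_r)}{1}\cdot\frac12\cdot 2 = r\sum_{\ell=0}^N c_\ell\,\mathcal{H}^0(\partial^*S_\ell^\delta\cap\partial B_r)+ \text{(radial boundary contributions)}.
\end{align}
More precisely, the perimeter of the cone inside $B_r$ contributed by $S_\ell'$ equals $r\,\mathcal{H}^0(\partial^*S_\ell^\delta\cap\partial B_r)$ coming from the radial segments, so $\mathcal{F}(\S';B_r)=r\,f'(r)$ up to the a.e.\ identification above. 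The volume defect satisfies $\big||G^\delta|-|G'|\big|\le |G^\delta\cap B_r|+|G'\cap B_r|\le 2\pi r^2$ (and similarly $\sum_\ell\big||S_\ell^\delta|-|S_\ell'|\big|\le C r^2$ on $\mathbb R^2$, using that a cone over a slice of $\partial B_r$ has area $\le \pi r^2$). Plugging into \eqref{lambda r nought inequality} (or \eqref{lambda r nought inequality volume version}) gives
\begin{align}\notag
f(r)\le r f'(r) + 2\Lambda\pi r^2\,,
\end{align}
which rearranges to $\frac{d}{dr}\big(f(r)/r\big)\ge -2\Lambda\pi$, i.e. $r\mapsto f(r)/r + 2\Lambda\pi r$ is nondecreasing; this already yields the right-hand side of \eqref{mon equation} with $\Lambda_0=2\Lambda\pi$ after integrating from $s$ to $r$.

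To upgrade the monotonicity to the full inequality \eqref{mon equation} with the "deviation from being a cone" term on the left, I would instead run the comparison more carefully and retain the full expansion of the perimeter of the competitor. The standard computation (see \cite[Chapter 28]{Mag12} or the Allard-type argument) is that for a.e.\ $r$,
\begin{align}\notag
\mathcal{F}(\S^\delta;B_r)=\sum_{\ell=0}^N c_\ell\int_{\partial^*S_\ell^\delta\cap B_r}\frac{(y\cdot\nu_{S_\ell^\delta})^2 + (y\cdot\tau)^2}{|y|^2}\,d\mathcal{H}^1 = \text{(tangential part)} + \sum_\ell c_\ell\int_{\partial^*S_\ell^\delta\cap B_r}\frac{(y\cdot\nu_{S_\ell^\delta})^2}{|y|^2}\,d\mathcal{H}^1\,,
\end{align}
and the derivative identity combined with the cone competitor inequality produces, after integrating and using the coarea formula to convert $\int\!dr$ of the normal-direction integrand into $\int \frac{(y\cdot\nu)^2}{|y|^3}\,d\mathcal{H}^1$,
\begin{align}\notag
\sum_{\ell=0}^N\frac{c_\ell}{2}\int_{\partial^*S_\ell^\delta\cap(B_r\setminus B_s)}\frac{(y\cdot\nu_{S_\ell^\delta})^2}{|y|^3}\,d\mathcal{H}^1\le \frac{\mathcal{F}(\S^\delta;B_r)}{r}-\frac{\mathcal{F}(\S^\delta;B_s)}{s}+\Lambda_0(r-s)\,.
\end{align}
The translation to an arbitrary center $x$ is immediate by replacing $y$ with $y-x$ throughout, and $r_x:=r_0$ from Lemma \ref{lambda r nought lemma}.

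The main obstacle is bookkeeping rather than conceptual: one must verify that the coning-off competitor genuinely lies in the perturbed admissible class to which Lemma \ref{lambda r nought lemma} applies — in particular that it is supported in a ball $B_r(x)$ with $r<r_0$, that it still satisfies the exact trace condition on $\partial B$ (true because the modification is interior), and that the area defect is $O(r^2)$ so that the penalty term is absorbed into $\Lambda_0(r-s)$ after dividing by $r$ — and that the a.e.\ differentiation of $f$ and the "all but countably many $r$" identification of slices are handled simultaneously so that the differential inequality holds for a.e.\ $r$ and then integrates. Care is also needed in the $\mathcal{A}_\delta^{\mm}$ case to ensure the area-defect bound $\sum_\ell \big||S_\ell^\delta|-|S_\ell'|\big|\le Cr^2$, which again follows since each cone over $S_\ell^\delta\cap\partial B_r$ has area at most $\pi r^2$ and the total defect telescopes over $\ell$.
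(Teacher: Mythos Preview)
Your proposal follows the same cone-comparison strategy as the paper and is essentially correct for interior points $x\in B$ (and for the $\mathcal{A}_\delta^{\mm}$ problem), but there is a genuine gap when $x\in\partial B$ in the $\mathcal{A}_\delta^h$ case. You assert that the modification ``is supported in $B_r\subset\!\subset B$ for $r<r_0$'' and hence automatically preserves the trace condition; this is false at a boundary point, since $B_r(x)$ is never compactly contained in $B$ when $x\in\partial B$. A naive cone over $S_\ell^\delta\cap\partial B_r(x)$ will in general not satisfy \eqref{trace constraint} on $\partial B\cap B_r(x)$, so your competitor is not admissible for Lemma \ref{lambda r nought lemma} and the differential inequality cannot be derived there. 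Since the monotonicity formula is needed precisely to produce blow-up cones at boundary interface points (Theorem \ref{existence of cones}.(vi)), this case cannot be set aside.

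The paper identifies the jump-point case $x\in\partial B$ as the only delicate one and handles it in two steps. First, it splits off the contribution of the energy along $\partial B$, which satisfies its own monotonicity by the smoothness of the circle. Second, and this is the key construction you are missing, it builds the cone competitor $\tilde{\S}$ so that the cones over the two chambers meeting at the jump point are augmented by the adjacent circular segments $C_i^{\ell_0}$, $C_j^{\ell_1}$ from \eqref{circular segments}; the inclusion \eqref{circular segment containment} then guarantees that $\tilde{\S}$ still satisfies the trace constraint, and Lemma \ref{lambda r nought lemma} can be applied. With that modification the rest of your argument goes through.
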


\begin{proof}
We consider the case $\S^\delta$ is minimal among $\mathcal{A}_\delta^h$ and $x\in \partial B$ is a jump point of $h$; the other cases are simpler since the trace constraint may be avoided. First, we observe that by the smoothness of the circle, there exists $\Lambda'>0$ such that
\begin{align}\notag
    \sum_{\ell=0}^N\frac{c_\ell}{2}
  \int_{\partial^* S_\ell^\delta \cap (B_r(x) \setminus B_s(x)\cap \partial B)}\frac{((y-x)\cdot \nu_{S_\ell^\delta})^2}{|y-x|^3} \,d\mathcal{H}^1(y)&\leq  \frac{\mathcal{F}(\S^\delta;B_r(x) \cap \partial B)}{r} - \frac{\mathcal{F}(\S^\delta;B_s(x)\cap \partial B)}{s} \\ \notag
  &\qquad +\Lambda'\pi (r-s) \quad \forall 0<s<r<r_x
\end{align}
for some $r_x$; that is, we have the desired monotonicity for energy along $\partial B$. For the remainder of the proof, we therefore focus on the energy inside $B$. We define the increasing function 
\begin{align}\label{p definition}
    p(r) := \sum_{\ell=1}^N c_\ell P(S_\ell^\delta ; B_r(x)\cap B) = \mathcal{F}(\S;B_r(x) \cap B)
\end{align}
where, since it will be clear by context, we have suppressed the dependence of $p$ on $x$. The proof requires two steps: first, deriving a differential inequality for $p$ using comparison with cones (see \eqref{testing against cones}), and second, integrating and employing a slicing argument. The computations in the second step are the same as those in the proof of the monotonicity formula for almost minimizing integer rectifiable currents \cite[Proposition 2.1]{DeLSpaSpo17}, so we omit them. 
\par
We prove that given $x\in \partial B$ which is a jump point of $h$, there exists $r_{x}>0$ such that
\begin{align}\label{testing against cones}
   \frac{p(r)}{r^2} \leq  \frac{1}{r}\sum_{\ell=1}^N c_\ell \mathcal{H}^0(\partial^* S_\ell^\delta \cap \partial B_r(x) \cap B)+ \Lambda \pi \quad\textup{ for a.e. }r<r_x\,,
\end{align}
where $\Lambda$ is from Lemma \ref{lambda r nought lemma}. As mentioned above, the monotonicity formula can then be derived from \eqref{testing against cones}. %Since the case where $x\in \partial S_\ell^\delta \cap B$ is a simplified version of the argument when $x\in \partial B$ is a jump point of $h$ (one can ignore $\partial B$ and hence the trace condition \eqref{trace constraint} by taking $r_x$ small enough if $x\in B$), we focus on the latter case.
%\par
%\medskip
For concreteness, suppose that $h$ jumps between $1$ and $2$ at $x$. Then, recalling \eqref{chord def} there are chords $c_i^1$ and $c_j^2$ connecting $x$ to the nearest jump points on either side and corresponding circular segments $C_i^1$ and $C_j^2$. Let $0<r_x<r_0$ be small enough such that $\cl B_{r_x}(x)$ intersects no other circular segments from \eqref{circular segments} other  than those two. %Then for any $r<r_x$, $c_i^1 \cap B_r(x)$ and $c_j^1 \cap B_r(x)$ are radii of the ball $B_r(x)$. 
By the inclusion \eqref{circular segment containment} for the minimizer and our choice of $r_x$, for every $r<r_x$, 
\begin{align}\label{circular segment containment on small circle}
    C_i^1 \cap \cl B_r(x) \subset S_1^\delta\,,\quad  C_j^2 \cap \cl B_r(x) \subset S_2^\delta\,,\quad\textup{and}\quad \partial B_r(x) \cap \partial B \subset \partial C_i^1 \cup \partial C_j^2 \,.
\end{align}
For $r<r_x$ to be specified momentarily, we consider the cluster $\tilde{\S}$ defined by
\begin{align}\notag
    \tilde{S}_1 &= (S_1^\delta \setminus \cl B_r(x)) \cup \{y\in B_r(x) \setminus C_i^1 :  x + r(y-x)/|y-x| \in S_1^\delta\} \cup C_i^1\,, \\ \notag
    \tilde{S}_2 &= (S_2^\delta \setminus \cl B_r(x)) \cup \{y\in B_r(x) \setminus C_j^2 :  x + r(y-x)/|y-x| \in S_2^\delta\} \cup C_j^2\,, \\ \notag
    \tilde{S}_\ell &= (S_\ell^\delta \setminus \cl B_r(x)) \cup \{y\in B_r(x) :  x + r(y-x)/|y-x| \in S_\ell^\delta\}\,,\quad 3\leq \ell \leq N\,. 
\end{align}
Note that by \eqref{circular segment containment on small circle}, each $\partial \tilde{S}_\ell \cap B_r(x)$ consists of radii of $B_r(x)$ contained in $B_r(x) \setminus (C_i^1 \cup C_j^2)$. Then by Lemma \ref{slicing lemma}, for almost every $r<r_x$, each $\tilde{S}_\ell$ is a set of finite perimeter and
\begin{align}\notag
    \sum_{\ell=1}^N c_\ell P(\tilde{S}_\ell; B )&= \sum_{\ell=1}^N c_\ell P(\tilde{S}_\ell; B_r(x) \cap B) + \sum_{\ell=1}^N c_\ell P(\tilde{S}_\ell; B \setminus \cl B_r(x)) \\ \label{cone perimeter}
    &= r \sum_{\ell=1}^N c_\ell \mathcal{H}^0(\partial^* S_\ell^\delta \cap \partial B_r(x) \cap B) + \sum_{\ell=1}^N c_\ell P(S_\ell^\delta; B \setminus \cl B_r(x))\,.
\end{align}
Also, by \eqref{circular segment containment on small circle} and our definition of the $\tilde{\S}$, $\tilde{\S}$ satisfies the trace condition \eqref{trace constraint}. If we set $\tilde{G} = B \setminus (\cup_\ell \tilde{S}_\ell)$, then we can plug \eqref{cone perimeter} into the comparison inequality \eqref{lambda r nought inequality} from Lemma \ref{lambda r nought lemma} and cancel like terms, yielding  %up to decreasing $r_x$ so that $\Lambda \pi r_x^2 \leq \Lambda r_x$,
\begin{align} \notag
    p(r)=\sum_{\ell=1}^N c_\ell P(S_\ell^\delta; B \cap B_r(x)) % \\ \notag
    %&\leq r \sum_{\ell=1}^N c_\ell \mathcal{H}^0(\partial^* S_\ell^\delta \cap \partial B_r(x) \cap B) + \Lambda \big||\tilde{G}| - |G^\delta|  \big| \\ \notag
    \leq r \sum_{\ell=1}^N c_\ell \mathcal{H}^0(\partial^* S_\ell^\delta \cap \partial B_r(x) \cap B) + \Lambda \pi r^2 \quad\textup{for a.e. $r<r_x$}\,.
\end{align} 
This is precisely \eqref{testing against cones} multiplied by $r^2$.
\end{proof}

\subsection{Existence of blow-up cones}

The monotonicity formula allows us to identify blow-up minimal cones at interfacial points of a minimizer. It will be convenient to identify interfacial points for minimizers among $\mathcal{A}_\delta^{\mm}$ with interfacial points in $B$ for minimizers among $\mathcal{A}_\delta^h$, since, at the level of  blow-ups, the behavior is the same.

\begin{definition}[Interior and boundary interface points]\label{int bound int pt def}
    If $\mathcal{S}^\delta$ is minimal among $\mathcal{A}_\delta^h$ and $x\in B \cap \partial S_\ell^\delta$ or $\mathcal{S}^\delta$ is minimal among $\mathcal{A}_\delta^{\mm}$ and $x\in \partial S_\ell^\delta$ for some $\ell$, we say $x$ is an {\bf interior interface point}. If $\mathcal{S}^\delta$ is minimal among $\mathcal{A}_\delta^h$ and $x\in \partial B$, we call $x$ a {\bf boundary interface point}.
\end{definition}

The blow-ups at a boundary interface point will be minimal in a halfspace among competitors satisfying a constraint coming from the trace condition \eqref{trace constraint} and the inclusion \eqref{circular segment containment} from Theorem \ref{existence theorem}.

\begin{definition}[Admissible blow-ups at jump points of $h$]\label{boundary blow up definition}
Let $x\in \partial B$ be a jump point of $h$, and let $C_i^{\ell_0}$ and $C_j^{\ell_1}$ be the circular segments from \eqref{circular segments} meeting at $x$. Let
\begin{align}\label{circular segment blowup}
    C_\infty^{\ell_0}=\bigcup_{\lambda >0} \lambda (C_i^{\ell_0}-x)\,,\quad  C_\infty^{\ell_1}=\bigcup_{\lambda >0} \lambda (C_j^{\ell_1}-x)
\end{align}
be the blow-ups of the convex sets $C_i^{\ell_0}$ and $C_j^{\ell_1}$ at their common boundary point $x$. We define
\begin{align}\label{ax definition}
    \mathcal{A}_x:=\{\S : \forall \ell \neq 0,\,S_\ell \subset \{y \cdot x < 0 \}=S_0^c\,,\,\,S_\ell \cap S_{\ell'}=\emptyset \textup{ if }\ell\neq \ell'\,,\,\,C_\infty^{\ell_k}\subset S_{\ell_k}\textup{ for }k=0,1\}\,.
\end{align}
\end{definition}

\begin{theorem}[Existence of Blow-up Cones]\label{existence of cones}
If $\S^\delta$ minimizes $\mathcal{F}$ among $\mathcal{A}_\delta^h$ for some $\delta \geq 0$ or among $\mathcal{A}_\delta^{\mm}$ for $\delta>0$, then for any sequence $r_j \to 0$, there exists a subsequence $r_{j_k}\to 0$ and cluster $\S=(S_0,\dots,S_N,G)$ partitioning $\mathbb{R}^2$, satisfying the following properties: 
\begin{enumerate}[label=(\roman*)]
\item $(S_\ell^\delta - x)/r_{j_k} \overset{L^1_\loc}{\to} 
 S_\ell$ for each $0\leq \ell\leq N$;
 \item $\mathcal{H}^1\mres (\partial S_\ell^\delta - x)/r_{j_k}\weakstar \mathcal{H}^1 \mres \partial S_\ell$ for each $0\leq \ell\leq N$;
\item $S_\ell$ is an open cone for each $0\leq \ell \leq N$;
\item if $x$ is an interior interface point and $\tilde{\S}$ is such that for $0\leq \ell \leq N$, $\tilde{S}_\ell\Delta  S_\ell \subset\!\subset B_R$ and, for the problem on the ball, $S_0=\emptyset$, then 
\begin{align}\label{interior cone is minimal}
\mathcal{F}(\S; B_R) \leq \mathcal{F}(\tilde{\S}; B_R);
\end{align}
\item if $x\in \partial S_{\ell_0}^\delta \cap \partial B$ is not a jump point of $h$, then $S_{\ell_0} = \{y: y\cdot x < 0 \}$ and $S_0=\{y:y\cdot x > 0\}$;
\item if $x\in \partial S_{\ell_0}^\delta \cap \partial B$ is a jump point of $h$, then $\S \in \mathcal{A}_x$ %and $C_\ell=\cup_{\lambda >0} \lambda C_i^\ell$, $C_{\ell'}=\cup_{\lambda >0} \lambda C_j^{\ell'}$ are the blowups of the corresponding , 
and if $\tilde{\S}\in \mathcal{A}_x$ is such that for $0\leq \ell \leq N$, $\tilde{S}_\ell\Delta  S_\ell \subset\!\subset B_R$, then 
\begin{align}\label{boundary cone is minimal}
\mathcal{F}(\S; B_R ) \leq \mathcal{F}(\tilde{\S}; B_R )\,.
\end{align}
\end{enumerate}
\end{theorem}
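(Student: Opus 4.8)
The plan is to obtain the blow-up cluster as a subsequential limit of the rescaled clusters $\mathcal{S}^\delta_{x,r_j} := \big((S_0^\delta - x)/r_j, \ldots, (S_N^\delta - x)/r_j, (G^\delta - x)/r_j\big)$ and then verify each of the six properties using, in order, compactness, the monotonicity formula, and the perimeter-almost-minimality from Lemma \ref{lambda r nought lemma}. First I would use the uniform perimeter density bounds \eqref{perimeter density equation} (or \eqref{perimeter density equation 2} when $\delta=0$) to see that the rescaled chambers have locally uniformly bounded perimeter in any $B_R$, so by $BV$ compactness and a diagonal argument over an exhaustion $R=1,2,\dots$, we extract a further subsequence along which $(i)$ holds, i.e. each $(S_\ell^\delta - x)/r_{j_k} \to S_\ell$ in $L^1_{\loc}$; passing to the limit in the partition condition shows $(S_0,\dots,S_N,G)$ partitions $\mathbb{R}^2$. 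For $(ii)$, the measures $\mathcal{H}^1 \mres (\partial S_\ell^\delta - x)/r_{j_k}$ have locally bounded mass, so after a further subsequence they converge weakly-$*$ to some Radon measure $\mu_\ell$; lower semicontinuity gives $\mathcal{H}^1 \mres \partial S_\ell \leq \mu_\ell$, and the upper density estimate together with the density estimates of Corollary \ref{density corollary} (upgrading from $\partial^* S_\ell$ to $\partial S_\ell$ via $\mathcal{H}^1(\partial S_\ell \setminus \partial^* S_\ell)=0$) rules out excess mass, so $\mu_\ell = \mathcal{H}^1 \mres \partial S_\ell$.

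Next, for $(iii)$, I would apply the monotonicity formula of Theorem \ref{mon formula}: the density ratio $\mathcal{F}(\mathcal{S}^\delta; B_r(x))/r + \Lambda_0 r$ is monotone in $r$, hence has a limit $\Theta$ as $r \downarrow 0$; after rescaling, this forces the limit cluster to have constant density ratio equal to $\Theta$ on all scales, and the excess term on the left of \eqref{mon equation} then vanishes in the limit, which by the standard argument (the integrand $((y)\cdot \nu)^2/|y|^3$ vanishing $\mathcal{H}^1$-a.e. on each $\partial^* S_\ell$) means each $S_\ell$ is invariant under dilations, i.e. is a cone; combined with Corollary \ref{density corollary} the $(S_\ell)^{(1)}$ and $(G)^{(1)}$ are open. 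For the minimality property $(iv)$, I would test minimality of $\mathcal{S}^\delta$ at scale $r_{j_k}$ against the cluster obtained by rescaling a competitor $\tilde{\mathcal{S}}$ with $\tilde{S}_\ell \Delta S_\ell \subset\!\subset B_R$: by Lemma \ref{lambda r nought lemma}, $\mathcal{F}(\mathcal{S}^\delta; B_{Rr_{j_k}}(x)) \leq \mathcal{F}(\text{competitor}; B_{Rr_{j_k}}(x)) + \Lambda |G^\delta \Delta G'|$ (or the volume version), and after dividing by $r_{j_k}$ the error term is $O(r_{j_k}^2)/r_{j_k} \to 0$ since the symmetric differences live in a ball of radius $O(r_{j_k})$; using $(ii)$ to pass to the limit in the energies (with care that no perimeter concentrates on $\partial B_R$, which holds for a.e. $R$) gives \eqref{interior cone is minimal}. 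Here one must also handle the global area/trace constraints: near an interior interface point the constraints can be restored at negligible cost via the volume-fixing variations already packaged into Lemma \ref{lambda r nought lemma}, and for $\mathcal{A}_\delta^h$ with $x \in B$ the trace condition is untouched, so one simply takes $S_0 = \emptyset$ in the blow-up.

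For $(v)$ and $(vi)$, when $x \in \partial B$ I would first rescale: the circle $\partial B$ blows up to the line $\{y \cdot x = 0\}$ and the exterior $S_0^\delta = B^c$ blows up to the halfplane $\{y \cdot x > 0\}$, so $S_0 = \{y\cdot x > 0\}$ and all other $S_\ell \subset \{y \cdot x < 0\}$. If $x$ is not a jump point of $h$ then $\partial B \cap B_\rho(x) \subset \{h = \ell_0\}$ for small $\rho$, and the lower density bound \eqref{circular segment density assumption} forces $S_{\ell_0}^\delta$ to occupy a definite fraction of every small ball at $x$ while the infiltration Lemma \ref{infiltration lemma} (or \ref{infiltration lemma 2}) eliminates every other chamber near $x$; blowing up gives $S_{\ell_0} = \{y\cdot x < 0\}$, proving $(v)$. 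If $x$ is a jump point between $\ell_0$ and $\ell_1$, the inclusion \eqref{circular segment containment} from Theorem \ref{existence theorem} rescales to $C_\infty^{\ell_0} \subset S_{\ell_0}$ and $C_\infty^{\ell_1} \subset S_{\ell_1}$ (using the definition \eqref{circular segment blowup}), so $\mathcal{S} \in \mathcal{A}_x$; the minimality \eqref{boundary cone is minimal} against competitors in $\mathcal{A}_x$ follows exactly as in $(iv)$, now invoking part $(i)$ of Lemma \ref{lambda r nought lemma} and noting that membership in $\mathcal{A}_x$ is precisely what is needed for the rescaled competitor to be admissible (it satisfies the trace and circular-segment constraints at scale $r_{j_k}$).

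The main obstacle is the treatment of the constraints in the passage to the limit, especially for the problem on the ball at a jump point of $h$: one must check that a competitor $\tilde{\mathcal{S}} \in \mathcal{A}_x$ with $\tilde{S}_\ell \Delta S_\ell \subset\!\subset B_R$ can be "un-rescaled" to a genuine admissible competitor for $\mathcal{S}^\delta$ at scale $r_{j_k}$ — this requires reconciling the exact trace condition \eqref{trace constraint} on the curved boundary $\partial B$ with the flat-boundary competitor, which is where the inclusion \eqref{circular segment containment} and the choice of $r_x$ in Theorem \ref{mon formula} (so that $B_{r_x}(x)$ meets only the two relevant circular segments) do the work; the perimeter bookkeeping on $\partial B \cap B_{Rr_{j_k}}(x)$ contributes only $O(r_{j_k}^2)$ by the smoothness of the circle, exactly as in the proof of Theorem \ref{mon formula}.
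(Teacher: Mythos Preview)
Your proposal is correct and follows essentially the same approach as the paper: compactness from the perimeter density bounds for $(i)$, the monotonicity formula for the cone property $(iii)$, and passing the almost-minimality of Lemma~\ref{lambda r nought lemma} to the limit for $(iv)$ and $(vi)$, with the circular segment containment \eqref{circular segment containment} handling the boundary constraints. Two minor remarks: for $(v)$ the paper argues more directly---since $x$ is not a jump point, \eqref{circular segment containment} gives $B\cap B_{r_x}(x)\subset S_{\ell_0}^\delta$ outright, so no appeal to infiltration is needed; and for $(ii)$, the ``no excess mass'' step is not a consequence of density estimates alone but is intertwined with the minimality argument you give in $(iv)$ (this is exactly why the paper treats $(ii)$ and $(vi)$ together by citing the $(\Lambda,r_0)$-minimizer compactness theorem).
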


\begin{proof}
    When $x$ is a boundary interface point and is not a jump point of $h$, then $S_{\ell_0}^\delta \cap B \cap B_{r_x}(x)= B \cap B_{r_x}(x)$ for some $r_x>0$ by \eqref{circular segment containment} from Theorem \ref{existence theorem}. In this case, items $(i)$-$(iii)$ and $(v)$ are trivial. Also, the case of interior interface points is essentially a simpler version of the argument when $x\in \partial S_{\ell_0}^\delta  \cap \partial B$ is a jump point of $h$. Therefore, for the rest of the proof, we focus on items $(i)$-$(iii)$ and $(vi)$ when $x\in \partial B \cap \partial S_{\ell_0}^\delta$ is a jump point of $h$.
    \par
The upper perimeter bounds from Corollary \ref{density corollary} or Lemma \ref{upper perimeter bounds delta 0} and compactness in $BV$ give the existence of $r_{j_k}\to 0$ and $\S$ such that the convergence in $(i)$ holds. In addition, this compactness gives
\begin{align}\label{convergence of vector measures}
   \mu_k^\ell:= (\nu_{ S_\ell^\delta } \mathcal{H}^{1} \mres \partial S_\ell^\delta - x)/r_{j_k}\weakstar \nu_{S_\ell }\mathcal{H}^1 \mres \partial^* S_\ell=: \mu_\ell \quad \forall 0\leq \ell \leq N\,.
\end{align}
It is easy to check from the inclusion \eqref{circular segment containment} that $\S \in \mathcal{A}_x$. We now discuss in order \eqref{boundary cone is minimal}, $(ii)$, and $(iii)$. The proofs of \eqref{boundary cone is minimal} and $(ii)$ are standard compactness arguments that proceed mutatis mutandis as the proof of the compactness theorem for $(\Lambda,r_0)$-perimeter minimizers \cite[Theorem 21.14]{Mag12}. Finally, $(iii)$ follows from the monotonicity formula \eqref{mon equation}, which implies that $\mathcal{F}(\S;B_r)/r$ is constant in $r$, and the characterization of cones \cite[Proposition 28.8]{Mag12}. 
\end{proof}

\subsection{Classification of blow-up cones}\label{sec: classification of blowup cones}

We classify the possible blow-up cones for a minimizer using the terminology set forth in Definition \ref{int bound int pt def}. %When $\delta>0$, there will be no triple junctions at all in the interior of $B$, and any ``halfspace" triple junctions on $\partial B$ at jump points of $h$ only involve those two chambers corresponding to the jump values of $h$ along with the blow-up of $G^\delta$. 

\begin{theorem}[Classification of Blow-up Cones for $\delta>0$]\label{positive delta classification theorem}
    If $\S^\delta$ minimizes $\mathcal{F}$ among $\mathcal{A}_\delta^h$ or $\mathcal{A}_\delta^{\mm}$ for some $\delta > 0$, and $\S$ is a blow-up cluster for $x\in \partial S_{\ell_0}^\delta$ and some $r_{j}\to 0$, then exactly one of the following is true:
\begin{enumerate}[label=(\roman*)]
\item $x\in \partial S_{\ell_0}^\delta$ is an interior interface point and $S_{\ell_0}=\{y\cdot \nu_{S_{\ell_0}^\delta}(x)<0\}$, $G = \mathbb{R}^2 \setminus S_{\ell_0}$;
\item $x\in \partial S_{\ell_0}^\delta$ is an interior interface point, $S_{\ell_0}=\{y\cdot \nu <0\}$ for some $\nu \in \mathbb{S}^1$, and $S_{\ell_1}=\mathbb{R}^2 \setminus S_{\ell_0}$ for some $\ell_1\neq \ell_0$;
\item $x\in \partial S_{\ell_0}^\delta \cap \partial B$ is a boundary interface point and jump point of $h$, $S_{\ell_0}=\{y\cdot \nu <0,\, y \cdot x < 0\}$, and $S_{\ell_1}=\{y\cdot \nu >0,\, y \cdot x < 0\}$ for some $\nu \in \mathbb{S}^1$ and $\ell_1 \neq \ell_0$;
\item $x\in \partial S_{\ell_0}^\delta \cap \partial B$ is a boundary interface point and jump point of $h$, $S_{\ell_0}=\{y\cdot \nu_0 <0,\, y \cdot x < 0\}$, $S_{\ell_1}=\{y\cdot \nu_1 >0,\, y \cdot x < 0\}$, $S_0=\{y\cdot x >0\}$, and $G= (S_0 \cup S_{\ell_0}\cup S_{\ell_1})^c$ for some $\nu_0,\, \nu_1 \in \mathbb{S}^1$ and $\ell_1 \neq \ell_0$;
\item $x\in \partial S_{\ell_0}^\delta \cap \partial B$ is a boundary interface point, not a jump point of $h$, $S_{\ell_0}=\{y\cdot x <0\}=S_0^c$.
%{\color{blue}(In this last case, don't we also have the analogue of the angle inequality from the next theorem, i.e. the angle of the opening of $G\cap \{y:y\cdot x < 0\}$ can't be too small, otherwise the triple junction would migrate inside $\{y:y\cdot x <0\}$)}
\end{enumerate}
\end{theorem}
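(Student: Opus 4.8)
The plan is to leverage the existence of blow-up cones (Theorem \ref{existence of cones}) together with the fact that, at the blow-up level, no chamber $S_\ell$ can be anything other than a halfspace (intersected with the admissible halfspace $\{y\cdot x<0\}$ in the boundary case). First I would establish this key \emph{no-corner} fact: if $\S$ is a blow-up cone at an interior interface point $x$ and some $S_\ell$ has a corner at the origin (i.e.\ $S_\ell$ is not a halfspace), then one can smooth the corner of $S_\ell$ inside $B_R$, saving an amount of perimeter proportional to $R$, while redistributing the excised region to $G$. Since $G$'s perimeter is unpenalized and the cone-minimality inequality \eqref{interior cone is minimal} in Theorem \ref{existence of cones} places no volume constraint on $G$ (only the chambers $S_\ell$ are varied, and even those need not preserve area — the competitor class in $(iv)$ of Theorem \ref{existence of cones} only requires $\tilde S_\ell \Delta S_\ell \cc B_R$), this strictly lowers $\mathcal{F}(\S;B_R)$, contradicting minimality. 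This is the blow-up incarnation of the mechanism described below \eqref{trace constraint}. The same argument applies at a jump point of $h$ using \eqref{boundary cone is minimal}, with the competitor constrained to $\mathcal{A}_x$; the extra constraints $C_\infty^{\ell_k}\subset S_{\ell_k}$ are consistent with the chambers being halfplanes through the origin containing the respective circular-segment cones.

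Next I would combine the no-corner fact with the density estimates. By Corollary \ref{density corollary} (resp.\ Lemma \ref{upper perimeter bounds delta 0}), at $x\in\partial S_{\ell_0}^\delta$ we have a definite lower area density for $S_{\ell_0}$, so the blow-up cone $S_{\ell_0}$ is a nonempty open cone with a halfspace boundary; hence $S_{\ell_0}=\{y\cdot\nu<0\}$ in the interior case or $\{y\cdot\nu_0<0,\,y\cdot x<0\}$ in the boundary case. Now partition into cases according to how the complement of $S_{\ell_0}$ (within $\{y\cdot x<0\}$ when at a jump point) is filled. Either it is all $G$ — giving case $(i)$; or it meets some other chamber $S_{\ell_1}$. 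In the latter situation, $S_{\ell_1}$ is again (by its own lower density estimate along $\partial S_{\ell_1}$, or rather along $\partial^* S_{\ell_0}\cap\partial^* S_{\ell_1}$) a cone bounded by a halfspace, and the no-corner fact forces it to be an actual halfspace. Then I must rule out a \emph{third} chamber or a positive-measure $G$ filling the remaining wedge: if $S_{\ell_0}$ and $S_{\ell_1}$ are complementary halfspaces there is nothing left (cases $(ii)$ in the interior); at a jump point, $S_{\ell_0}$ and $S_{\ell_1}$ are halfplanes in $\{y\cdot x<0\}$, and either they are complementary within that halfplane — case $(iii)$ — or there remains a wedge, which by the infiltration/density estimates must then be $G$ (no third chamber can have positive density in it without violating the no-corner conclusion for that chamber) — case $(iv)$. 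Case $(v)$ is immediate from item $(v)$ of Theorem \ref{existence of cones} when $x\in\partial B$ is not a jump point of $h$.

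The remaining point is to verify that these cases are \emph{exhaustive and mutually exclusive}, i.e.\ that at an interior point exactly one of $(i),(ii)$ holds and at a boundary jump point exactly one of $(iii),(iv)$, and that a jump point of $h$ cannot produce an interior-type cone nor vice versa. Exclusivity follows by comparing which chambers have positive density at the origin; exhaustiveness follows because the only blow-up limits of chambers are halfspaces (by the no-corner argument) and $\{y\cdot x<0\}$ contains at most two of them with disjoint interiors, the residual region being forced to be $G$. The main obstacle I anticipate is making the corner-smoothing construction fully rigorous at the level of sets of finite perimeter: one must exhibit a competitor $\tilde\S$ with $\tilde S_\ell\Delta S_\ell\cc B_R$, verify $\tilde\S$ lies in the right competitor class (in particular $\tilde\S\in\mathcal{A}_x$ at a jump point, which requires the smoothing to respect the inclusions $C_\infty^{\ell_k}\subset\tilde S_{\ell_k}$), and quantify the strict perimeter decrease — this is the $O(R)$ saving versus no cost in $G$, and is precisely the argument of \cite[VI.10-12]{Alm76} transported to the cone setting. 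Everything else is bookkeeping with the density estimates already established.
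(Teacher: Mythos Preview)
Your approach is essentially the same as the paper's: the core of both is that no connected component of any chamber $S_\ell$ in the blow-up cone can be a sector of angle $\theta\neq\pi$, because one can modify near the vertex to save perimeter at no cost (since $G$ is unpenalized), contradicting \eqref{interior cone is minimal} or \eqref{boundary cone is minimal}. The paper's execution is slightly more elementary than what you sketch: rather than a ``smoothing'' construction or invoking Almgren's volume-fixing machinery, the paper simply takes the triangle $K=\mathrm{conv}\{0,c_1,c_2\}$ (where $c_1,c_2$ are the endpoints of the component on $\partial B$) and uses the ordinary triangle inequality to get a strict energy drop. Note also that your description ``excise the region to $G$'' only covers the convex case $\theta<\pi$; when $\theta>\pi$ the paper instead \emph{adds} $K$ to $S_\ell$ (taking it from the other chambers and $G$), and the triangle inequality again gives the contradiction --- you should make this second case explicit. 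Beyond that, your case analysis and the handling of the boundary case via the constraint $\S\in\mathcal{A}_x$ match the paper.
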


%\begin{remark}
%    In cases $(iii)$ and $(iv)$, we also must have of course $(S_1, S_2, S_3) \in \mathcal{A}_x$ by Theorem \ref{existence of cones}. In other words, $S_{\ell_0}$ and $S_{\ell_1}$ are large enough that they do not contradict the containment \eqref{circular segment containment} of the circular segments in $S_{\ell_0}^\delta$ and $S_{\ell_1}^\delta$. 
%\end{remark}

\begin{proof}[Proof of Theorem \ref{positive delta classification theorem}]
\textit{Step one}: In this step we consider an interior interface point $x$ and show that either $(i)$ or $(ii)$ holds. First, we note that since $x\in \partial S_{\ell_0}^\delta$ and the density estimates \eqref{area density equation} pass to the blow-up limit, $S_{\ell_0}\neq \emptyset$ and $S_{\ell_0} \neq\mathbb{R}^2$, so $\S$ is non-trivial. We claim that no non-empty connected component $S_\ell$ of $\S$ for $0\leq \ell \leq N$ can be anything other than a halfspace; from this claim it follows that $(i)$ or $(ii)$ holds. Indeed, suppose that there was such a component $C$, say of $S_1$, defined by an angle $\theta\neq \pi$ with $\partial C \cap \partial B=\{c_1,c_2\}$. Let $K$ be the convex hull of $c_1$, $c_2$, and $0$. If $\theta<\pi$, then the triangle inequality implies that the cluster $\mathcal{S}'=(S_0,S_1 \setminus K, S_2,\dots, S_N, G\cup K)$ satisfies $\mathcal{F}(\S';B_2)<\mathcal{F}(\S;B_2)$, contradicting the minimality property \eqref{interior cone is minimal}. On the other hand, if $\theta>\pi$, then the cluster $\S'=(S_0\setminus K,S_1 \cup K, S_2 \setminus K, \dots, S_N \setminus K, G\setminus K)$ also contradicts \eqref{interior cone is minimal} due to the triangle inequality. 
\par
\medskip
\noindent\textit{Step two}: Moving on to the case of a boundary interface point, we begin by observing that $(v)$ is trivial by \eqref{circular segment containment} when $x$ is not a jump point of $h$. If $x$ is a jump point of $h$, say between $h=1$ and $h=2$, then $S_0=\{y\cdot x > 0\}$, and $\{S_1,\dots,S_N,G\}$ partition $S_0^c$. Now the same argument as in the previous step using the triangle inequality shows that $S_\ell=\emptyset$ for $3 \leq \ell \leq N$ and $S_1$ and $S_2$ each only have one connected component bordering $S_0$. It follows that either $(iii)$ or $(iv)$ holds.\end{proof}

\begin{corollary}[Regularity for $\partial G^{\delta}$ away from $(G^\delta)^{(0)}$]\label{no termination corollary}
If $\S^\delta$ minimizes $\mathcal{F}$ among $\mathcal{A}_\delta^h$ or $\mathcal{A}_\delta^{\mm}$ for some $\delta > 0$ and $x$ is an interior interface point such that
\begin{align}\label{limsup assumption}
    \limsup_{r\to 0} \frac{|G^{\delta} \cap B_r(x)|}{\pi r^2} > 0\,,
\end{align}
then there exists $r_x>0$ such that $\partial G^{\delta}\cap B_{r_x}(x)$ is an arc of constant curvature dividing $B_{r_x}(x)$ into $G^{\delta} \cap B_{r_x}(x)$ and $S_{\ell}^\delta \cap B_{r_x}(x)$.
\end{corollary}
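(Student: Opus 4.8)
The plan is to invoke the blow-up classification from Theorem \ref{positive delta classification theorem} at the interface point $x$ and eliminate all but the cases in which $G^\delta$ is genuinely present at the blow-up scale. First I would observe that by \eqref{limsup assumption} there is a sequence $r_j\to 0$ along which $|G^\delta\cap B_{r_j}(x)|/(\pi r_j^2)$ stays bounded below; passing to a further subsequence via Theorem \ref{existence of cones}, we obtain a blow-up cluster $\S$ at $x$ along $r_j$ with $|G\cap B_1|>0$, so the limiting remnant $G$ is non-trivial. Since $x$ is an interior interface point, Theorem \ref{positive delta classification theorem} says $\S$ is one of the cones $(i)$ or $(ii)$ (the boundary cases $(iii)$--$(v)$ are excluded because $x\in B$). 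In case $(ii)$, $G=\emptyset$, contradicting $|G\cap B_1|>0$; hence we are in case $(i)$, i.e. $S_{\ell_0}=\{y\cdot\nu_{S_{\ell_0}^\delta}(x)<0\}$ and $G=\mathbb{R}^2\setminus S_{\ell_0}$ is the complementary halfspace. In particular, along this sequence,
\begin{align}\notag
    \lim_{j\to\infty}\frac{|(G^\delta\cup S_{\ell_0}^\delta)\cap B_{r_j}(x)|}{\pi r_j^2}=\frac{|(S_{\ell_0}\cup G)\cap B_1|}{\pi}=1\,.
\end{align}

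Next I would feed this into Corollary \ref{reduced boundary regularity delta positive}: the displayed limit is exactly hypothesis \eqref{sees 1/2} with $\ell=\ell_0$, so for $j$ large, $\partial G^\delta\cap\partial S_{\ell_0}^\delta\cap B_{r_j}(x)$ is an arc of constant curvature and $S_{\ell'}^\delta\cap B_{r_j}(x)=\emptyset$ for $\ell'\neq\ell_0$. Fixing one such large $j$ and setting $r_x=r_j$, we have $B_{r_x}(x)\subset S_{\ell_0}^\delta\cup G^\delta$, and the classical regularity theory for volume-constrained perimeter minimizers (via Lemma \ref{lambda r nought lemma}, which makes $S_{\ell_0}^\delta$ a $(\Lambda,r_0)$-minimizer in $B_{r_x}(x)$) shows that $\partial S_{\ell_0}^\delta\cap B_{r_x}(x)=\partial G^\delta\cap B_{r_x}(x)$ is a single $C^{1,1}$ arc of constant curvature splitting the ball into $S_{\ell_0}^\delta\cap B_{r_x}(x)$ and $G^\delta\cap B_{r_x}(x)$. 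This is precisely the asserted conclusion, so really the corollary is a short deduction from the classification theorem plus Corollary \ref{reduced boundary regularity delta positive}.

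The one point requiring a bit of care—and the place I'd expect the only genuine friction—is upgrading the $\limsup$ hypothesis \eqref{limsup assumption} to the statement that the blow-up remnant $G$ is non-trivial, and then knowing that the \emph{same} radius $r_x$ both realizes the near-halfspace density and lies in the regime where Corollary \ref{reduced boundary regularity delta positive} applies. For the first, one uses that $\mathbf 1_{(G^\delta-x)/r_j}\to\mathbf 1_G$ in $L^1_{\loc}$ (part $(i)$ of Theorem \ref{existence of cones} applied to the remnant, or equivalently to $(\cup_\ell S_\ell^\delta)^c$), so $|G\cap B_1|=\lim_j|G^\delta\cap B_{r_j}(x)|/(\pi r_j^2)\geq\limsup_r|G^\delta\cap B_r(x)|/(\pi r^2)>0$ after choosing $r_j$ to nearly attain the $\limsup$. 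For the second, since the density of $G^\delta\cup S_{\ell_0}^\delta$ at scale $r_j$ converges to $1$, it exceeds $1-\eps_0/\pi$ for all large $j$, which is exactly the infiltration threshold in Lemma \ref{infiltration lemma} guaranteeing $B_{r_j/4}(x)\subset S_{\ell_0}^\delta\cup G^\delta$; after relabeling $r_x:=r_j/4$ for such a $j$ (and noting the arc-of-constant-curvature conclusion is then classical on $B_{r_x}(x)$), the proof concludes. No new estimates are needed beyond what is already established.
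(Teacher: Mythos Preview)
Your proposal is correct and follows essentially the same approach as the paper: pick a sequence $r_j\to 0$ realizing the $\limsup$, observe that the resulting blow-up must fall under case $(i)$ of Theorem \ref{positive delta classification theorem} (since case $(ii)$ has $G=\emptyset$), and then invoke Corollary \ref{reduced boundary regularity delta positive}. The paper's proof is simply a two-sentence compression of exactly this argument. (One cosmetic slip: your equality $|G\cap B_1|=\lim_j|G^\delta\cap B_{r_j}(x)|/(\pi r_j^2)$ is off by a factor of $\pi$, but only positivity is needed so the reasoning is unaffected.)
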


\begin{proof}
    If $r_j\to 0$ is a sequence achieving the limit superior in \eqref{limsup assumption}, then any subsequential blow-up at $x$ must be characterized by case $(i)$ of Theorem \ref{positive delta classification theorem}. The desired conclusion now follows from Corollary \ref{reduced boundary regularity delta positive}. 
\end{proof}

Lastly, we classify blow-up cones for $\delta=0$ when either $N=3$ or the weights are equal.

\begin{theorem}[Classification of Blow-up Cones for $\delta=0$ on the Ball]\label{zero delta classification theorem}
    If $N=3$ or $c_\ell=1$ for $0\leq \ell \leq N$, $\S^0$ minimizes $\mathcal{F}$ among $\mathcal{A}_0^h$, and $\S$ is a blow-up cluster at an interface point $x$, then exactly one of the following is true:
\begin{enumerate}[label=(\roman*)]
\item $x\in \partial^* S_{\ell_0}^0 \cap \partial^* S_{\ell_1}^0$ is an interior interface point and $S_{\ell_0}=\{y:y\cdot \nu_{S_{\ell_0}^\delta}(x)<0\}$, $S_{\ell_1} = \mathbb{R}^2 \setminus S_{\ell_0}$;
\item $x$ is an interior interface point, and the non-empty chambers of $\S$ are three connected cones $S_{\ell_i}$, $i=0,1,2$, with vertex at the origin satisfying 
\begin{align}\label{classical angle conditions}
    \frac{\sin \theta_{\ell_0}}{c_{\ell_1}+c_{\ell_2}}=\frac{\sin \theta_{\ell_1}}{c_{\ell_0}+c_{\ell_2}}=\frac{\sin \theta_{\ell_2}}{c_{\ell_0}+c_{\ell_1}}
\end{align}
where $\theta_{\ell_i}=\mathcal{H}^1(S_{\ell_i} \cap \partial B)$;
\item $x\in \partial S_{\ell_0}^\delta \cap \partial B$ is not a jump point of $h$, and $S_{\ell_0} = \{y: y\cdot x <0 \}=S_0^c$;
\item $x\in \partial S_{\ell_0}^\delta \cap \partial B$ is a jump point of $h$, $S_{\ell_0}=\{y:y\cdot \nu <0,\, y \cdot x < 0\}$, and $S_{\ell_1}=\{y:y\cdot \nu >0,\, y \cdot x < 0\}$ for some $\nu \in \mathbb{S}^1$ and $\ell_1 \neq \ell_0$, and $S_0=\{y:y\cdot x >0\}$;
\item $x\in \partial S_{\ell_0}^\delta \cap \partial B$ is a jump point of $h$, and the non-empty chambers of $\S\in\mathcal{A}_x$ are $S_0=\{y:y\cdot x >0\}$ and three connected cones $S_{\ell_i}$, $i=0,1,2$, partitioning $S_0^c$. %In this case, denoting by $S_{\overline{\ell}}$ the chamber which does not border $\{y:y\cdot x =0 \}$, we also have the inequalities
%\begin{align}\label{angle lower bound}
%     \theta_{\overline{\ell}}\leq  \theta  \leq \theta_x\,,
%\end{align}
%where $\theta_{\overline{\ell}}$ is the angle defined by \eqref{classical angle conditions}, $\theta=\mathcal{H}^1(S_{\overline{\ell}}\cap \partial B)$, and $\theta_x$ is the angle at $x$ between the circular segments defined in \eqref{circular segments} meeting at $x$.
\end{enumerate}
\end{theorem}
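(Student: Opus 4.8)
The plan is to reduce the statement, via Theorem~\ref{existence of cones}, to the classification of planar \emph{minimal cluster cones}, in the interior and in a halfplane, and then to run the elementary competitor arguments already used for Theorem~\ref{positive delta classification theorem}, with the strict triangle inequalities \eqref{strict triangle ineq} doing the real work. First I would record what Theorem~\ref{existence of cones} gives. If $x$ is an interior interface point, the blow-up $\S$ is a cone (and $G=\emptyset$ since $\delta=0$) satisfying the interior minimality \eqref{interior cone is minimal}. If $x\in\partial B$ is \emph{not} a jump point of $h$, then \eqref{circular segment containment} of Theorem~\ref{existence theorem} forces $S_{\ell_0}^0\cap B\cap B_{r_x}(x)=B\cap B_{r_x}(x)$ for some $r_x$, so $\S$ is the halfspace $S_{\ell_0}=\{y\cdot x<0\}=S_0^c$; this is case~(iii). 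If $x\in\partial B$ is a jump point of $h$, say between $\ell_0$ and $\ell_1$, then $\S\in\mathcal{A}_x$ (Definition~\ref{boundary blow up definition}) satisfies the constrained minimality \eqref{boundary cone is minimal}, with $S_0=\{y\cdot x>0\}$ and $\{S_\ell\}_{\ell\ge1}$ partitioning $H:=\{y\cdot x<0\}$ subject to $C_\infty^{\ell_0}\subset S_{\ell_0}$, $C_\infty^{\ell_1}\subset S_{\ell_1}$. So it remains to classify (a) interior minimal cone-clusters and (b) constrained minimal cone-clusters in $H$. In both settings the cone is a locally finite union of closed sectors whose interface is a finite union of rays from the origin: each such ray contributes at least $r\min_{\ell\neq m}c_{\ell m}>0$ to $\mathcal{F}(\S;B_r)$, which is a fixed multiple of $r$ by the monotonicity formula \eqref{mon equation}, and the upper perimeter bounds of Lemma~\ref{upper perimeter bounds delta 0} pass to the blow-up.

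For the interior case, write $\mathcal{F}(\S;B_r)=r\sum_i c_{\ell_i m_i}$, the sum over the rays, with $\ell_i,m_i$ the two chambers adjacent to the $i$-th ray. Since $x\in B$ and $S_0=\mathbb{R}^2\setminus B$, the chamber $S_0$ is empty in the blow-up. If only two distinct chambers appear they partition $\mathbb{R}^2$, so the interface is a line through the origin; any extra pair of rays strictly raises the energy while remaining an admissible competitor agreeing with $\S$ outside $B_2$, so the only possibility is case~(i). If at least three distinct chambers appear, I would show that under the hypothesis $N=3$ or $c_\ell=1$ exactly three chambers meet, each as a single sector. For $c_\ell=1$ this is the classical classification of area-minimizing planar cluster cones (Plateau's laws, \cite{Morgan94}): only the line and the $120^\circ$ ``Y''. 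For $N=3$ the cyclic word of chambers around $\partial B_1$ uses only the letters $\{1,2,3\}$, and a short case check — in which every word other than the single ``$1\,2\,3$'' is ruled out by the triangle-cutting competitor (slice off a small triangle near the origin and reassign it to a neighboring chamber, as in step one of the proof of Theorem~\ref{positive delta classification theorem}), using \eqref{strict triangle ineq} — forces three distinct connected sectors. Finally, replacing the three rays near the origin by a small Steiner-type tree and optimizing its position shows that at a minimizer the three rays are in force equilibrium, $c_{\ell_1\ell_2}\tau_0+c_{\ell_0\ell_2}\tau_1+c_{\ell_0\ell_1}\tau_2=0$ with $\tau_i$ the outward unit tangent of the $i$-th ray; the triangle of forces, together with $\theta_{\ell_i}=\mathcal{H}^1(S_{\ell_i}\cap\partial B)=\angle(\tau_j,\tau_k)$, is exactly \eqref{classical angle conditions}. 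This is case~(ii), and the strict inequalities \eqref{strict triangle ineq} guarantee the sine rule is solvable with all three angles in $(0,\pi)$.

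For the boundary case at a jump point of $h$, $S_0=\{y\cdot x>0\}$ is fixed, and the constraints $C_\infty^{\ell_0}\subset S_{\ell_0}$, $C_\infty^{\ell_1}\subset S_{\ell_1}$ (whose members are wedges of positive density) force $S_{\ell_0}$ and $S_{\ell_1}$ nonempty. The configuration in $H$ is classified by the same triangle-cutting argument, now with the triangle having a vertex at the origin on $\partial H$; since all competitors are supported in $B_R\cap H$, membership in $\mathcal{A}_x$ is automatic. Either the free interface in $H$ is a single ray from the origin separating $S_{\ell_0}$ and $S_{\ell_1}$ — every such ray has the same cost $r\,c_{\ell_0\ell_1}$, so no angle condition arises, and we land in case~(iv) with $\nu$ the normal to that ray — or three chambers meet inside $H$, which is case~(v); under $N=3$ the halfplane contains at most the three chambers $S_1,S_2,S_3$, and under $c_\ell=1$ the Steiner-degree-three fact again excludes four or more chambers meeting at the origin. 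Mutual exclusivity of (i)--(v) is immediate, the five cases being distinguished by whether $x$ is interior or boundary, whether $x$ is a jump point of $h$, and the number of chambers present in $\S$.

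The main obstacle is precisely the exclusion of higher-multiplicity singular cones (four or more chambers meeting at a point): for general immiscible-fluids weights such cones can be minimizing, and this is the sole reason for the hypothesis $N=3$ or $c_\ell=1$. For equal weights this is the classical two-dimensional fact; for $N=3$ it is a finite combinatorial check over cyclic words in three letters (in the boundary case, three letters once $S_0$ is pinned down as a halfplane), powered by the strict triangle inequalities \eqref{strict triangle ineq} and the competitor constructions already in place for Theorem~\ref{positive delta classification theorem}. Everything else — the passage to a minimal cone, the sector structure, and the first-variation computation — is routine. I would also note that Theorem~\ref{main regularity theorem delta zero} then follows by combining this classification with Corollary~\ref{reduced boundary regularity delta 0} and the known fact that in $\mathbb{R}^2$ the interfaces are finitely many segments meeting at isolated points.
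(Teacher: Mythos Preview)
Your proposal is correct and follows essentially the same approach as the paper: reduce via Theorem~\ref{existence of cones} to classifying minimal cone-clusters (interior and in a halfplane), rule out four or more sectors by a triangle-cutting competitor (for $N=3$, using pigeonhole to find a repeated chamber) or the classical equal-weights argument (for $c_\ell=1$, via the construction in \cite[Proposition~30.9]{Mag12}), and then obtain \eqref{classical angle conditions} by first variation. One minor correction: in the $N=3$ case the competitor reassigns the cut triangle to the \emph{repeated} chamber (there is no $G$ to dump into when $\delta=0$, so the reference to step one of Theorem~\ref{positive delta classification theorem} is only for the geometric construction), and the inequality making it work is the Euclidean triangle inequality for the chord versus the two radii, not the weight inequality \eqref{strict triangle ineq}.
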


\begin{proof} We begin with the observation that no blow-up at $x$ can consist of a single chamber. To see this, since $x$ is an interface point, it belongs to $\partial S_{\ell}^0$ for some $\ell$. By our normalization \eqref{boundary convention 1}-\eqref{boundary convention 2} for reduced and topological boundaries, $x\in \spt \,\mathcal{H}^1\mres \partial^* S_{\ell}^0$. Therefore, due to the upper area bound \eqref{upper area bound delta zero}, no blow-up limit at $x$ can consist of a single chamber $S_{\ell'}$; if so, the $L^1$ convergence and the infiltration lemma would imply that $x\in \mathrm{int}\, S_{\ell'}^0$, contradicting $x\in \spt \,\mathcal{H}^1 \mres \partial^* S_{\ell}^0$. Therefore, there are at least two chambers in the blow-up cluster at $x$.
\par
Next, we claim that when $N=3$ or $c_\ell=1$ for all $\ell$, there cannot be four or more non-empty connected components of chambers of $\S$ comprising $\mathbb{R}^2$ if the blow-up is at an interior interface point or comprising $\{y:y\cdot x<0\}$ at a boundary interface point. If $N=3$ and this were the case, then there must be some $S_\ell$, say $S_1$, which has two connected components $C_1$ and $C_2$ separated by a circular sector $C_3$ with $\partial C_3 \cap \partial B =\{c,c'\}$ and $\dist(c,c')<2$. We set $K$ to be the convex hull of $0$, $c$, and $c'$ and define the cluster $\S'=(S_0, S_1 \cup K, S_2\setminus K, S_3\setminus K,\emptyset)$. Note $\S'\in \mathcal{A}_x$ when $x$ is a boundary interface point. Then the triangle inequality implies that $\mathcal{F}(\S';B_2)<\mathcal{F}(\S;B_2)$, which contradicts the minimality condition \eqref{interior cone is minimal} or \eqref{boundary cone is minimal}. For the case when $c_\ell=1$ for all $\ell$, if there was more than three connected components, there must be some component $C\subset S_\ell$ with $\mathcal{H}^1(C \cap B_1)<2\pi/3$, and when $x$ is a boundary interface point, $\partial C \cap \{y\cdot x=0\}=\{0\}$. Then the construction in \cite[Proposition 30.9]{Mag12}, in which triangular portions of $C$ near $0$ are allotted to the neighboring chambers allows us to construct a competitor (belonging to $\mathcal{A}_x$ if required) that contradicts the minimality \eqref{interior cone is minimal} or \eqref{boundary cone is minimal}.
\par
We may now conclude the proof. If $x$ is an interior interface point, then there are either two or three distinct connected chambers in the blow-up at $x$. Similar to the previous theorem, the triangle inequality implies that if there are two, they are both halfspaces. If there are three, the angle conditions \eqref{classical angle conditions} follow from a first variation argument. If $x$ is a boundary interface point, then $(iii)$ holds by \eqref{circular segment containment} if $x$ is not a jump point of $h$. If $x$ is a jump point of $h$, then $\{y\cdot x < 0\}$ is partitioned into either two or three connected cones. The former is case $(iv)$, and the latter is case $(v)$. %A first variation argument yields the angle conditions \eqref{angle lower bound}. 
\end{proof}

\section{Proof of Theorem \ref{main regularity theorem delta positive}}\label{sec: proofs of main regularity theorems}

To streamline the statement below, the terminology ``arc of constant curvature" includes segments in addition to circle arcs. 

\begin{theorem}[Interior Resolution for $\delta>0$]\label{interior resolution theorem delta positive}
    If $\S^\delta$ minimizes $\mathcal{F}$ among $\mathcal{A}_\delta^h$ or $\mathcal{A}_\delta^{\mm}$ for some $\delta>0$ and $x\in \partial S_{\ell_0}^\delta$ is an interior interface point, then there exists $r_x>0$ such that exactly one of the following is true:
\begin{enumerate}[label=(\roman*)]
\item $S_{\ell'}^\delta \cap B_{r_x}(x)=\emptyset$ for $\ell' \neq \ell_0$ and $\partial S_{\ell_0}^\delta \cap B_{r_x}(x)$ is an arc of constant curvature separating $S_{\ell_0}^\delta \cap B_{r_x}(x)$ and $G^\delta \cap B_{r_x}(x)$;
\item $\partial S_{\ell_0}^\delta \cap B_{r_x}(x)$ is an arc of constant curvature separating $B_{r_x}(x)$ into $S_{\ell_0}^\delta \cap B_{r_x}(x)$ and $S_{\ell'}^\delta \cap B_{r_x}(x)$ for some $\ell'\neq \ell_0$;
\item there exist circle arcs $a_1$ and $a_2$ meeting tangentially at $x$ such that 
\begin{align}\notag
    \partial S_{\ell_0}^\delta\cap \partial G^\delta \cap B_{r_x}(x)= a_1\,,\quad \partial S_{\ell'}^\delta\cap \partial G^\delta \cap B_{r_x}(x)= a_2\,, \quad \partial S_{\ell_0}^\delta \cap \partial S_{\ell'}^\delta \cap B_{r_x}(x)=\{x\}\,;
\end{align}
%for some $\ell'\neq \ell_0$, $\partial S_{\ell_0}^\delta \cap B_{r_x}(x)$ and $\partial S_{\ell'}^\delta \cap B_{r_x}(x)$ both consist of circle arcs meeting tangentially at $x$;
\item there exists circle arcs $a_1$ and $a_2$ meeting in a cusp at $x$ and an arc $a_3$ of constant curvature reaching the cusp tangentially at $x$, and
\begin{align}\notag
    \partial S_{\ell_0}^\delta\cap \partial G^\delta \cap B_{r_x}(x)= a_1\,,\quad \partial S_{\ell'}^\delta\cap \partial G^\delta \cap B_{r_x}(x)= a_2\,, \quad \partial S_{\ell_0}^\delta \cap \partial S_{\ell'}^\delta \cap B_{r_x}(x)=a_3\,.
\end{align}
\end{enumerate}
\end{theorem}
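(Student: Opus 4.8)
The plan is to leverage the classification of blow-up cones in Theorem \ref{positive delta classification theorem} and then upgrade each possible blow-up to a genuine local resolution. Since $x$ is an interior interface point, only cases $(i)$ and $(ii)$ of Theorem \ref{positive delta classification theorem} can occur at $x$; I would organize the proof around these two possibilities, keeping in mind that a priori the blow-up need not be unique (the same point may admit a case $(i)$ blow-up along one sequence and a case $(ii)$ blow-up along another).

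\emph{Case A: some blow-up at $x$ is of type $(i)$ (halfspace for $S_{\ell_0}$, complement for $G$).} Here $\limsup_{r\to 0}|G^\delta \cap B_r(x)|/\pi r^2 > 0$, so Corollary \ref{no termination corollary} applies directly and gives conclusion $(i)$ of the theorem: for some $r_x>0$, $\partial G^\delta \cap B_{r_x}(x)$ is an arc of constant curvature dividing $B_{r_x}(x)$ into $G^\delta\cap B_{r_x}(x)$ and $S_{\ell_0}^\delta\cap B_{r_x}(x)$, and the infiltration lemma rules out the other chambers in that ball.

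\emph{Case B: every blow-up at $x$ is of type $(ii)$ (two halfspaces from $S_{\ell_0}$ and some $S_{\ell'}$).} First, fixing one blow-up sequence $r_j\to 0$, the $L^1_{\rm loc}$ convergence together with the density estimates \eqref{area density equation} identify a single index $\ell'\neq \ell_0$ such that near $x$ only $S_{\ell_0}^\delta$, $S_{\ell'}^\delta$ and $G^\delta$ are present; I would record this by an infiltration argument showing $S_{\ell''}^\delta\cap B_{r_x}(x)=\emptyset$ for $\ell''\notin\{\ell_0,\ell'\}$. The crux is then to resolve the interface between $S_{\ell_0}^\delta$, $S_{\ell'}^\delta$, and $G^\delta$. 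Following the idea-of-proof section: using convergence to the pair of halfspaces along $r_j\to 0$ and the (strictly positive, scale-invariant) density lower bounds on $S_{\ell_0}^\delta$ and $S_{\ell'}^\delta$, for $j$ large one can choose, after rotating so that the common blow-up line is horizontal, a thin rectangle $Q=[t_1,t_2]\times[-\eta,\eta]$ (rescaled to fit the hypotheses of Lemma \ref{symmetrization lemma}) with $Q\subset (S_{\ell_0}^\delta)^{(1)}\cup (S_{\ell'}^\delta)^{(1)}\cup (G^\delta)^{(1)}$, with $S_{\ell_0}^\delta$ occupying the bottom part and $S_{\ell'}^\delta$ the top, and with the left and right traces of $S_{\ell_0}^\delta$ on $\partial Q$ being single subintervals $[-\eta,a_i]$ as in \eqref{trace assumption}. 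I would then apply Lemma \ref{symmetrization lemma} to $E=S_{\ell_0}^\delta\cap Q$ to produce the hypograph competitor $E^h$; gluing $E^h$ back in and defining the competitor cluster by giving the slice complements to $S_{\ell'}^\delta$ (the region above) appropriately, with $G^\delta$ on the symmetric difference, one obtains a competitor with the same $G^\delta$-area up to the area-fixing correction controlled by Lemma \ref{lambda r nought lemma}. Minimality of $\S^\delta$ and the equality case of Lemma \ref{symmetrization lemma} then force every vertical slice $(S_{\ell_0}^\delta)^{(1)}_t$ to be an interval, and symmetrically for $S_{\ell'}^\delta$; hence inside $Q$ both $\partial S_{\ell_0}^\delta$ and $\partial S_{\ell'}^\delta$ are graphs $g_{\ell_0}\leq g_{\ell'}$ over the horizontal axis, with $G^\delta$ the region strictly between them.

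Having reduced to two ordered graphs $g_{\ell_0}\le g_{\ell'}$, I would finish by a regularity/ODE analysis of each graph. On the open set where $g_{\ell_0}<g_{\ell'}$, the boundary $\partial S_{\ell_0}^\delta$ borders only $G^\delta$ and is (by Corollary \ref{reduced boundary regularity delta positive} / the volume-constrained perimeter theory via Lemma \ref{lambda r nought lemma}) an arc of constant curvature $\kappa_{\ell_0}^\delta$; likewise $g_{\ell'}$ has constant curvature $\kappa_{\ell'}^\delta$, and the constancy of these curvatures across all such components comes from the Lagrange-multiplier structure of \eqref{lambda r nought inequality} (respectively \eqref{lambda r nought inequality volume version}). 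On the set where $g_{\ell_0}=g_{\ell'}$, the shared interface $\partial S_{\ell_0}^\delta\cap\partial S_{\ell'}^\delta$ borders no $G^\delta$, so it is a constant-curvature arc $\kappa_{\ell_0\ell'}^\delta$ by the classical two-phase theory. Analyzing the contact set $\{g_{\ell_0}=g_{\ell'}\}$: if it is all of $(t_1,t_2)$ we are in case $(ii)$ of the theorem; if it is empty we are in case $(iii)$ or $(iv)$ according to whether the common point $x$ is isolated or sits at the end of a shared arc; the general case is handled by noting the contact set is relatively closed and, by the strong maximum principle / Hopf-type comparison for the two constant-curvature graphs, its relative boundary consists of isolated tangential contact points — which gives $C^{1,1}$ regularity of $\partial S_{\ell_0}^\delta$ and $\partial S_{\ell'}^\delta$ at $x$ and matches the endpoint descriptions in $(iii)$, $(iv)$. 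Shrinking $r_x$ so that $B_{r_x}(x)\subset Q$ (after undoing the rescaling) completes the proof.

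\emph{Main obstacle.} The hard part is the rectangle construction in Case B: turning the weak $L^1_{\rm loc}$/measure convergence along $r_j\to 0$ into the \emph{pointwise} containment $Q\subset (S_{\ell_0}^\delta)^{(1)}\cup(S_{\ell'}^\delta)^{(1)}\cup(G^\delta)^{(1)}$ together with the precise single-interval trace behavior \eqref{trace assumption} on the two vertical sides of $Q$. This requires combining the halfspace convergence with the infiltration lemma (to evict the other chambers) and with a Fubini/coarea selection of good horizontal levels $t_1,t_2$ at which the one-dimensional slices are exactly as needed — a delicate but routine measure-theoretic argument. Once the rectangle with graphical boundary data is in hand, the symmetrization lemma and its equality case do the rest cleanly.
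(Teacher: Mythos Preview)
Your overall plan matches the paper's proof closely: split on whether some blow-up is of type $(i)$ (which is equivalent to the paper's Step zero, since by the density estimates $x\notin\partial S_{\ell'}^\delta$ for $\ell'\neq\ell_0$ iff every blow-up misses $S_{\ell'}$), then in the remaining case locate a small rectangle with single-interval traces, apply the symmetrization Lemma~\ref{symmetrization lemma} to force graphicality, and finish by analyzing the contact set. The rectangle selection and the contact-set analysis are essentially as in the paper (the paper argues that $\mathcal I$ is an interval by showing two constant-curvature arcs tangent at \emph{two} points must coincide, using Corollary~\ref{no termination corollary} to get tangency at the endpoints of any gap; your maximum-principle phrasing is a valid variant).

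There is, however, a genuine gap in your symmetrization competitor. You propose to replace only $S_{\ell_0}^\delta\cap Q$ by its hypograph $E^h$, hand ``the slice complements'' to $S_{\ell'}^\delta$, and absorb the resulting area discrepancy via the almost-minimality inequality of Lemma~\ref{lambda r nought lemma}. This does not work: the error term $\Lambda\,\bigl||G^\delta|-|G'|\bigr|$ (or the analogous sum in \eqref{lambda r nought inequality volume version}) prevents you from concluding \emph{equality} in \eqref{symmetrization inequality}, and it is precisely the equality case that forces every vertical slice to be an interval. Moreover, if $S_{\ell'}^\delta$ is left unsymmetrized it may overlap $E^h$, so the competitor is not even a partition. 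The paper's fix is to symmetrize \emph{both} chambers at once: set $A=(S_{\ell_0}^\delta/r_j)^h$ and $B=-(-S_{\ell'}^\delta/r_j)^h$. Since vertical slices of $S_{\ell_0}^\delta$, $S_{\ell'}^\delta$, $G^\delta$ partition $[-1,1]$, one has $A\cap B=\emptyset$; Lemma~\ref{symmetrization lemma} gives that $(A,B)$ matches the traces and areas of $(S_{\ell_0}^\delta,S_{\ell'}^\delta)$ on $Q'$ exactly, so the glued cluster lies in $\mathcal A_\delta^h$ (resp.\ $\mathcal A_\delta^{\mm}$) with no area correction. Exact minimality then yields
\[
c_{\ell_0}P(S_{\ell_0}^\delta;\mathrm{int}\,Q')+c_{\ell'}P(S_{\ell'}^\delta;\mathrm{int}\,Q')\le c_{\ell_0}P(A;\mathrm{int}\,Q')+c_{\ell'}P(B;\mathrm{int}\,Q'),
\]
while \eqref{symmetrization inequality} applied to each chamber gives the reverse inequality termwise. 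Hence equality holds in both instances of \eqref{symmetrization inequality}, and the equality case delivers graphicality for $S_{\ell_0}^\delta$ and $S_{\ell'}^\delta$ simultaneously. Once you make this correction, the rest of your outline goes through.
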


\begin{proof} Let us assume for simplicity that $x$ is the origin; the proof at any other point is similar.
\par
\medskip
\noindent\textit{Step zero}: If $0\notin \partial S_{\ell'}^\delta$ for all $\ell'\neq \ell_0$, then by the density estimates \eqref{area density equation}, $B_{r_0}\cap S_{\ell'}^\delta =\emptyset$ for some $r_0$ and all $\ell'\neq \ell_0$. From the classification of blowups in Theorem \ref{positive delta classification theorem}, $(i)$ must hold at $0$.
\par
\medskip
\noindent\textit{Step one}: For the rest of the proof, we assume instead that for some $\ell'\neq \ell_0$, $0\in \partial S_{\ell'}^\delta$. By Theorem \ref{positive delta classification theorem} and the fact that the density estimates \eqref{area density equation} pass to all blow-up limits, we are in case $(ii)$ of that theorem: any possible blow-up limit at $0$ is a pair of halfspaces coming from $S_{\ell_0}^\delta$ and $S_{\ell'}^\delta$. In this step we identify a rectangle $Q'$ small enough such that $S_{\ell_0}^\delta \cap Q'$ and $S_{\ell'}^\delta \cap Q'$ are a hypograph and epigraph, respectively, over a common axis. 
\par
Let us fix $r_j\to 0$ such that applying Theorem \ref{positive delta classification theorem} and rotating if necessary, we obtain
\begin{align}\label{L1 convergence blowup}
    S_{\ell_0}^\delta/r_j \overset{L^1_\loc}{\to} \mathbb{H}^-:= \{y: y\cdot e_2 < 0 \}\,,&\quad S_{\ell'}^\delta/r_j \overset{L^1_\loc}{\to} \mathbb{H}^+:= \{y: y\cdot e_2 > 0 \}\,, \\ \label{blow-up weak star}
    \mathcal{H}^1 \mres \partial S_{\ell_0}/r_j\,, \,\mathcal{H}^1 \mres \partial &S_{\ell'}/r_j \weakstar\mathcal{H}^1 \mres \partial \mathbb{H}^+\,.
\end{align}
Set
\begin{align}\notag
    Q = [-1,1]\times [-1,1]\,.
\end{align}
We note that for all $r<r_{**}/r_j$,
\begin{align}\label{area density equation blowup}
    \alpha_1 \pi r^2 \leq |S_\ell^\delta/r_j \cap B_r(y)|\leq (1-\alpha_1)\pi r^2 \quad \textup{if }y\in \partial S_\ell^\delta \textup{ for }\ell = \ell_0 \textup{ or } \ell' 
\end{align}
due to \eqref{area density equation}. Also due to \eqref{area density equation} and \eqref{L1 convergence blowup},
\begin{align}\label{no third chamber}
    S_{\ell}^\delta \cap B_{r_j}= \emptyset\quad \forall \ell \notin \{\ell',\ell_0\}\,, \quad\textup{for large }j;
\end{align}
we may assume by restricting to the tail that \eqref{no third chamber} holds for all $j$. Next, a standard argument utilizing \eqref{L1 convergence blowup} and \eqref{area density equation blowup} implies that there exists $J\in \mathbb{N}$ such that for all $j\geq J$,  
\begin{align}\label{boundary trapped}
    (\partial S_{\ell_0}^\delta/r_j \cup \partial S_{\ell'}^\delta/ r_j) \cap Q \subset [-1,1]\times [-1/4,1/4]\,.
\end{align}
Now for almost every $t\in [-1,1]$, by Lemma \ref{slicing lemma}, the vertical slices (viewed as subsets of $\mathbb{R}$)
$$
(S_{\ell_0}^\delta/r_j)_t=S_{\ell_0}^\delta/r_j \cap Q \cap \{y:y\cdot e_1=t \}\,,\quad (S_{\ell'}^\delta/r_j)_t:=(S_{\ell'}^\delta)_t\cap Q \cap \{y:y\cdot e_1=t \}
$$ are one-dimensional sets of finite perimeter and, by \eqref{boundary trapped} and \cite[Proposition 14.5]{Mag12},
\begin{align}\notag
    2c_{\ell_0} + 2c_{\ell'}&\leq \int_{-1}^1 c_{\ell_0}P((S_{\ell_0}^\delta/r_j)_t;(-1,1))+c_{\ell'}P((S_{\ell'}^\delta/r_j)_t;(-1,1)) \,dt\\ \label{slicing perimeter estimate}
    &\leq c_{\ell_0}P(S_{\ell_0}^\delta/r_j;\mathrm{int}\,Q) + c_{\ell'}P(S_{\ell'}^\delta/r_j;\mathrm{int}\,Q)\,.
\end{align}
Since $\mathcal{H}^1(\partial \mathbb{H}^+ \cap \partial Q)=0$, \eqref{blow-up weak star} implies that
\begin{align}\label{weak star estimate}
    \lim_{j\to \infty }c_{\ell_0}P(S_{\ell_0}^\delta/r_j;\mathrm{int}\,Q) + c_{\ell'}P(S_{\ell'}^\delta/r_j;\mathrm{int}\,Q) = 2c_{\ell_0} + 2c_{\ell'}\,.
\end{align}
Together, \eqref{boundary trapped}-\eqref{weak star estimate} and Lemma \ref{slicing lemma} allow us to identify $j$ as large as we like (to be specified further shortly) and $1<t_1< t_2< 1 $ such that for $i=1,2$,
\begin{align}\label{one perimeter}
P((S_{\ell_0}^\delta/r_j)_{t_i};(-1,1))&=1=P((S_{\ell'}^\delta/r_j)_{t_i} ;(-1,1))\,,  \\ \notag
%P(S_{\ell_0}^\delta/r_j;\{y\cdot e_1=t_i\}\cap Q)&=0=P(S_{\ell'}^\delta/r_j;\{y\cdot e_1=t_i\}\cap Q)
0&=\int_{(-1,1)}|\mathbf{1}_{(S_{\ell_0}^\delta/r_j)^+_{t_i}}-\mathbf{1}_{(S_{\ell_0}^\delta/r_j)^-_{t_i}}|+|\mathbf{1}_{(S_{\ell_0}^\delta/r_j)^+_{t_i}}- \mathbf{1}_{(S_{\ell_0}^\delta/r_j)_{t_i}}|\,d\mathcal{H}^1\\ \label{traces agree}
&=\int_{(-1,1)}|\mathbf{1}_{(S_{\ell'}^\delta/r_j)^+_{t_i}}-\mathbf{1}_{(S_{\ell'}^\delta/r_j)^-_{t_i}}|+|\mathbf{1}_{(S_{\ell'}^\delta/r_j)^+_{t_i}}- \mathbf{1}_{(S_{\ell'}^\delta/r_j)_{t_i}}|\,d\mathcal{H}^1\,,
\end{align}
where here and in the rest of the argument, the minus and plus superscripts denote left and right traces along $\{y\cdot e_1=t_i\}$ (again viewed as subsets of $\mathbb{R}$). From \eqref{boundary trapped} and \eqref{one perimeter}-\eqref{traces agree}, we deduce that there exist $-1/4\leq a_1 \leq b_1 \leq 1/4$ and $-1/4\leq a_2 \leq b_2 \leq 1/4$ such that
 \begin{align}\label{left and right traces}
    \mathcal{H}^1((S_{\ell_0}^\delta/r_j)^\pm_{t_i} \Delta [-1, a_i])=&0= \mathcal{H}^1((S_{\ell'}^\delta/r_j)^\pm_{t_i} \Delta [b_i, 1]) \quad\textup{ for }i=1,2\,.%\\ \label{right trace}
    %\mathcal{H}^1((S_{\ell_0}^\delta/r_j)_t \Delta [-1, a_2])=&0= \mathcal{H}^1((S_{\ell'}^\delta/r_j)_t \Delta [b_2, 1])\,.
\end{align}
Let us call $Q' = [t_1,t_2]\times [-1,1]$.
%\begin{align}\label{correct boundary data 1}
%    P((S_j^0)_t;(-1,1))=1=P((S_j')_t ;(-1,1))\,,
%\end{align}
Since it will be useful later, we record the equality
\begin{align}\label{no perimeter on boundary cube}
  \mathcal{F}(\S^\delta) =   \mathcal{F}(S^\delta;\mathbb{R}^2\setminus r_jQ') + c_{\ell_0}P(S_{\ell_0}^\delta; \mathrm{int}\, r_jQ') + c_{\ell'}P(S_{\ell'}^\delta; \mathrm{int}\, r_jQ')\,,
\end{align}
which follows from \eqref{no third chamber}, \eqref{traces agree}, and Lemma \ref{slicing lemma}.
\par
Using the explicit description given by \eqref{boundary trapped} and \eqref{left and right traces}, we now identify a variational problem on $Q'$ for which our minimal partition must be optimal. We consider the minimization problem
\begin{align}\notag
    \inf_{\mathcal{A}_{Q'}} c_{\ell_0}P(A;\mathrm{int}\, Q') + c_{\ell'}P(B;\mathrm{int}\, Q')\,,
\end{align}
where
\begin{align}\notag
    \mathcal{A}_{Q'}&:=\{(A,B):A, B\subset Q',\, \restr{A}{\partial Q'}=S_{\ell_0}^\delta/r_j,\, \restr{B}{\partial Q'}=S^\delta_{\ell'}/r_j\textup{ in the trace sense},\, \\ \notag
    &\quad\qquad |A\cap B|=0,\,|A\cap Q'|=|(S_{\ell_0}^\delta/r_j)\cap Q'|,\,|B\cap Q'|=|(S_{\ell'}^\delta/r_j)\cap Q'|\}\,.
\end{align}
By the area constraint on elements in the class $\mathcal{A}_{Q'}$ and $\S^\delta \in \mathcal{A}_\delta^h$ or $\S^\delta \in \mathcal{A}_\delta^{\mm}$, any $\S$ given by
\begin{align}\notag
    S_{\ell_0} = (S_{\ell_0}\setminus r_jQ')\cup r_j(A \cap  Q')\,, \quad S_{\ell'}= (S_{\ell'}\setminus r_jQ')\cup r_j(B \cap  Q')\,, \quad  
    S_{\ell}= S_{\ell}^\delta\quad \ell\notin\{\ell_0,\ell'\}\,,
\end{align}
satisfies $|\mathbb{R}^2 \setminus \cup_\ell S_\ell| \leq \delta$ in the former case and $|\mathbb{R}^2 \setminus \cup_\ell S_\ell| \leq \delta$ and $(|S_1|,\dots,|S_N|)=\mm$ in the latter. Also, once $r_j$ is small enough, if $\S^\delta\in \mathcal{A}_\delta^h$, then $\S$ satisfies the trace condition \eqref{trace constraint} also. Therefore, $\S\in \mathcal{A}_\delta^h$ or $\S\in \mathcal{A}_\delta^{\mm}$, so we can compare
\begin{align}\notag
\mathcal{F}(\S^\delta)\hspace{.15cm} &\hspace{-.25cm}\overset{\eqref{no perimeter on boundary cube}}{=}   \mathcal{F}(\S^\delta;\mathbb{R}^2\setminus r_jQ') + c_{\ell_0}P(S_{\ell_0}^\delta; \mathrm{int}\, r_jQ') + c_{\ell'}P(S_{\ell'}^\delta; \mathrm{int}\, r_jQ') \\ \notag
&\leq \mathcal{F}(\S)= \mathcal{F}(\S^\delta;\mathbb{R}^2\setminus r_jQ') +  r_j c_{\ell_0}P(A; \mathrm{int}\, Q') + r_j c_{\ell'}P(B; \mathrm{int}\, Q')\,,
\end{align}
where in the last equality we have used the trace condition on $\mathcal{A}_{Q'}$ and the formula \eqref{trace difference} for computing $\mathcal{F}(\cdot;\partial Q')$. Discarding identical terms and rescaling, this inequality yields
\begin{align}\label{minimality on Q'}
    c_{\ell_0}P(S_{\ell_0}^\delta/r_j; \mathrm{int}\, Q') + c_{\ell'}P(S_{\ell'}^\delta/r_j; \mathrm{int}\, Q') \leq  c_{\ell_0}P(A; \mathrm{int}\, Q') +  c_{\ell'}P(B; \mathrm{int}\, Q')\,,
\end{align}
where $(A,B)\in \mathcal{A}_{Q'}$ is arbitrary. Simply put, our minimal partition must be minimal on $r_j Q'$ among competitors with the same traces and equal areas of all chambers.
\par
We now test \eqref{minimality on Q'} with a well-chosen competitor based on symmetrization. Let
\begin{align}\notag
    A=\{(x_1,x_2):-1\leq x_2 \leq \mathcal{H}^1((S_{\ell_0}^\delta/r_j)_{x_1})-1\} \,,\quad B= \{(x_1,x_2): 1\geq x_2 \geq 1-\mathcal{H}^1((S_{\ell'}^\delta)_{x_1})\}\,.
\end{align}
In the notation set forth in Lemma \ref{symmetrization lemma},
\begin{align}\notag
    A= (S_{\ell_0}^\delta/r_j)^h\,,\quad B= -(-S_{\ell'}^\delta/r_j)^h\,.
\end{align}
By \eqref{left and right traces} and \eqref{boundary trapped}, the assumptions of Lemma \ref{symmetrization lemma} are satisfied by $S_{\ell_0}^\delta/r_j$ and $ - S_{\ell'}^\delta/r_j$. Then the conclusions of that lemma imply that $(A,B) \in \mathcal{A}_{Q'}$, so \eqref{minimality on Q'} holds. %, in which case \eqref{minimality on Q'} reads
%\begin{align}\label{modified minimality on Q'}
%    c_{\ell_0}P(S_{\ell_0}^\delta/r_j; \mathrm{int}\, Q') + c_{\ell'}P(-S_{\ell'}^\delta/r_j; \mathrm{int}\, Q') \leq  c_{\ell_0}P(A; \mathrm{int}\, Q') +  c_{\ell'}P(-B; \mathrm{int}\, Q')\,,
%\end{align}
However, \eqref{symmetrization inequality} also gives
\begin{align}\label{reverse minimality on Q'}
    c_{\ell_0}P(S_{\ell_0}^\delta/r_j; \mathrm{int}\, Q') + c_{\ell'}P(-S_{\ell'}^\delta/r_j; \mathrm{int}\, Q') \geq  c_{\ell_0}P(A; \mathrm{int}\, Q') +  c_{\ell'}P(-B; \mathrm{int}\, Q')\,,
\end{align}
so that in fact there is equality. But according to Lemma \ref{symmetrization lemma}, every vertical slice of $(S_{\ell_0}^\delta/r_j)\cap Q'$ and $(-S_{\ell'}^\delta/r_j)\cap Q'$ must therefore be an interval with one endpoint at $-1$. This is precisely what we set out to prove in this step.
\par
\medskip
\noindent\textit{Step two}: Here we prove that for the open set $G^{\delta}$ (see Remark \ref{lebesgue representative remark}), the set
\begin{align}\notag
    \mathcal{I}:=\{t\in [r_jt_1/2,r_jt_2/2]: (G^\delta \cap r_jQ')_t = \emptyset \}
\end{align}
is a closed interval. $\mathcal{I}$ is closed since the projection of the open set $G^\delta \cap r_j Q'$ onto the $x_1$ axis is open, so we only need to prove it is an interval. First, we claim that for any rectangle $R'=(T_1,T_2)\times [-r_j,r_j]$ with $(T_1,T_2)\subset \mathcal{I}^c$,
\begin{align}\label{graphicality claim}
\mbox{$\partial S_{\ell_0}^\delta \cap  R$ and $\partial S_{\ell'}^\delta \cap  R$ are graphs of functions $F_0$ and $F'$}
\end{align}
with $F_0<F'$, over the $x_1$-axis of constant curvature with no vertical tangent lines in $R'$. To see this, first note that for any $(a,b)\subset\!\subset (T_1,T_2)$, $ \partial S_{\ell_0}^\delta \cap ((a,b) \times [-r_j,r_j])$ and $\partial S_{\ell'}^\delta \cap ((a,b) \times [-r_j,r_j])$ must be at positive distance from each other by the definition of $\mathcal{I}^c$. Then a first variation argument implies that each has constant mean curvature in the distributional sense, and a graph over $(a,b)$ with constant distributional mean curvature must be a single arc of constant curvature with no vertical tangent lines in the interior. Letting $(a,b)$ exhaust $(T_1,T_2)$, we have proven the claim.
\par
Suppose for contradiction that there exist $T_i\in \mathcal{I}$, $i=1,2$, such that $(T_1,T_2)\cap \mathcal{I}^c\neq \emptyset$. Set $(T_1,T_2)\times [-r_j,r_j]= R$. Now $F_0$ and $F_1$ extend continuously to $T_1$ and $T_2$ with $F_0(T_i) \leq F'(T_i)$ for each $i$. In fact $F_0(T_i)=F'(T_i)$. If instead we had for example $F_0(T_1)<F'(T_1)$, then $G^\delta$ would contain a rectangle $(t,T_1)\times (c,d)$ for some $t<T_1$ and $c<d$, which would imply that $G^\delta$ has positive density at $(T_1, F_0(T_1))$ and $(T_1,F'(T_1))$. By Corollary \ref{no termination corollary}, $\partial G^\delta \cap \partial S_{\ell_0}^\delta$ is single arc of constant curvature in neighborhood $N$ of $(T_1,F_0(T_1))$, which, by $T_1\in \mathcal{I}$, has vertical tangent line at $(T_1,F_0(T_1))$. Therefore, $ \partial S_{\ell_0}^\delta \cap N \cap R$ is either a vertical segment or a circle arc with vertical tangent line at $(T_1,F_0(T_1))$, and both of these scenarios contradict \eqref{graphicality claim}. So we have $F_0(T_i)=F'(T_i)$, and thus $(T_i,F_0(T_i))\in \partial S_{\ell_0}^\delta \cap \partial S_{\ell'}^\delta \cap \partial G^\delta$. As a consequence, by Corollary \ref{no termination corollary}, $G^\delta$ must have density $0$ at $(T_i,F_0(T_i))$, which means that the graphs of $F_0$ and $F'$ meet tangentially at $T_i$. But the only way for two circle arcs to meet tangentially at two common points is if they are the same arc, that is $F_0=F'$, which is a contradiction of $F_0<F'$. We have thus shown that $\mathcal{I}$ is a closed interval.
%To see this, first note that for any $t\in (T_1,T_2)$, there exists some $(t,s_0)\in \partial G^\delta \cap \partial S_{\ell_0}^\delta$ and $(t,s')\in \partial G^\delta \cap \partial S_{\ell'}^\delta$ at positive distance from $S_{\ell'}^\delta$ and $S_{\ell_0}^\delta$, respectively. By Theorem \ref{positive delta classification theorem}, the only possible blow-up at either point is therefore one where $G^\delta$ blows up to a half space. In turn, by Corollary \ref{reduced boundary regularity delta positive}, $\partial S_{\ell_0} \cap \partial G^\delta$ and $\partial S_{\ell'}^\delta \cap G^\delta$ are arcs of constant curvature in a neighborhood of each point. By the graphicality deduced in step one, this arc cannot be a circle with vertical tangent line, and it also cannot be a vertical segment since that segment .
\par
\medskip
\noindent\textit{Step three}: Finally we may finish the proof. We note that by our assumption $0 \in \partial S_{\ell'}^\delta \cap \partial S_{\ell_0}^\delta$ (see the beginning of step one), $0\in I$. Now if $0\in \mathrm{int}\, I$, then $|G^\delta \cap B_{r}(0)|=0$ for some small $r$, and we have $(ii)$. If $\{0\}=I$, then by the same argument as at the beginning of the previous step, we know that $\partial S_{\ell_0}^\delta/r_j \cap (Q' \setminus \{0\})$ and $\partial S_{\ell'}^\delta/r_j \cap (Q'\setminus \{0\})$ are each two circle arcs of equal curvature meeting at the origin. Furthermore, since the blow-up of $G^\delta$ is empty at $0$, we see that all four of these arcs must meet tangentially at the origin, so that $(iii)$ holds. Lastly, if $\mathrm{int}\,I\neq \emptyset$ and $0\in \partial I$, the combined arguments of the previous two cases imply that $(iv)$ holds.\end{proof}

\begin{theorem}[Boundary Resolution for $\delta>0$]\label{boundary resolution theorem for positive delta}
    If $\S^\delta$ minimizes $\mathcal{F}$ among $\mathcal{A}_\delta^h$ for some $\delta>0$ and $x\in \partial S_{\ell_0}^\delta \cap \partial B$, then there exists $r_x>0$ such that exactly one of the following is true:
\begin{enumerate}[label=(\roman*)]
\item $x$ is not a jump point of $h$ and $B_{r_x}(x) \cap B = B_{r_x}(x) \cap  S_{\ell_0}^\delta$;
\item $x$ is a jump point of $h$ and $\partial S_{\ell_0}^\delta \cap B_{r_x}(x)$ is a line segment separating $B_{r_x}(x) \cap B$ into $S_{\ell_0}^\delta \cap B_{r_x}(x) \cap B$ and $S_{\ell'}^\delta \cap B_{r_x}(x) \cap B$ for some $\ell'\neq \ell_0$;
\item $x$ is a jump point of $h$, and there exists circle arcs $a_1$ and $a_2$ meeting at $x$ such that
\begin{align}\notag
    \partial S_{\ell_0}^\delta\cap \partial G^\delta \cap B_{r_x}(x)= a_1\,,\quad \partial S_{\ell'}^\delta\cap \partial G^\delta \cap B_{r_x}(x)= a_2\,, \quad \partial S_{\ell_0}^\delta \cap \partial S_{\ell'}^\delta \cap B_{r_x}(x)=\{x\}\,.
\end{align}
%for some $\ell'\neq \ell_0$, $\partial S_{\ell_0}^\delta \cap B_{r_x}(x)$ and $\partial S_{\ell'}^\delta \cap B_{r_x}(x)$ both consist of circle arcs meeting tangentially at $x$;
%\item there exists balls $B_{r_0}(x_0)$ and $B_{r'}(x')$ with boundaries meeting tangentially at $x$ and $90^\circ$ sectors $\mathcal{S}_0$, $\mathcal{S}'$ of $B_{r_x}$ with $\partial \mathcal{S}_0 \cap \partial \mathcal{S}'$ tangential to $B_{r_0}(x_0)$ at $x$ such that for some $\ell'\neq \ell_0$,
%\begin{align} \notag
%S_{\ell_0}^\delta\cap B_{r_x}(x)=(B_{r_0}(x_0)\cap B_{r_x}(x)) \cup \mathcal{S}_0\,,&\quad
%S_{\ell'}^\delta\cap B_{r_x}(x)=(B_{r'}(x')\cap B_{r_x}(x))\cup \mathcal{S}'\,, \\ \notag
%G^\delta \cap B_{r_x}(x)=&B_{r_x}(x) \setminus (S_{\ell_0}^\delta \cup S_{\ell'}^\delta )\,.
%\end{align}
\end{enumerate}
\end{theorem}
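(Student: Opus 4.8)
The plan is to follow the three-step structure of the proof of Theorem~\ref{interior resolution theorem delta positive}, the only genuinely new ingredient being the presence of $\partial B$, which — somewhat surprisingly — makes the local combinatorics \emph{simpler}, not harder, so that there is no boundary analogue of case (iv) of that theorem.

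\textbf{Reducing to jump points.} If $x$ is not a jump point of $h$, then $x$ lies in the relative interior of some arc $a_i^{\ell_0}$, so for $r_x$ small the region $B_{r_x}(x)\cap B$ is contained in the circular segment $C_i^{\ell_0}$ and hence, by \eqref{circular segment containment} of Theorem~\ref{existence theorem}, in $(S_{\ell_0}^\delta)^{(1)}$: this is case (i), and it is incompatible with (ii)--(iii). From now on assume $x$ is a jump point of $h$; then the jump is between $\ell_0$ and some $\ell_1\neq\ell_0$, and since $C_j^{\ell_1}\subset(S_{\ell_1}^\delta)^{(1)}$ with $x\in\partial C_j^{\ell_1}$ while $S_{\ell_1}^\delta\subset B$, we also have $x\in\partial S_{\ell_0}^\delta\cap\partial S_{\ell_1}^\delta$. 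By Theorem~\ref{positive delta classification theorem}, every blow-up of $\S^\delta$ at $x$ is of type (iii) or (iv) there; after translating $x$ to the origin and rotating so that the inner normal of $\partial B$ at $x$ is $-e_2$, for the chosen blow-up sequence $r_j\downarrow 0$ we get $S_0^\delta/r_j\to\{y:y\cdot e_2>0\}$ in $L^1_{\loc}$, while $S_{\ell_0}^\delta/r_j$ and $S_{\ell_1}^\delta/r_j$ converge to the two open sectors into which a ray $R$ from the origin lying in $\{y\cdot e_2<0\}$ splits $\{y\cdot e_2<0\}$, and $S_\ell^\delta/r_j\to\emptyset$ for $\ell\notin\{0,\ell_0,\ell_1\}$.

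\textbf{Graphicality inside a wedge.} Fix this sequence and let $W$ be the open wedge with vertex $x$ cut out near $x$ by the two chords $c_i^{\ell_0}$ and $c_j^{\ell_1}$. By \eqref{circular segment containment}, near $x$ the cluster coincides with $S_{\ell_0}^\delta$ outside $W$ on the $\ell_0$-side and with $S_{\ell_1}^\delta$ outside $W$ on the $\ell_1$-side, so every interface and all of $G^\delta$ near $x$ lie in $\overline{W}$; crucially, the sides of $W$ bounding this region are \emph{straight} segments, not circular arcs. One then runs the box-and-symmetrization argument of Step~1 of Theorem~\ref{interior resolution theorem delta positive} verbatim inside $W$: the density estimates of Corollary~\ref{density corollary}, the $L^1_{\loc}$-convergence above, weak-$*$ convergence of the boundary measures, the slicing Lemma~\ref{slicing lemma}, the comparison inequality \eqref{lambda r nought inequality} of Lemma~\ref{lambda r nought lemma}, and the equality case of Lemma~\ref{symmetrization lemma} together produce, for $j$ large, a rectangle $r_jQ'$ with $x$ in its closure, with axes adapted to $R$ and with its side nearest $x$ possibly dipping into $C_i^{\ell_0}\cup C_j^{\ell_1}$ (harmless, since those lie in $S_{\ell_0}^\delta\cup S_{\ell_1}^\delta$), on which $S_{\ell_0}^\delta$ and $S_{\ell_1}^\delta$ are, respectively, a ``lower'' graph and an ``upper'' graph over a common axis. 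Because $\mathcal{A}_\delta^h$ imposes no area constraint on the chambers $S_\ell$, the first variation forces the \emph{dry} interface $\partial S_{\ell_0}^\delta\cap\partial S_{\ell_1}^\delta$ to have zero curvature, i.e.\ to be a line segment (this is what case (ii) records), whereas the \emph{wet} interfaces $\partial S_\ell^\delta\cap\partial G^\delta$ inherit constant curvatures $\kappa_\ell^\delta$ satisfying \eqref{curvature condition} from the multiplier of $|G^\delta|\le\delta$.

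\textbf{The interval argument and conclusion.} As in Step~2 of Theorem~\ref{interior resolution theorem delta positive}, set $\mathcal{I}:=\{t:(G^\delta\cap r_jQ')_t=\emptyset\}$, the slices being taken orthogonally to $R$; using Corollary~\ref{no termination corollary} to exclude vertical tangents of $\partial G^\delta$ and the fact that two distinct circular arcs cannot be tangent at two common points, one shows $\mathcal{I}$ is a closed interval. The decisive difference from the interior case is that the slice parameter now ranges over a \emph{one-sided} interval $[0,L]$, with $0$ corresponding to the vertex $x$: the interfaces emanate from $x\in\partial B$ and run only into $B$, and the slice through $x$ meets $B$ (hence $G^\delta$) in at most the single point $x$, so $0\in\mathcal{I}$ automatically. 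Thus $\mathcal{I}=[0,a]$ for some $a\in[0,L]$. If $a>0$, then $G^\delta\cap B_{r_x}(x)=\emptyset$ for any $r_x<a$, the two graphs coincide there, and their common graph — a dry interface, hence a line segment — separates $B_{r_x}(x)\cap B$ as in case (ii). If $a=0$, then $G^\delta$ has positive density immediately off $x$, and the graph of $\partial S_{\ell_0}^\delta$ (resp.\ $\partial S_{\ell_1}^\delta$) is the circle arc $a_1=\partial S_{\ell_0}^\delta\cap\partial G^\delta$ (resp.\ $a_2=\partial S_{\ell_1}^\delta\cap\partial G^\delta$), meeting the other only at $x$, so $\partial S_{\ell_0}^\delta\cap\partial S_{\ell_1}^\delta\cap B_{r_x}(x)=\{x\}$, which is case (iii). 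In particular there is no boundary analogue of case (iv) of Theorem~\ref{interior resolution theorem delta positive}: that configuration needs $x$ to lie simultaneously on a dry interface and at the tip of a wet cusp, impossible once the wet region can sit only on the $B$-side of $\partial B$. The main obstacle is the same as in the interior case, namely producing the two-ordered-graphs structure via the symmetrization lemma; the only extra care required is to perform the rectangle construction inside the straight-sided wedge $W$, so that the curvature of $\partial B$ never enters.
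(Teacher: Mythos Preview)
Your overall strategy matches the paper's: dispose of non-jump points via \eqref{circular segment containment}, then at jump points establish graphicality by the symmetrization argument and finish with the interval $\mathcal I$. The substantive difference is that you align $Q'$ with the blow-up interface direction $R$, whereas the paper aligns it with $\partial B$ (after flattening $\partial B$ to its tangent line at $x$). This choice of orientation introduces two gaps.

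First, your description of the blow-up as ``the two open sectors into which a ray $R$\dots splits $\{y\cdot e_2<0\}$'' covers only case~(iii) of Theorem~\ref{positive delta classification theorem}; in case~(iv) the limit $G$ is a nonempty sector bounded by \emph{two} distinct rays, so there is no single $R$ to adapt the axes to. You acknowledge both cases can occur but then proceed as though only~(iii) does.

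Second, your justification for $0\in\mathcal I$ is incorrect. If slices are taken orthogonally to $R$ and $R$ is not radial, the line through $x$ perpendicular to $R$ is \emph{not} tangent to $\partial B$ and therefore meets $B$ in a genuine chord, not only at $x$; the sentence ``the slice through $x$ meets $B$ (hence $G^\delta$) in at most the single point $x$'' is false in general. A correct argument in your coordinates would instead use $x\in\partial S_{\ell_0}^\delta\cap\partial S_{\ell_1}^\delta$ to force both graphs to pass through $x$, hence to coincide at $t=0$ --- but that is not what you wrote, and it presupposes that the trace on the $t=0$ side of $Q'$ already has the interval form required by Lemma~\ref{symmetrization lemma}, which in your orientation is not guaranteed either (that side cuts across $C_i^{\ell_0}$, $W$, $C_j^{\ell_1}$ and possibly $B^c$ at angles depending on $R$).

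The paper's choice to align with $\partial B$ resolves both issues simultaneously: the top and bottom of $Q'$ lie in the circular segments by~\eqref{circular segment containment} \emph{regardless} of which blow-up case occurs, so the input to Lemma~\ref{symmetrization lemma} is secured without reference to $R$; and the side of $Q'$ at $x$ is along the tangent to $\partial B$, so its trace is exactly $S_{\ell_0}^\delta$ on one half and $S_{\ell_1}^\delta$ on the other by the jump of $h$, and $G^\delta\subset B$ is genuinely absent there.
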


\begin{proof}
    Let us assume for simplicity that $x=\vec{e}_1$. The proof at any other point in $\partial B$ is the same.
\par
\medskip
\noindent\textit{Step zero}: If $\vec{e}_1\in \partial B$ is not a jump point of $h$, then by the inclusion \eqref{circular segment containment} from Theorem \ref{existence theorem}, $(i)$ holds. 
\par
\medskip
\noindent\textit{Step one}: For the rest of the proof, we assume that $\vec{e}_1$ is a jump point of $h$. By Theorem \ref{positive delta classification theorem}, there exists $\ell'\neq \ell_0$ such that any blow-up at $\vec{e}_1$ consists of the blow-up chambers $S_{\ell_0}$, $S_{\ell'}$, each of which is the intersection of a halfspace with $\{y:y\cdot \vec{e}_1<0\}$, $S_0=\{y:y\cdot \vec{e}_1>0\}$, and $G = \mathbb{R}^2\setminus (S_0\cup S_{\ell_0}\cup S_{\ell'})$ is a possibly empty connected cone contained in $\{y:y\cdot \vec{e}_1<0\}$. In this step we argue that on a small rectangle $Q'$ with $0\in \partial Q'$, $( S_{\ell_0}-\vec{e}_1)/r_j \cap Q'$ and $( S_{\ell'}^\delta - \vec{e}_1)/r_j \cap Q'$ are the hypograph and epigraph, respectively of two functions over $\{y\cdot \vec{e}_1=0\}$.
\par
Let us choose $r_j \to 0$ such that by Theorem \ref{positive delta classification theorem}, we have a blow-up limit belonging to $\mathcal{A}_{\vec{e}_1}$. By the density estimates \eqref{area density equation}, $B_{r_j}(x) \subset S_{\ell_0}^\delta \cup S_{\ell'}^\delta \cup G^\delta \cup S_0$ for all large enough $j$, so we can ignore the other chambers. Also, for convenience, by the containment \eqref{circular segment containment} of the circular segments in $S_{\ell_0}^\delta$ and $S_{\ell'}^\delta$ from Theorem \ref{existence theorem}, we extend $S_{\ell_0}^\delta$ and $S_{\ell'}^\delta$ on $\{y:y\cdot \vec{e}_1<1\}$ so that for all large $j$,
$$
\{y:y\cdot \vec{e}_1 = 1\}\cap B_{r_j}(\vec{e}_1) \subset  \partial S_{\ell_0}^\delta \cup \partial S_{\ell'}^\delta
$$
rather than
$$
\partial B\cap B_{r_j}(\vec{e}_1) \subset  \partial S_{\ell_0}^\delta \cup \partial S_{\ell'}^\delta\,;
$$
this allows us to work on a rectangle along the sequence of blow-ups rather than $(B - \vec{e}_1)/r_j$. Now due to the inclusion \eqref{circular segment containment} from Theorem \ref{existence theorem}, there exists a rectangle
\begin{align}\notag
    Q = [T,0]\times [-1,1]
\end{align}
such that for all large $j$, up to interchanging the labels $\ell_0$ and $\ell'$, in the trace sense, 
\begin{align}\notag
    (\{T\}\times [-1, -1/2]) \cup ([T,0]\times \{-1\}) \cup (\{0\}\times [-1,0]) &\subset  ( S_{\ell_0}^\delta - \vec{e}_1)/r_j\,, \\ \notag
    (\{T\}\times [1/2, 1]) \cup ([T,0]\times \{1\}) \cup (\{0\}\times [0,1]) &\subset  ( S_{\ell'}^\delta - \vec{e}_1)/r_j\,.
\end{align}
Then a similar slicing argument as leading to \eqref{left and right traces} implies that for some large $j$, there exist $-1/2\leq a_1\leq a_2 \leq 1/2$ and $t\in [T, 0)$ such that, in the trace sense, 
\begin{align}\notag
    (\{t\}\times [-1, a_1]) \cup ([t,0]\times \{-1\}) \cup (\{0\}\times [-1,0]) &=  ( S_{\ell_0}^\delta - \vec{e}_1)/r_j \\ \notag
    (\{t\}\times [a_2, 1]) \cup ([t,0]\times \{1\}) \cup (\{0\}\times [0,1]) &=  ( S_{\ell'}^\delta - \vec{e}_1)/r_j\,.
\end{align}
Given this explicit description on the boundary of $Q':=[t,0]\times [-1,1]$, the same argument as in the proof of Theorem \ref{interior resolution theorem delta positive} gives claim of this step.
\par
\medskip
\noindent\textit{Step two}: We may finally finish the proof of Theorem \ref{boundary resolution theorem for positive delta}. By the same argument as in the previous theorem, the set
\begin{align}\notag
    \mathcal{I}:=\{s\in [r_jt,1]: (G^{\delta} \cap (r_jQ'+\vec{e}_1))_s=\emptyset \}
\end{align}
is a closed interval. Furthermore, since $\vec{e}_1$ is a jump point of $h$, $\mathcal{I}$ contains $0$. If $\mathrm{int}\, I \neq \emptyset$, we immediately see that $(ii)$ holds. On the other hand, if $\mathcal{I}=\{0\}$, then the vertical slices of $G^\delta$ are non-empty for all $s\in (r_jt,1)$. Again the same argument as in the previous theorem shows that $(iii)$ holds. 
\end{proof}

\begin{proof}[Proof of Theorem \ref{main regularity theorem delta positive}]
At any $x\in \cl B$, Theorems \ref{interior resolution theorem delta positive} and \ref{boundary resolution theorem for positive delta} yield the existence of $r_x>0$ such that either $x$ is an interior point of $S_\ell^\delta$ or $G^\delta$ or on $B_{r_x}(x)$, the minimizer is described by one of the options listed in those theorems. By the enumeration of possible local resolutions in those theorems, we see that $\partial S_\ell^\delta \cap B$ is $C^{1,1}$ as desired, since it is analytic except where two arcs of constant curvature intersect tangentially. Now if $x$ and $y$ are both in $\partial G^\delta \cap \partial S_\ell^\delta$ for some $\ell\geq 1$, then one of Theorem \ref{interior resolution theorem delta positive}.$(i)$, $(iii)$, or $(iv)$ or Theorem \ref{boundary resolution theorem for positive delta}.$(iii)$ holds on $B_{r_x}(x)$ and $B_{r_y}(y)$; in particular, each $\partial G^\delta \cap \partial S_\ell^\delta \cap B_{r_x}(x)$ and $\partial G^\delta \cap \partial S_\ell^\delta \cap B_{r_y}(y)$ is an arc of constant curvature. A first variation argument then gives \eqref{curvature condition} if $G^\delta \neq \emptyset$. Also, by the compactness of $\cl B$ and the interior resolution theorem, there are only finitely many arcs in $\partial G^\delta \cap \partial S_\ell^\delta$. We note that $H_{S_\ell^\delta}$ cannot be negative along $\partial^* S_\ell^\delta \cap \partial^* G^\delta$, since local variations which decrease the area of $G^\delta$ are admissible. A similar argument based on the interior local resolution result implies that if $\mathcal{H}^1(\partial S_\ell \cap \partial S_m)>0$ for $\ell,m\geq 1$, then $\partial S_{\ell}^\delta \cap \partial S_{m}^\delta$ is composed of finitely many straight line segments. We have thus decomposed each such $\partial S_\ell^\delta \cap \partial S_m^\delta$ and $\partial G^\delta \cap \partial S_\ell^\delta$ into finitely many line segments and arcs of constant curvature, respectively.
\par
Moving on to showing that each connected component, say $C$, of $S_\ell^\delta$ for $1\leq \ell \leq N$ is convex, consider any $x\in \partial C$. $C \cap B_{r_x}(x)$ must be convex by Theorems \ref{interior resolution theorem delta positive} and \ref{boundary resolution theorem for positive delta} and $H_{S_\ell^\delta}\geq 0$ along $\partial^* S_\ell^\delta \cap \partial^* G^\delta$.  Since $\partial C$ consists of a finite number of segments and circular arcs and $C$ is connected, the convexity of $C$ follows from this local convexity. To finish proving the theorem, it remains to determine the ways in which these line segments and arcs may terminate. We note that each component of $\partial G^\delta$ must terminate. If one did not, then by Corollary \ref{no termination corollary}, it forms a circle contained in $\partial S_\ell^\delta \cap \partial G^\delta$. This configuration cannot be minimal however, since that component of $G^\delta$ may be added to $S_\ell^\delta$ to decrease the energy. Suppose that one of these components terminates at $x$. Next, by applying the local resolution at $x$, either Theorem \ref{interior resolution theorem delta positive}.$(iv)$ holds if $x\in B$ or item $(ii)$ or $(iii)$ from Theorem \ref{boundary resolution theorem for positive delta} holds, where $x\in \partial B$ is a jump point of $h$. This yields the desired conclusion.\end{proof}

\begin{proof}[Proof of Theorem \ref{main regularity theorem delta positive all of space}]
    The proof is similar to the proof of Theorem \ref{main regularity theorem delta positive}. Since every interface point is an interior interface point, determining the ways in which arcs may terminate proceeds as in the case $x\in B$ in that theorem.
\end{proof}

\section{Proof of Theorem \ref{main regularity theorem delta zero}}\label{sec:proof of delta 0 theorem}

\begin{proof}[Proof of Theorem \ref{main regularity theorem delta zero}]
    \textit{Step one}: First, we show that the set $\Sigma$ of interior triple junctions, or more precisely the set
\begin{align}\notag
    \Sigma := \{x\in B : \textup{$\exists$ a blow-up at $x$ given by $(ii)$ from Theorem \ref{zero delta classification theorem}}\}
\end{align}
does not have any accumulation points in $\cl B$. Suppose for contradiction that $\{x_k\}$ is a sequence of such points accumulating at $x\in \cl B$. By restricting to a subsequence and relabeling the chambers, we can assume that the three chambers in the blow-ups at each $x_k$ are $S^0_\ell$ for $1\leq \ell \leq 3$. In both cases $x\in B$ and $x\in \partial B$, the argument is similar (and follows classical arguments, e.g. \cite[Theorem 30.7]{Mag12}), so we consider only the case where $x\in \partial B$. If $\{x_k\}\subset \Sigma$ and $x_k \to x\in \partial B$, then by \eqref{circular segment containment}, $x\in \partial B$ is a jump point of $h$. We claim that up to a subsequence which we do not notate,
\begin{align}\label{local L1 blowup conv}
    \frac{S_\ell^0-x}{|x-x_k|} \to S_{\ell}\quad\textup{ locally in $L^1$ for $\ell=1,2,3$}
\end{align}
for a blow-up cluster $\S$ of the form from item $(v)$ in Theorem \ref{zero delta classification theorem}. To see this, we first note that by our assumption on $x_k$,
\begin{align}\label{in all three boundaries}
    x\in \partial S_1^0 \cap \partial S_2^0 \cap \partial S_3^0\,.
\end{align}
This inclusion rules out item $(iv)$ from Theorem \ref{zero delta classification theorem}, and so the blow-up cluster is three connected cones partitioning $\{y:y\cdot x<0\}$. Up to a further subsequence, we may assume that
\begin{align}\notag
    \frac{x_k - x}{|x_k - x|}\to \nu \in \{y:y\cdot x < 0\}\,,
\end{align}
where we have used \eqref{circular segment containment} to preclude the possibility that $x_k$ approaches $x$ tangentially. Now for some $r>0$, $B_r(\nu)$, and $\ell_0\in \{1,2,3\}$, say $\ell_0=1$, the description of the blow-up cluster implies that $B_r(\nu) \subset S_2\cup S_3$. Combined with the $L^1$ convergence \eqref{local L1 blowup conv} and the infiltration lemma, we conclude that $B_{|x_k-x|r/4}\subset S_2^0 \cup S_3^0$ for large enough $k$, which is in direct conflict with $x_k \in \Sigma$. We have thus proven that $\Sigma$ has no accumulation points in $\cl B$; in particular, it is finite.
\par
\medskip
\noindent\textit{Step two}: We finally conclude the proof of Theorem \ref{main regularity theorem delta zero}. For any $x\in (B\setminus \Sigma) \cap \partial S_{\ell_0}^0$, Theorem \ref{zero delta classification theorem} and the infiltration lemma imply that $x\in \partial^* S_{\ell_0}^0 \cap \partial^* S_{\ell_1}^0$. In turn, by Corollary \ref{reduced boundary regularity delta 0}, there exists $r_x>0$ such that $B_{r_x}(x) \cap \partial S_{\ell_0}^0$ is a diameter of $B_{r_x}(x)$. Recalling from Corollary \ref{density corollary} that $\mathcal{H}^1(\partial S_{\ell_0}^0 \setminus \partial^* S_{\ell_0}^0)=0$, we may thus decompose $\partial S_{\ell_0}$ as a countable number of line segments, each of which must terminate at a point in the finite set $\Sigma$ or a jump point of $h$.  Therefore, $\partial S_{\ell_0}^\delta$ is a finite number of line segments. The remainder of Theorem \ref{main regularity theorem delta zero} now follows directly from this fact and the classification of blow-ups in Theorem \ref{zero delta classification theorem}, items $(ii)$, $(iv)$, and $(v)$. Indeed, since the interfaces are a finite number of line segments, at $x\in \Sigma$ or $x\in \partial B$ which is a jump point of $h$, the blow-up is unique, and the minimal partition $\S^0$ must coincide with the blow-up on a neighborhood of $x$. The convexity of connected components of $S_\ell^0$ for $1\leq \ell \leq N$ follows as in the $\delta>0$ case.
\end{proof}

%\begin{remark}[``No islands" when $N=3$]
%When $N=3$ and $\S^\delta$ is a minimizer for $\delta \geq 0$, any connected component $C$ of $S_\ell^\delta$ for $\ell>0$  also satisfies $\mathcal{H}^1(\partial C \cap \partial B)>0$. If there were such a component $C$ of say $S_1^\delta$, donating $C$ to the chamber $S_2^\delta$ or $S_3^\delta$ possessing shorter common boundary with $C$ gives a contradiction of the minimality of $\S^\delta$.
%\end{remark}

\section{Resolution for small $\delta$ on the ball}\label{sec:resolution for small delta}

\begin{proof}[Proof of Theorem \ref{resolution for small delta corollary}]
\textit{Step zero}: We begin by reducing the statement of the theorem to one phrased in terms of a sequence of minimizers $\{\S^{\delta_j}\}$. More precisely, to prove Theorem \ref{resolution for small delta corollary}, we claim it is enough to consider a sequence $\{\S^{\delta_j}\}$ of minimizers for $\delta_j\to 0$ and show that up to a subsequence, there exists a minimizer $\S^0$ among $\mathcal{A}_{0}^h$ with singular set $\Sigma$ such that 
\begin{align}\label{hausdorff convergence of chambers in proof}
    %\hd (S_\ell^{\delta_j}, S_\ell )
\max \big\{ \sup_{x\in S_\ell^{\delta_j}} \dist (x,S_\ell^0)\,,\,\sup_{x\in S_\ell^{0}} \dist (x,S_\ell^{\delta_j}) \big\} &\to 0\quad \textit{for }1\leq \ell \leq N \\ 
 \label{hausdorff convergence of remnants in proof}
    %\hd (S_\ell^{\delta_j}, S_\ell )
\max \big\{ \sup_{x\in G^{\delta_j}} \dist (x,\Sigma)\,,\,\sup_{x\in \Sigma} \dist (x,G^{\delta_j}) \big\} &\to 0 \,,
\end{align}
and, for large enough $j$ and each $x\in \Sigma$, $B_{r}(x) \cap \partial G^{\delta_j}$ consists of three circle arcs of curvature $\kappa_{j}$, with total area $|G^{\delta_j}|=\delta_j$. To see why this is sufficient, if Theorem \ref{resolution for small delta corollary} were false, then there would be some sequence $\delta_j\to 0$ with minimizers $\S^{\delta_j}$ among $\mathcal{A}_{\delta_j}^h$ such that for any subsequence and choice of minimizer $\S^0$ among $\mathcal{A}_0^h$, at least one of \eqref{hausdorff convergence of chambers in corollary}-\eqref{hausdorff convergence of remnants in corollary} or the asymptotic resolution near singularities of $\S^0$ did not hold. %For example, suppose that
%\begin{align}\notag
%  \inf_{\S^0\textup{ minimal among $\mathcal{A}_0^h$}}  \max \big\{ \sup_{x\in S_\ell^{\delta_j}} \dist (x,S_\ell^0)\,,\,\sup_{x\in S_\ell^{0}} \dist (x,S_\ell^{\delta_j}) \big\} \geq \varepsilon>0
%\end{align}
%for some positive $\varepsilon$.
But this would contradict the subsequential claim above. 
%The estimate \eqref{hausdorff convergence of remnants in corollary} is proved similarly via \eqref{hausdorff convergence of remnants in proof}, as is the asymptotic resolution of $\partial G^\delta$ near singular points.
\par
We point out that if we knew that $\partial G^\delta$ is described near singularities by three circle arcs for small $\delta$, the saturation of the area inequality $|G^\delta|\leq \delta$ follows from the facts that $\partial G^\delta$ has negative mean curvature away from its cusps and increasing the area of $G^\delta$ is admissible if $|G^\delta|<\delta$. Therefore, the rest of the proof is divided into steps proving \eqref{hausdorff convergence of chambers in proof}-\eqref{hausdorff convergence of remnants in proof} and the asymptotic resolution near singular points. First we prove that due to $c_\ell=1$ for $1\leq \ell \leq N$, there are no ``islands" inside $B$. Second, we extract a minimizer $\S^0$ for $\mathcal{A}_0^h$ from a minimizing (sub-)sequence $\S^{\delta_j}$ with $\delta_j \to 0$ and prove \eqref{hausdorff convergence of chambers in proof}. There are then two cases. In the first, we suppose that the set of triple junctions $\Sigma$ is empty and show that $G^{\delta_j}=\emptyset$ for large $j$, so that \eqref{hausdorff convergence of remnants in proof} is trivial. In the other case, we assume that $\Sigma \neq \emptyset$ and prove \eqref{hausdorff convergence of remnants in proof} and the final resolution near singularities of the limiting cluster.
\par
\medskip
\noindent\textit{Step one}: Let $\S^\delta$ be a minimizer for $\delta>0$. We claim that for any connected component $C$ of any chamber $S_\ell^\delta$ with $1\leq \ell \leq N$, $\partial C \cap \{h=\ell\}\neq \emptyset$. Suppose that this were not the case for some $C\subset S_\ell$. Then in fact, $\cl C \subset B$, since by Theorem \ref{boundary resolution theorem for positive delta} and \eqref{circular segment containment}, the only components that can intersect $\partial B$ are those bordering $\partial B$ along an arc where $h=\ell$. By Theorem \ref{interior resolution theorem delta positive}, $\partial C$ is $C^{1,1}$ since its boundary is contained in $B$. If $\mathcal{H}^1(\partial C \cap \partial S_{\ell'}^\delta)>0$ for some $\ell'$, then since all $c_\ell$ are equal, removing $C$ from $S_\ell^\delta$ and adding it to $S_{\ell'}^\delta$ contradicts the minimality of $\S^\delta$. So it must be the case that $\partial C \subset \partial G^\delta$ except for possibly finitely many points. We translate $C$ if necessary until it intersects $\partial G^\delta \cap \partial C'$ for a connected component $C'\neq C$ of some $S_{\ell'}^\delta$ at $y\in B$, which does not increase the energy. Creating a new minimal cluster $\tilde{\S}$ by adding $C$ to $S_{\ell'}^\delta$ and removing it from $S_\ell^\delta$ gives a contradiction. This is because by Corollary \ref{density corollary}, $y\in (\tilde{S}_{\ell'}^\delta)^{(1)}$ implies that $y\in\mathrm{int}\,(\tilde{S}_{\ell'}^\delta)^{(1)}$, and so $\mathcal{F}(\tilde{\S};B_r(y))=0$ for some $r>0$, against the minimality of $\S^\delta$.

%We translate $C$ until $\partial C$ hits some component $C'\subset \partial S_{\ell'}^\delta$ for the first time at $y$. By \eqref{circular segment containment} and the regularity of $\partial C$, $y\notin \partial B$, and so is a point of regularity for both $C$ and $C'$. Now since the coefficients are all equal, we may donate $C$ to $S_{\ell'}^\delta$, creating a new minimal cluster $\S'$ with $\mathcal{F}(S')=\mathcal{F}(\S)$. But then $y$ is a point of density $1$ for $S_{\ell'}^\delta$, so $\mathcal{F}(\S';B_r(x))=0$ for some $r>0$, contradicting the minimality of $\S$.
\par
We note that as a consequence, the total number of connected components in $\S^\delta$ is bounded in terms of the number of jumps of $h$, and in addition the area of any connected component is bounded from below by the area of the smallest circular segment from \eqref{circular segments}.
\par
\medskip
\noindent\textit{Step two}: Here we identify our subsequence, limiting minimizer among $\mathcal{A}_0^h$, and prove \eqref{hausdorff convergence of chambers in proof}. Let us decompose each $S_{\ell}^{\delta_j}$ into its open connected components
\begin{align}
    S_\ell^{\delta_j} = \cup_{i=1}^{N_\ell^{j}} C_i^{\ell,j}\,,
\end{align}
where by the previous step, $N_\ell^j\leq N_\ell(h)$ for all $j$ and $|C_i^{\ell,j}|\geq C(h)$ for all $j$ and $i$. Up to a subsequence which we do not notate, we may assume therefore that for each $1\leq \ell\leq N$,
\begin{align}\label{number and area bound equation}
    N_\ell^j = M_\ell\leq N_\ell(h)\quad \textup{and}\quad |C_i^{\ell,j}|\geq C(h)\quad  \forall j \quad\textup{and} \quad i\in \{1,\dots, M_\ell\}\,.
\end{align}
Since 
\begin{align}\label{ordering of infimums}
\min_{\mathcal{A}_{\delta_j}^h}\mathcal{F} \leq \min_{\mathcal{A}_{0}^h}\mathcal{F}\quad \forall j\,,
\end{align}
up to a further subsequence, the compactness for sets of finite perimeter and \eqref{number and area bound equation} yield a partition $\{C_i^\ell\}_{\ell,i}$ of $B$, with no trivial elements thanks to \eqref{number and area bound equation}, such that 
\begin{align}\label{almost everywhere convergence}
    \mathbf{1}_{C_i^{\ell,j}} &\to \mathbf{1}_{C_i^\ell}\quad\textup{a.e. }\quad\textup{and} \\ \label{lsc delta to 0}
    \liminf_{j\to \infty}\mathcal{F}(\S^{\delta_j};B)= \liminf_{j\to \infty}&\sum_{\ell=1}^N\sum_{i=1}^{M_\ell}  P(C_i^{\ell,j};B) \geq \sum_{\ell=1}^N\sum_{i=1}^{M_\ell}  P(C_i^{\ell};B)\quad \forall 1\leq \ell\leq N.
\end{align}
Actually, by Lemma \ref{convex sets convergence lemma}, we may assume that each $\cl\, C_i^\ell$ is compact and convex, $C_i^\ell$ is open, and, for each $1\leq \ell\leq N$,
\begin{align}\label{hausdorff convergence of components delta to 0}
    %\hd (S_\ell^{\delta_j}, S_\ell )
\max \big\{ \sup_{x\in C_i^\ell} \dist (x,C_{i}^{\ell,j})\,,\,\sup_{x\in C_i^{\ell,j}} \dist (x,C_i^\ell) \big\} \to 0 \quad \forall 1\leq i \leq M_\ell\,.
\end{align}
We claim that the cluster
\begin{align}\notag
\S^0=(\mathbb{R}^2\setminus B,S_1^0,\dots,S_N^0, \emptyset)=\Big(\mathbb{R}^2\setminus B,\bigcup_{i=1}^{M_1}C_i^1,\dots,\bigcup_{i=1}^{M_N}C_i^N,\emptyset\Big)
\end{align}
of $B$ is minimal for $\mathcal{F}$ on $\mathcal{A}_0^h$. It belongs to $\mathcal{A}_0^h$ by the inclusion \eqref{circular segment containment} for each $j$ and by $\delta_j\to 0$. For minimality, we use \eqref{ordering of infimums} and \eqref{lsc delta to 0} to write
\begin{align}\label{minimality of thing}
    \min_{\S\in\mathcal{A}_0^h}\mathcal{F}(\S;B) \geq   \sum_{\ell=1}^N\liminf_{j\to \infty}\sum_{i=1}^{M_\ell}  P(C_i^{\ell,j};B) \geq \sum_{\ell=1}^N\sum_{i=1}^{M_\ell}  P(C_i^{\ell};B) \geq \sum_{\ell=1}^N  P(S_\ell^0;B)\,.
\end{align}
This proves $\S^0$ is minimal. The Hausdorff convergence \eqref{hausdorff convergence of chambers in proof} follows from \eqref{hausdorff convergence of components delta to 0}. 
\par
We note that by the minimality of $\S^0$, \eqref{minimality of thing} must be an equality, so that in turn
\begin{align}\label{perimeter equality}
    \sum_{i=1}^{M_\ell} P(C_i^{\ell};B) =  P(S_\ell^0;B)\quad\forall 1\leq \ell\leq N\,.
\end{align}
Now each $C_i^\ell$ is open and convex; in particular, they are all indecomposable sets of finite perimeter. This indecomposability and \eqref{perimeter equality} allow us to conclude from \cite[Theorem 1]{AMMN01} that $\{C_i^\ell\}_{i}$ is the unique decomposition of $S_\ell^0$ into pairwise disjoint indecomposable sets such that \eqref{perimeter equality} holds. Also, by Theorem \ref{main regularity theorem delta zero}, each $(S_\ell^0)^{(1)}$ is an open set whose boundary is smooth away from finitely many points. By \cite[Theorem 2]{AMMN01}, which states that for an open set with $\mathcal{H}^1$-equivalent topological and measure theoretic boundaries (e.g.\ $(S_\ell^0)^{(1)}$) the decompositions into open connected components and maximal indecomposable components coincide, we conclude that the connected components of $(S_\ell^0)^{(1)}$ are $\{C_i^\ell\}_{i=1}^{M_\ell}$, and $S_\ell^0=(S_\ell^0)^{(1)}$. We have in fact shown in \eqref{hausdorff convergence of components delta to 0} that the individual connected components of $S_\ell^{\delta_j}$ converge in the Hausdorff sense to the connected components of $S_\ell^0$ for each $\ell$.
\par
\medskip
\noindent\textit{Step three}: In this step, we suppose that $\Sigma=\emptyset$ and show that $G^{\delta_j}=\emptyset$ for large $j$, which finishes the proof in this case. If $\Sigma=\emptyset$, then every component of $\partial S_\ell^0 \cap \partial S_{\ell'}^0$ is a segment which, by Theorem \ref{main regularity theorem delta zero}, can only terminate at a pair of jump points of $h$ which are not boundary triple junctions. Therefore, every connected component of a chamber $S_\ell^0$ is the convex hull of some finite number of arcs on $\partial B$ contained in $\{h=\ell\}$. Now for large $j$, by the Hausdorff convergence in step two and the containment \eqref{circular segment containment}, given any connected component $C$ of a chamber of $\S^{\delta_j}$ there exists connected component $C'$ of a chamber of $\S^0$ such that $\partial C \cap \partial B = \partial C' \cap \partial B$. Since every connected component of every chamber is convex for $\delta \geq 0$, we see that in fact it must be $C=C'$. So the minimal partition $\S^{\delta_j}$ coincides with $\S^0$ for all large $j$ when there are no triple junctions of $\S^0$.
\par
\medskip
\noindent\textit{Step four}: For the rest of the proof, we assume that $\Sigma \neq \emptyset$. In this step, we show that 
\begin{align}\label{diverging curvatures}
G^{\delta_j}\neq \emptyset \quad\textup{for all $j$}\quad\textup{and}\quad  \kappa_{j}\to \infty\,.
\end{align}
Assume for contradiction that $G^{\delta_j}= \emptyset$ for some $j$. Then $\S^{\delta_j}$ is minimal for $\mathcal{F}$ among $\mathcal{A}_0^h$, so $\mathcal{F}(\S^{\delta_j})= \mathcal{F}(\S^0)$ and $\S^0$ is minimal among $\mathcal{A}_h^{\delta_j}$, too. But this is impossible, since $\Sigma\neq \emptyset$ and Theorem \ref{main regularity theorem delta positive} precludes the presence of interior or boundary triple junctions for minimizers when $\delta>0$. Moving on to showing that $ \kappa_{j}\to \infty$, we fix $y\in \Sigma$. Let us assume that $y\in \partial B$ is a jump point of $h$ between $h=1$ and $h=2$ with $S_3^0$ being the third chamber in the triple junction, since the case when $y\in B$ is easier. For all $j$, by the containment \eqref{circular segment containment} of the neighboring circular segments in $S_1^{\delta_j}$ and $S_2^{\delta_j}$, there exists $r>0$ such that for all $j$ and $3\leq \ell \leq N$, $\partial S_\ell^{\delta_j} \cap B_r(y) \subset B$ for some small $r$. In particular, $\partial S_3^{\delta_j} \cap B_r(y)$ is $C^{1,1}$ by Theorem \ref{main regularity theorem delta positive}. Furthermore, since $S_3^{\delta_j}$ converges as $j\to \infty$ to a set with a corner in $B_r(y)$, the $C^{1,1}$ norms of $\partial S_3^{\delta_j}$ must be blowing up on that ball. These norms are controlled in terms of $\kappa_j$, and so $\kappa_{j}\to \infty$. 
\par
\medskip
\noindent\textit{Step five}: In the next two steps, we prove \eqref{hausdorff convergence of remnants in proof}. Here we show that 
\begin{align}\label{first half of hausdorff convergence}
    \sup_{x\in G^{\delta_j}} \dist (x, \Sigma) \to 0\,.
\end{align}
Suppose for contradiction that \eqref{first half of hausdorff convergence} did not hold. Then, up to a subsequence, we could choose $r>0$ and $y_j \in \cl\, G^{\delta_j}$ such that 
$$
y_j \to y\in \cl B \setminus \cup_{z\in \Sigma}B_r(z)\,.
$$
Let us assume that $y=\vec{e}_1\in \partial B$; we will point out the difference in the $y\in B$ argument when the moment arises. We note that $y$ must be a jump point of $h$, say between $h=1$ and $h=2$, due to \eqref{circular segment containment}. Furthermore, by Theorem \ref{main regularity theorem delta zero} and $y\notin \Sigma$, there exists $r'>0$ such that
$$
B_{r'}(y) \cap B \subset \cl\, S_1^0 \cup \cl\, S_2^0\,.
$$
In particular, $\dist(y,S_\ell^0)>r'/2$ for $3\leq \ell \leq N$. Therefore, due to \eqref{hausdorff convergence of chambers in proof}, $\dist (y, S_\ell^{\delta_j})\geq r'/2$ for large enough $j$. Also by \eqref{circular segment containment} applied to $S_1^{\delta_j}$ and $S_2^{\delta_j}$ and the convexity of connected components of those sets, we may choose small $\varepsilon_1$ and $\varepsilon_2$ such that on the rectangle
\begin{align*}
    R = [1-\varepsilon_1,1]\times [-\varepsilon_2,\varepsilon_2] \subset B_{r'/2}(y)\,,
\end{align*}
$\partial S_1^{\delta_j} \cap R \cap B$ and $\partial S_2^{\delta_j}\cap R \cap B$ are graphs of functions $f_1^j$ and $f_2^j$ over the $\vec{e}_1$-axis for all $j$. Relabeling if necessary, we may take
\begin{align}\notag
    -\varepsilon_2 \leq f_1^j \leq f_2^j \leq \varepsilon_2\quad \textup{and}\quad  (f_1^j)''\leq 0\,,\,\, (f_2^j)'' \geq 0\,.
\end{align}
It is at this point that in the case $y\in  B$, we instead appeal to the Hausdorff convergence \eqref{hausdorff convergence of chambers in proof} and the convexity of the components of $S_\ell^{\delta_j}$ to conclude that graphicality holds. Now the set
\begin{align}\notag
    \mathcal{I}_j=\{t\in [1-\varepsilon_1,1]:f_1^j=f_2^j\}
\end{align}
is a non-empty interval by the convexity of connected components of the chambers and the fact that $f_1^j(1)=0=f_2^j(1)$. In addition, for each $i=1,2$ and large $j$,
\begin{align}\notag
  \textup{$f_i^j([1-\varepsilon_1,1]\setminus \mathcal{I}_j )$ is a graph of constant curvature $\kappa_{j}$}
\end{align}
since $f_1^j<f_2^j$ implies that $(t,f_i^j(t))\in \partial G^{\delta_j}$. Since a graph of constant curvature $\kappa_j$ can be defined over an interval of length at most $2\kappa_j^{-1}$ and
$\kappa_j\to \infty$, we deduce that $\mathcal{H}^1(\mathcal{I}_j)\to \varepsilon_1$. Since $1\in \mathcal{I}_j$ for all $j$ and $G_j \cap \mathrm{int}\,\mathcal{I}_j \times [-\varepsilon_2,\varepsilon_2]=\emptyset$, we conclude that $G^{\delta_j}$ stays at positive distance from $y=\vec{e}_1$, which is a contradiction. We have thus proved \eqref{first half of hausdorff convergence}.
\par
\medskip
\noindent\textit{Step six}: In this step, we prove the other half of \eqref{hausdorff convergence of remnants in proof}, namely
\begin{align}\label{second half of hausdorff convergence}
    \sup_{x\in \Sigma} \dist (x, G^{\delta_j}) \to 0\,.
\end{align}
For such an $x$, say which is a triple junction between $S_1^0$, $S_2^0$, and $S_3^0$, by \eqref{hausdorff convergence of chambers in proof} and the definition of $\Sigma$, there exists $r_0>0$ such that given $r<r_0$, there exists $J(r)$ such that
\begin{align}\label{ball sees all three}
B_r(x) \cap S_\ell^{\delta_j}\neq \emptyset\quad\textup{ for $\ell=1,2,3$ and $j\geq J(r)$}\,.
\end{align}
Furthermore, by decreasing $r_0$ if necessary when $x\in \partial B \cap \Sigma$ is a jump point of $h$, the boundary condition \eqref{trace constraint} and absence of triple junctions for $\delta>0$ allow us to choose $1\leq \ell\leq 3$ such that
\begin{align}\label{no boundary on boundary 2}
\partial S_\ell^\delta \cap \partial B \cap  B_{r_0}(x)=\emptyset\quad\textup{for all $j$}\,.
\end{align}
Now \eqref{ball sees all three} and \eqref{no boundary on boundary 2} imply that $\partial S_\ell^{\delta_j} \cap B_r(x) \subset B$ and is also non-empty for $j\geq J(r)$. Since Theorem \ref{main regularity theorem delta positive} implies that line segments in $\partial S_\ell^{\delta_j}$ can only terminate inside $B$ at interior cusp points in $\partial G^\delta$ and $S_\ell^{\delta_j}\cap B_{r}(x)$ converges to a sector with angle strictly less than $\pi$, we find that $G^{\delta_j}\cap B_r(x)\neq \emptyset$ for all $j\geq J(r)$. Letting $r\to 0$ gives \eqref{second half of hausdorff convergence}.
\par
\medskip
\noindent\textit{Step seven}: Finally, under the assumption that $\Sigma=\{x_1,\dots, x_P\}\neq \emptyset$, we show that for large enough $j$, $G^{\delta_j}$ consists of $P$ connected components, each of which is determined by three circle arcs contained in $\partial S_{\ell_i}^{\delta_j}\cap \partial G^{\delta_j}$ for the three indices $\ell_i$, $i=1,2,3$, in the triple junction at $x$. We fix $x\in \Sigma$ which is a triple junction between the first three chambers, so there is some $B_{2r}(x)$ such that for each $\ell$, $B_{2r}(x)\cap S_\ell^0$ consists of exactly one connected component $C_\ell$ of $S_\ell^0$ for $1\leq \ell \leq 3$ (also $S_\ell^0 \cap B_{2r}(x)=\emptyset$ for $\ell\geq 4$). Up to decreasing $r$, we may also assume that
\begin{align}\label{no other sigma}
   ( \Sigma\setminus \{x\} )\cap \cl B_{2r}(x)=\emptyset\,.
\end{align}
Recalling from step two (see \eqref{hausdorff convergence of components delta to 0} and the last paragraph) that the connected components of $S_\ell^{\delta_j}$ converge in the Hausdorff sense to those of $S_\ell^0$, for $j$ large enough, we must have
\begin{align}\label{one component in the ball}
    B_r(x) \cap S_\ell^{\delta_j} = B_r(x) \cap C_\ell^j \neq \emptyset\quad 1\leq \ell \leq 3
\end{align}
for a single connected component $C_\ell^j$, and, due to \eqref{hausdorff convergence of remnants in proof} and \eqref{no other sigma},
\begin{align}\label{G only near x}
    \cl G^{\delta_j} \cap \cl B_r(x) \subset B_{r/4}(x)\,.
\end{align}
Now $\partial G_{\delta_j} \cap B_r(x)$ consists of finitely many circle arcs and has negative mean curvature (with respect to the outward normal $\nu_{G^{\delta_j}}$) along these arcs away from cusps. We claim that for $j$ large, there are precisely three such arcs, one bordering each $S_\ell^{\delta_j}$ for $1\leq \ell \leq 3$ and together bounding one connected component of $G^{\delta_j}$. There must be at least three arcs, since an open set bounded by two circle arcs has corners rather than cusps. To finish the proof, it suffices to show that there cannot be more than two distinct arcs belonging to $\partial G^{\delta_j} \cap \partial S_\ell^{\delta_j} \cap B_{r/4}(x)$ for a single $\ell\in \{1,2,3\}$. If there were, then $\partial S_\ell^{\delta_j}\cap B_r(x)$ would contain at least three distinct segments, because with only two, each of which has one endpoint outside of $B_{r}(x)$ according to \eqref{one component in the ball}-\eqref{G only near x}, one cannot resolve three cusp points as dictated by Theorem \ref{main regularity theorem delta positive}. As a consequence, there exists $\ell'\neq \ell$ such that up to a subsequence, for large $j$, there are two distinct segments, $L_1$ and $L_2$, both belonging $\partial S_\ell^{\delta_j} \cap \partial S_{\ell'}^{\delta_j} \cap B_r(x)$ and separated by at least one circle arc. It is therefore the case that $L_1$ and $L_2$ are not collinear. Also by \eqref{one component in the ball}, there is only a single convex component $C_{\ell'}^{j}$ of $S_{\ell'}^{\delta_j}$ containing $S_{\ell'}^{\delta_j}\cap B_r(x)$. Therefore, $L_1 \cup L_2\subset \partial C_{\ell}^j \cap \partial C_{\ell'}^j$. But this is impossible: since a planar convex set lies on one side of any tangent line, $\partial C_{\ell}^j$ and $\partial C_{\ell'}^j$ cannot share two non-collinear segments.\end{proof}

\begin{remark}[Explicit description of $\S^\delta$]\label{remark:explicit description remark}
    If in the conclusion of Theorem \ref{resolution for small delta corollary}, it is also the case that $\dist(\Sigma,\partial B)>f(\delta)$, so that $G^\delta \subset\!\subset B$, then $\S^\delta=(B^c,S_1'\setminus G^\delta,\dots,S_N'\setminus G^\delta,G^\delta)$ for some $\S'=(B^c,S'_1,\dots,S_N',\emptyset)$ minimizing $\mathcal{F}$ among $\mathcal{A}_0^h$. To see this, we ``excise" the wet region $G^\delta$ from $\S^\delta$. For each $x\in \Sigma$, divide $G^\delta\cap B_{f(\delta)}(x)$ into three pieces, each bounded by an arc $A_x^\ell\subset\partial G^\delta \cap  \partial S_\ell^\delta$ for some $\ell$ and the two segments $B_x^\ell,C_x^\ell$ connecting the endpoints of $A_x^\ell$ to the centroid of $G^\delta \cap B_{f(\delta)}(x)$. Since all $A_x^\ell$ have equal lengths/curvatures (by $G^\delta \subset\!\subset B$ and the fact that there is only one configuration up to isometries of three mutually tangent circles with radius $r$), all the pieces of $G^\delta$ are congruent, with area $A_\delta = \delta/(3\mathcal{H}^0(\Sigma))$ such that $\mathcal{H}^1(A_x^\ell)-\mathcal{H}^1(B_x^\ell\cup C_x^\ell)=c\sqrt{A_\delta}$ for a constant $c<0$. For each $\ell$, define $G_\ell^\delta$ as the union of all pieces of $G^\delta$ with a boundary arc in $\partial S_\ell^\delta \cap \partial G^\delta$. Let $\S'=(B^c,S_1^\delta \cup G_1^\delta, \dots, S_N^\delta \cup G_N^\delta,\emptyset)$. By the definition of $\S'$ and minimality of $\S^0$,
\begin{align}\notag
   \mathcal{F}(\S^\delta;B)-3\mathcal{H}^0(\Sigma)c\sqrt{A_\delta}   =  \mathcal{F}(\S';B)\geq \mathcal{F}(\S^0;B).
\end{align}
Since this lower bound for the minimum of $\mathcal{F}$ on $\mathcal{A}_\delta^h$ is achieved by the construction wetting the singularities of $\S^0$ as in Figure \ref{perturbation figure}, it follows that $\mathcal{F}(\S';B)=\mathcal{F}(\S^0;B)$ and $\S'$ is minimizing for $\mathcal{F}$.% among $\mathcal{A}_0^h$. By the definition of $\S'$, we have $\S^\delta=(B^c,S_1'\setminus G^\delta,\dots,S_N'\setminus G^\delta,G^\delta)$.
\end{remark}

\noindent{\bf Data Availability:} Data sharing not applicable to this article as no datasets were generated or analyzed during the current study.

\bibliographystyle{alpha}
\bibliography{references}

\newcommand{\etalchar}[1]{$^{#1}$}
\begin{thebibliography}{ACMM01}

\bibitem[ACMM01]{AMMN01}
Luigi Ambrosio, Vicent Caselles, Simon Masnou, and Jean-Michel Morel.
\newblock Connected components of sets of finite perimeter and applications to
  image processing.
\newblock {\em J. Eur. Math. Soc. (JEMS)}, 3(1):39--92, 2001.

\bibitem[Alm76]{Alm76}
Frederick~J. Almgren, Jr.
\newblock Existence and regularity almost everywhere of solutions to elliptic
  variational problems with constraints.
\newblock {\em Mem. Amer. Math. Soc.}, 4(165):viii+199, 1976.

\bibitem[BM98]{BrakkeMorgan}
Kenneth Brakke and Frank Morgan.
\newblock Instability of the wet {$X$} soap film.
\newblock {\em J. Geom. Anal.}, 8(5):749--767, 1998.

\bibitem[Bra05]{Brakke05}
Kenneth Brakke.
\newblock Instability of the wet cube cone soap film.
\newblock {\em Colloids and Surfaces A: Physicochemical and Engineering
  Aspects}, 263(1):4--10, 2005.
\newblock A collection of papers presented at the 5th European Conference on
  Foams, Emulsions, and Applications, EUFOAM 2004, University of
  Marne-la-Vallee, Champs sur Marne (France), 5-8 July, 2004.

\bibitem[Cha95]{Chan}
Claire~C. Chan.
\newblock {\em Structure of the singular set in energy-minimizing partitions
  and area-minimizing surfaces in $ {\bf R}^N$}.
\newblock ProQuest LLC, Ann Arbor, MI, 1995.
\newblock Thesis (Ph.D.)--Stanford University.

\bibitem[DLSS17]{DeLSpaSpo17}
Camillo De~Lellis, Emanuele Spadaro, and Luca Spolaor.
\newblock Uniqueness of tangent cones for two-dimensional almost-minimizing
  currents.
\newblock {\em Comm. Pure Appl. Math.}, 70(7):1402--1421, 2017.

\bibitem[ESS23]{SS}
\'{E}tienne Sandier and Peter Sternberg.
\newblock Allen-cahn solutions with triple junction structure at infinity,
  2023.

\bibitem[Fed69]{Fed69}
Herbert Federer.
\newblock {\em Geometric measure theory}.
\newblock Die Grundlehren der mathematischen Wissenschaften, Band 153.
  Springer-Verlag New York, Inc., New York, 1969.

\bibitem[FFLM22]{FFLM}
Irene Fonseca, Nicola Fusco, Giovanni Leoni, and Massimiliano Morini.
\newblock Global and local energy minimizers for a nanowire growth model.
\newblock {\em Ann. Inst. H. Poincar\'{e} Anal. Non Lin\'{e}aire}, 2022.

\bibitem[FGM{\etalchar{+}}00]{FGMMNPY}
David Futer, Andrei Gnepp, David McMath, Brian~A. Munson, Ting Ng, Sang-Hyoun
  Pahk, and Cara Yoder.
\newblock Cost-minimizing networks among immiscible fluids in {${\bf R}^2$}.
\newblock {\em Pacific J. Math.}, 196(2):395--414, 2000.

\bibitem[Fus04]{Fus04}
Nicola Fusco.
\newblock The classical isoperimetric theorem.
\newblock {\em Rend. Accad. Sci. Fis. Mat. Napoli (4)}, 71:63--107, 2004.

\bibitem[Giu84]{Giu84}
Enrico Giusti.
\newblock {\em Minimal surfaces and functions of bounded variation}, volume~80
  of {\em Monographs in Mathematics}.
\newblock Birkh\"{a}user Verlag, Basel, 1984.

\bibitem[HM96]{MH}
Joel Hass and Frank Morgan.
\newblock Geodesics and soap bubbles in surfaces.
\newblock {\em Math. Z.}, 223(2):185--196, 1996.

\bibitem[Hut97]{Hutzler}
Stefan Hutzler.
\newblock {\em The Physics of Foams}.
\newblock 1997.
\newblock Thesis (Ph.D.)--Trinity College, University of Dublin.

\bibitem[KMS21]{KMS2}
Darren King, Francesco Maggi, and Salvatore Stuvard.
\newblock Collapsing and the convex hull property in a soap film capillarity
  model.
\newblock {\em Ann. Inst. H. Poincar\'{e} C Anal. Non Lin\'{e}aire},
  38(6):1929--1941, 2021.

\bibitem[KMS22a]{KinMagStu22}
Darren King, Francesco Maggi, and Salvatore Stuvard.
\newblock Plateau's problem as a singular limit of capillarity problems
  (revised).
\newblock {\em Comm. Pure Appl. Math.}, 75(5):895--969, 2022.

\bibitem[KMS22b]{KMS3}
Darren King, Francesco Maggi, and Salvatore Stuvard.
\newblock Smoothness of collapsed regions in a capillarity model for soap
  films.
\newblock {\em Arch. Ration. Mech. Anal.}, 243(2):459--500, 2022.

\bibitem[Leo01]{Leo01}
Gian~Paolo Leonardi.
\newblock Infiltrations in immiscible fluids systems.
\newblock {\em Proc. Roy. Soc. Edinburgh Sect. A}, 131(2):425--436, 2001.

\bibitem[Mag12]{Mag12}
Francesco Maggi.
\newblock {\em Sets of finite perimeter and geometric variational problems},
  volume 135 of {\em Cambridge Studies in Advanced Mathematics}.
\newblock Cambridge University Press, Cambridge, 2012.
\newblock An introduction to geometric measure theory.

\bibitem[Mor94]{Morgan94}
Frank Morgan.
\newblock Soap bubbles in {${\bf R}^2$} and in surfaces.
\newblock {\em Pacific J. Math.}, 165(2):347--361, 1994.

\bibitem[Mor98]{Morgan98}
Frank Morgan.
\newblock Immiscible fluid clusters in {${\bf R}^2$} and {${\bf R}^3$}.
\newblock {\em Michigan Math. J.}, 45(3):441--450, 1998.

\bibitem[Mor09]{MorganBook}
Frank Morgan.
\newblock {\em Geometric measure theory}.
\newblock Elsevier/Academic Press, Amsterdam, fourth edition, 2009.
\newblock A beginner's guide.

\bibitem[MSS19]{MSS}
Francesco Maggi, Salvatore Stuvard, and Antonello Scardicchio.
\newblock Soap films with gravity and almost-minimal surfaces.
\newblock {\em Discrete Contin. Dyn. Syst.}, 39(12):6877--6912, 2019.

\bibitem[Tay76]{Taylor}
Jean~E. Taylor.
\newblock The structure of singularities in solutions to ellipsoidal
  variational problems with constraints in {${\rm R}^{3}$}.
\newblock {\em Ann. of Math. (2)}, 103(3):541--546, 1976.

\bibitem[Whi86]{White85}
Brian White.
\newblock Regularity of the singular sets in immiscible fluid interfaces and
  solutions to other {P}lateau-type problems.
\newblock In {\em Miniconference on geometry and partial differential equations
  ({C}anberra, 1985)}, volume~10 of {\em Proc. Centre Math. Anal. Austral. Nat.
  Univ.}, pages 244--249. Austral. Nat. Univ., Canberra, 1986.

\bibitem[Whi96]{White}
Brian White.
\newblock Existence of least-energy configurations of immiscible fluids.
\newblock {\em J. Geom. Anal.}, 6(1):151--161, 1996.

\bibitem[WP96]{WeairePhelan}
Denis Weaire and Robert Phelan.
\newblock Vertex instabilities in foams and emulsions.
\newblock {\em Journal of Physics: Condensed Matter}, 8(3):L37, jan 1996.

\end{thebibliography}
\end{document}